\newlength{\fighskip} \fighskip=2pt
\newlength{\figvskip} \figvskip=3pt
\numberwithin{equation}{section}
\newcommand{\C}{\mathbb{C}}
\newcommand{\R}{\mathbb{R}}
\newcommand{\g}{\mathfrak{g}}
\newcommand{\W}{\mathcal{W}}
\DeclareMathOperator{\End}{End}
\DeclareMathOperator{\Sym}{Sym}
\newcommand{\A}{\mathcal A}
\renewcommand{\W}{\mathcal W}
\theoremstyle{plain}
\newtheorem{thm}{Theorem}[section]
\newtheorem{thm-defn}{Theorem/Definition}[section]
\newtheorem{lem}[thm]{Lemma}
\newtheorem{lem-defn}[thm]{Lemma/Definition}
\newtheorem{prop}[thm]{Proposition}
\newtheorem{cor}[thm]{Corollary}
\theoremstyle{definition}
\newtheorem{defn}[thm]{Definition}
\newtheorem{notn}[thm]{Notation}
\newtheorem{eg}[thm]{Example}
\theoremstyle{remark}
\newtheorem{rmk}[thm]{Remark}
\begin{document}
\title[Quantizable functions on K\"ahler manifolds and non-formal quantization]{Quantizable functions on K\"ahler manifolds\\
	and non-formal quantization}

\author{Kwokwai Chan}
\address{Department of Mathematics, The Chinese University of Hong Kong, Shatin, Hong Kong}
\email{kwchan@math.cuhk.edu.hk}

\author{Naichung Conan Leung}
\address{The Institute of Mathematical Sciences and Department of Mathematics, The Chinese University of Hong Kong, Shatin, Hong Kong}
\email{leung@math.cuhk.edu.hk}

\author{Qin Li}
\address{Shenzhen Institute for Quantum Science and Engineering, Southern University of Science and Technology, Shenzhen, China}
\email{liqin@sustech.edu.cn}

\subjclass[2010]{53D55 (58J20, 81T15, 81Q30)}
\keywords{deformation quantization, geometric quantization, differential operator, K\"ahler manifold}

\begin{abstract}
Applying the Fedosov connections constructed in \cite{CLL}, we find a (dense) subsheaf of smooth functions on a K\"ahler manifold $X$ which admits a non-formal deformation quantization. 
When $X$ is prequantizable and the Fedosov connection satisfies an integrality condition, we prove that this subsheaf of functions can be quantized to a sheaf of twisted differential operators (TDO), which is isomorphic to that associated to the prequantum line bundle. We also show that examples of such quantizable functions are given by images of quantum moment maps.


\end{abstract}

\maketitle

\tableofcontents

\section{Introduction}

The quantization of the phase space $(X,\omega)$ of a classical mechanical system is the procedure of associating functions in a dense subspace  $\mathcal{A}\subset C^\infty(X)$ to operators on a Hilbert space $\mathcal{H}$ such that the composition gives a deformation of the classical pointwise multiplication.  The most important two schemes of quantization in mathematics are deformation quantization and geometric quantization, which focus on different aspects of the quantization picture. Deformation quantization is by definition a formal deformation $(C^\infty(X)[[\hbar]],\star)$ of the commutative algebra $C^\infty(X)$, where $\hbar$ is a formal variable, such that the first term of the commutator is precisely the Poisson bracket; while geometric quantization focuses on the Hilbert space $\mathcal{H}$ and its operators.

In general, it is too optimistic to expect that the Hilbert space in geometric quantization is really a module over the deformation quantization algebra. We take the Berezin-Toeplitz quantization as an example (\cites{Ma-Ma-1,Ma-Ma, Bordemann-Meinrenken, Karabegov96, Karabegov00, Karabegov, Karabegov07}): a smooth function $f$ acts on the Hilbert spaces $\mathcal{H}_k := H^0(X,L^{\otimes k})$ of holomorphic sections of tensor powers of the prequantum line bundle via Toeplitz operators. However, the composition of Toeplitz operators is only \emph{asymptotic} to a sum of Toeplitz operators as $k\rightarrow \infty$. In particular, each Hilbert space $\mathcal{H}_k$ for a fixed $k$ does not form a module over the deformation quantization algebra. This is also one of the reasons for using the formal variable $\hbar$, instead of complex values, in Berezin-Toeplitz quantization. 

These drawbacks led to some dissatisfaction among physicists. They even claimed that ``deformation quantization is not quantization'' (see \cite[Section 1.4]{GW} for a more detailed explanation of this comment). To solve this problem, Gukov and Witten \cite{GW} proposed a new scheme of quantization by considering the $A$-model of a suitable complexification of a symplectic manifold $X$. In this \emph{brane quantization} picture, the Hilbert space and the algebra of operators acting on it are both morphism spaces between certain branes, whose definitions are still mysterious to both physicists and mathematicians. We also do not know in general which symplectic manifolds admit such a quantization (see, however, \cite{GW, Bischoff-Gualtieri}).


In this paper, we give a mathematical construction of \emph{non-formal} quantizations of K\"ahler manifolds. The naive idea is to take the evaluation of $\hbar$ in deformation quantization to some complex numbers to get rid of the formal variable. But there would be convergence issues in general. To overcome this, we exploit the Fedosov connections constructed in our previous work \cite{CLL}, which have nice finiteness properties. We also restrict to a subspace of smooth functions because the star product $f\star g$ of two functions is a formal power series in $\hbar$, which is in general divergent after evaluating $\hbar$ at complex values. We will be able to show that this subspace of functions can be quantized to holomorphic differential operators acting on the Hilbert space $\mathcal{H}_k = H^0(X,L^{\otimes k})$ in geometric quantization. This yields a dense subspace of \emph{quantizable functions} as $k\to \infty$.

To have a glimpse of the idea, consider the flat space $\C^n$ equipped with the Wick product on $C^\infty(\C^n)[[\hbar]]$, a natural choice of the subalgebra of $C^\infty(\C^n)$ is the space of polynomials where the formal variable can be evaluated at any complex number. This is because the Wick product of any two polynomials is still a polynomial in $\hbar$ (see equation \eqref{equation: fiberwise-Wick-product} for an explicit formula of Wick product).
On a general K\"ahler manifold $(X, \omega, J)$, we use Fedosov's flat connections \cite{Fed} to globalize the local computations and find a subspace of functions whose noncommutativity under the star product is polynomially controlled. 

The key lies in the fact that the Fedosov connections on K\"ahler manifolds constructed in \cite{CLL} are quantizations of Kapranov's $L_\infty$ structure \cite{Kapranov}. The special form of these Fedosov connections $D_\alpha$ allows us to take the evaluation $\hbar=1/k$ for any $k\in\C\setminus\{0\}$, yielding a \emph{non-formal} flat connection $D_{\alpha,k}$; here $\alpha$ (called the Karabegov form) is a $(1,1)$-form representing a class in $\hbar H^2_{dR}(X)[\hbar]$.
We call $k$ the \emph{level} and define \emph{quantizable functions of level $k$} as those functions whose corresponding flat sections under the connection $D_{\alpha,k}$ have only finite polynomial degree anti-holomorphic parts (see Definition \ref{definition: quantizable functions}). In particular, the star product of these functions is still quantizable and has only finite $\hbar$ power expansion, so there are no convergence issues. 
These quantizable functions form a sheaf $\mathcal{C}_{\alpha,k}^\infty$ of algebras on the K\"ahler manifold $X$ under the star product.

In Section \ref{section: TDO}, we show that this gives an example of so-called \emph{sheaves of twisted differential operators} (TDO for short) on $X$, which appeared in the theory of $D$-modules \cite{Ginzburg}:
\begin{thm}[= Theorems \ref{theorem: TDO} + \ref{theorem: characteristic class of TDO}]
	Let $X$ be a K\"ahler manifold. For any closed formal $(1,1)$-form $\alpha\in \hbar\A^{1,1}(X)[[\hbar]]$ and level $k$, the sheaf $\mathcal{C}_{\alpha,k}^\infty$ of quantizable functions (under the Fedosov connection $D_{\alpha,k}$) is a TDO on $X$ with characteristic class $[\omega - \alpha]$. 
\end{thm}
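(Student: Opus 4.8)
The plan is to verify the two defining properties of a TDO directly from the Fedosov-connection description of the sheaf $\mathcal{C}_{\alpha,k}^\infty$, and then compute its characteristic class via Čech cocycles. Recall that a sheaf $\mathcal{D}$ of (non-commutative, filtered) $\mathcal{O}_X$-algebras on the underlying smooth/complex manifold is a TDO if: (i) it carries an exhaustive filtration $\mathcal{D} = \bigcup_{p\ge 0}\mathcal{D}^{\le p}$ with $\mathcal{D}^{\le 0} = \mathcal{O}_X$ (here $C^\infty_X$, or its holomorphic analogue after identifying flat sections with holomorphic data); (ii) the associated graded is identified with $\Sym^\bullet T_X$, i.e. $\mathrm{gr}^1 \cong T_X$ and multiplication induces $\mathrm{gr}^p \cong \Sym^p T_X$; equivalently, locally $\mathcal{D}$ is isomorphic to the usual ring of differential operators, but the gluing is twisted. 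So the first step is to recall from the earlier sections the precise local structure: under $D_{\alpha,k}$, a quantizable function of level $k$ is recovered from a flat section whose anti-holomorphic part has bounded degree, and the star product corresponds to the fiberwise Wick product; the bound on the anti-holomorphic degree gives the filtration $\mathcal{C}_{\alpha,k}^{\infty,\le p}$.

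Next I would establish property (ii) locally. On a Darboux–Kähler chart, the Fedosov connection $D_{\alpha,k}$ has the explicit form coming from the quantization of Kapranov's $L_\infty$-structure, and the flat sections are parametrized by their ``leading'' holomorphic data together with the anti-holomorphic coordinates; the fiberwise Wick product in the normal-coordinate variables $\hat{y}$ is, after $\hbar = 1/k$, a genuine (non-formal) bidifferential operator. I expect that in such a chart $\mathcal{C}_{\alpha,k}^\infty$ is isomorphic as a filtered algebra to $\mathcal{D}iff(L^{\otimes k})$ acting on sections of a local trivialization of $L^{\otimes k}$ — this is essentially the content of Theorem \ref{theorem: TDO}, and the local model identifies $\mathrm{gr}^p$ with $\Sym^p T_X$ because the top-degree-in-$\hat{y}$ part of a Wick product is the symmetric product of symbols. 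The point where care is needed: one must show that the subsheaf cut out by the finite-anti-holomorphic-degree condition is exactly closed under the star product and that the induced filtration is multiplicative, $\mathcal{C}^{\le p}\star\mathcal{C}^{\le q}\subseteq\mathcal{C}^{\le p+q}$; this follows from the degree-counting already set up in Definition \ref{definition: quantizable functions}, but it should be stated cleanly.

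For the characteristic class, the standard fact is that TDOs on $X$ are classified by $H^1(X,\Omega^{1,\mathrm{cl}}_X)$ (or the appropriate Picard-algebroid group), and the class of $\mathcal{D}iff(L)$ is the Atiyah class $c_1(L)$; more generally a TDO built from a line-bundle-like object with ``curvature'' $\omega-\alpha$ has class $[\omega-\alpha]$. Concretely, I would cover $X$ by charts on which $\mathcal{C}_{\alpha,k}^\infty$ is isomorphic to the local differential-operator model, compare two such isomorphisms on overlaps, and read off the transition $1$-cocycle valued in closed $1$-forms; the Fedosov gluing data — which encodes $\alpha$ and the Kähler form $\omega$ via the Karabegov form — produces exactly a cocycle representing $[\omega-\alpha]$ (after evaluating $\hbar=1/k$, the $\hbar$-adic class $\hbar H^2_{dR}$ becomes the honest class $[\omega-\alpha]$; one checks the normalization against the $\C^n$/Wick computation where $\alpha=0$ and the class is $[\omega]=c_1(L)$).

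The main obstacle I anticipate is bookkeeping the two filtrations/gradings simultaneously: the ``anti-holomorphic degree'' filtration coming from Definition \ref{definition: quantizable functions} versus the order filtration of a TDO, and checking they coincide after the Fedosov identification — in particular proving that $\mathrm{gr}^\bullet\mathcal{C}_{\alpha,k}^\infty \cong \Sym^\bullet T_X$ rather than something larger. Everything else (flatness of $D_{\alpha,k}$, closedness of the Karabegov form, the local Wick formula) is available from \cite{CLL} and the earlier parts of this paper, so the proof is mostly a matter of organizing these inputs and then performing the Čech comparison for the characteristic-class statement.
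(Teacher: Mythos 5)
Your scaffolding matches the paper's (filtration by anti-holomorphic degree, multiplicativity of the filtration, commutativity of the associated graded, identification of $gr$ with $\Sym^\bullet\mathcal{T}X$, then a \v{C}ech comparison of local splittings to get the class), but there is a genuine gap at exactly the point you flag as ``bookkeeping'': you never construct a single quantizable function of positive anti-holomorphic degree, and without such sections you cannot prove surjectivity of the map $gr^N\mathcal{C}^\infty_{\alpha,k}\to\Sym^N\mathcal{T}X$ --- a priori the sheaf could consist of little more than holomorphic functions, in which case $gr$ would be smaller than $\Sym^\bullet\mathcal{T}X$ and $\mathcal{C}^\infty_{\alpha,k}$ would not be a TDO. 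The paper's essential input here is Proposition \ref{proposition: function-u-k}: the locally defined formal functions $u_j=\partial_{z^j}\bigl(\rho-\sum_i\hbar^i\rho_i\bigr)$, built from potentials of $\omega$ and of $\alpha$, whose flat sections have anti-holomorphic degree exactly one with purely anti-holomorphic part $u_j+\omega_{j\bar m}\bar y^m$. Evaluating at $\hbar=1/k$, these generate the associated graded over $\mathcal{O}_X$ (Proposition \ref{proposition: associated-graded-of-quantizable-functions}), and the explicit bracket $[\omega_{j\bar l}\bar y^l,\frac{\partial f}{\partial z^m}y^m]_\star=\frac{\partial f}{\partial z^j}$ is what verifies that $\psi$ is an isomorphism of \emph{Poisson} algebras --- a condition in the definition of TDO that your proposal does not address at all. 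This existence statement is not degree-counting; it is the non-trivial construction at the heart of the theorem.

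The gap propagates into your characteristic-class argument. Your proposed local model ``$\mathcal{C}^\infty_{\alpha,k}\cong\mathcal{D}iff(L^{\otimes k})$ on a chart'' cannot be invoked here: that identification is the later Theorem \ref{theorem: almost-holomorphic-function-differential-operators}, proved only for the Berezin--Toeplitz choice of $\alpha$ on a prequantizable $X$, and its proof itself relies on the $u_i$'s; for arbitrary closed $(1,1)$-forms $\alpha$ there is no line bundle around. What the paper actually does is cover $X$ by balls, fix a potential $\rho_i$ on each $U_i$, and use the flat sections $O\bigl(\partial\rho_i/\partial z^j\bigr)$ to define the local splittings $\mathcal{O}_X(U_i)\oplus\mathcal{T}_X(U_i)\cong\mathcal{D}_1(U_i)$; comparing these on overlaps produces the cocycle $\alpha_{ij}=\partial(\rho_i-\rho_j)$ of $\partial$-closed $(1,0)$-forms, whose Dolbeault representative $\bar\partial\partial\rho_i$ is the Karabegov form, giving $[\omega-\alpha]$. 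Without the degree-one sections there is no splitting to compare, so the cocycle you hope to ``read off from the Fedosov gluing data'' has no concrete definition in your argument.
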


Note that the Karabegov form of the Fedosov connection $D_\alpha$ is precisely given by $\frac{1}{\hbar}(\omega - \alpha)$.
When it satisfies an integrality condition (see equation \eqref{equation: integrality condition}), we can prove that the sheaf $\mathcal{C}_{\alpha,k}^\infty$ is isomorphic to the sheaf of holomorphic differential operators on some holomorphic line bundles. 

A particularly important case is when the Karabegov form is the same as that in the \emph{Berezin-Toeplitz quantization} (when the closed formal $(1,1)$-form $\alpha$ is suitably chosen). In this case, the line bundles are tensor powers of the prequantum line bundle $L$. Applying the extension of Fedosov's method in our previous work \cite{CLL3} allows us to construct a \emph{level $k$ Bargmann-Fock sheaf} $\mathcal{F}_{L^{\otimes k}}$ of modules over the Weyl bundle $\W_{X,\C}$ equipped with a compatible Fedosov flat connection $D_{\alpha,k}$, for every positive integer $k$. Then we have the following result analogous to the ono-to-one correspondence \cite{Fed} between smooth functions and flat sections of the Weyl bundle in Fedosov quantization:
\begin{thm}[= Theorem \ref{theorem: Bargmann-Fock-isomorphic-prequantum}]
Suppose that $X$ is a K\"ahler manifold equipped with a prequantum line bundle $L$. Choose the closed formal $(1,1)$-form $\alpha$ so that the Karabegov form coincides with that of the Berezin-Toeplitz quantization of $X$.
Then for any positive integer $k$, the symbol map gives a canonical sheaf isomorphism from the sheaf of flat sections of the Bargmann-Fock sheaf $\mathcal{F}_{L^{\otimes k}}$ under its Fedosov connection $D_{\alpha,k}$ to the sheaf of holomorphic sections of the $k$-th tensor power $L^{\otimes k}$. 
\end{thm}

Now the compatibility between the Fedosov connections on the Weyl bundle $\W_{X,\C}$ and $\mathcal{F}_{L^{\otimes k}}$ implies that quantizable functions of level $k$ act on the (local) holomorphic sections of $L^{\otimes k}$. Since locality is obvious, this gives the desired non-formal quantization described by holomorphic differential operators on the Hilbert space $H^0(X,L^{\otimes k})$:
\begin{thm}[= Theorem \ref{theorem: almost-holomorphic-function-differential-operators}]\label{theorem: main-intro}
Suppose that $X$ is a K\"ahler manifold equipped with a prequantum line bundle $L$. Choose the closed formal $(1,1)$-form $\alpha$ so that the Karabegov form coincides with that of the Berezin-Toeplitz quantization of $X$.
Then for any positive integer $k$, there is a natural isomorphism (of TDOs)
$$
 \varphi: \mathcal{C}_{\alpha,k}^\infty\rightarrow \mathcal{D}(L^{\otimes k})
$$
from the sheaf of algebras of level $k$ quantizable functions to the sheaf of holomorphic differential operators on $L^{\otimes k}$. 
\end{thm}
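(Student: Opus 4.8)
The plan is to assemble the isomorphism $\varphi$ from the two structural results quoted just above, namely the identification of $\mathcal{C}_{\alpha,k}^\infty$ with the sheaf of holomorphic differential operators on \emph{some} holomorphic line bundle (via the TDO classification, using that the characteristic class is $[\omega-\alpha]$ by the first quoted theorem), together with the identification of that line bundle with $L^{\otimes k}$ once the Karabegov form is chosen to match the Berezin-Toeplitz one. Concretely, I would first recall from Section \ref{section: TDO} that $\mathcal{C}_{\alpha,k}^\infty$ is a TDO with characteristic class $[\omega-\alpha]$; under the Berezin-Toeplitz normalization the Karabegov form is $\tfrac1\hbar(\omega-\alpha)$ with $\omega-\alpha$ representing $c_1(L)$ (up to the factor $k$ absorbed in the level), so the characteristic class of $\mathcal{C}_{\alpha,k}^\infty$ equals that of $\mathcal{D}(L^{\otimes k})$. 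Since TDOs on $X$ are classified up to isomorphism by their characteristic class in $H^1(X,\Omega^1_{\mathrm{cl},X})$ (or equivalently $H^2$ via the truncated de Rham complex), this already gives an abstract isomorphism; the work is to produce a \emph{canonical} one compatible with all structures.

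The canonical map comes from the Fedosov picture. By Theorem \ref{theorem: Bargmann-Fock-isomorphic-prequantum}, the symbol map identifies flat sections of $\mathcal{F}_{L^{\otimes k}}$ with holomorphic sections of $L^{\otimes k}$, and the Fedosov connection on $\mathcal{F}_{L^{\otimes k}}$ is compatible with the one on $\W_{X,\C}$ in the sense that the module structure is flat. So a level-$k$ quantizable function $f$, regarded (via its flat lift) as a section of $\W_{X,\C}$, acts fiberwise on $\mathcal{F}_{L^{\otimes k}}$, preserves flat sections (because the action commutes with the connections), and hence descends through the symbol isomorphism to an operator on local holomorphic sections of $L^{\otimes k}$. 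I would check that this operator is a holomorphic differential operator by examining the local normal form: in Fedosov/Darboux coordinates the fiberwise action of a quantizable function involves only finitely many $\partial/\partial \bar z$-type contractions (this is exactly the finiteness built into Definition \ref{definition: quantizable functions}), so the resulting operator on holomorphic sections is polynomial in the holomorphic derivatives, i.e.\ lies in $\mathcal{D}(L^{\otimes k})$. This defines $\varphi$.

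Next I would verify that $\varphi$ is a morphism of TDOs: it is $\OO_X$-linear on the left, respects the unit (the constant function $1$ maps to the identity operator), intertwines the filtrations (a degree count: the number of holomorphic contractions equals the differential order), and on the order-$\le 1$ part recovers the standard symbol/principal-symbol short exact sequence $0\to\OO_X\to \mathcal{C}^{\infty,\le1}_{\alpha,k}\to \mathcal{T}_X\to 0$ matching that of $\mathcal{D}^{\le1}(L^{\otimes k})$. Algebra homomorphy — $\varphi(f\star g)=\varphi(f)\varphi(g)$ — follows because both sides are realized as the fiberwise (associative) product on $\W_{X,\C}$ acting on $\mathcal{F}_{L^{\otimes k}}$, which is where the compatibility of the two Fedosov connections is doing the real work; here I use that $\star$ has no convergence issue on quantizable functions (finite $\hbar$-expansion) so the evaluation $\hbar=1/k$ is legitimate. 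Finally, bijectivity: injectivity is clear since $\varphi(f)=0$ forces the flat lift of $f$ to annihilate all holomorphic sections, hence to vanish; surjectivity follows either from a dimension/filtration count degree by degree (at order $m$ both sheaves are locally free of the same rank, namely $\mathrm{Sym}^{\le m}$ of the holomorphic tangent bundle), or by invoking the TDO classification together with the fact that $\varphi$ is already a filtered homomorphism inducing an isomorphism on associated gradeds.

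The main obstacle I anticipate is making the compatibility between the two Fedosov connections (on $\W_{X,\C}$ and on $\mathcal{F}_{L^{\otimes k}}$) precise enough to conclude both that quantizable functions preserve flat sections and that the induced action is genuinely differential of the expected order — in other words, controlling the fiberwise action of a quantizable function on the Bargmann-Fock-type module $\mathcal{F}_{L^{\otimes k}}$ and showing the ``finite anti-holomorphic degree'' condition of Definition \ref{definition: quantizable functions} translates exactly into ``finite differential order on $L^{\otimes k}$''. Once that local dictionary is in place, the global statement and the TDO-morphism properties are essentially bookkeeping, with the characteristic-class matching (and hence well-definedness of the comparison with the \emph{specific} bundle $L^{\otimes k}$ rather than an abstract TDO) supplied by the two previously quoted theorems.
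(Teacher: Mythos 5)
Your construction of the map $\varphi$ is exactly the paper's: a level $k$ quantizable function, viewed as a flat section of $\W_{X,\C}$ under $D_{\alpha,k}$, acts through the fiberwise Bargmann--Fock action $\circledast_k$ on $\mathcal{F}_{L^{\otimes k}}$; compatibility of the two Fedosov connections (Proposition \ref{lemma: compatibility-Fedosov-connections}) shows flat sections are preserved, and Theorem \ref{theorem: Bargmann-Fock-isomorphic-prequantum} converts this into an action on local holomorphic sections of $L^{\otimes k}$, with the filtration statement coming from the fact that $\bar{y}^j$ acts by a multiple of $\omega^{i\bar{j}}\partial_{y^i}$. Up to that point you are on the paper's track.

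The genuine gap is bijectivity, which is the bulk of the paper's proof and which you do not establish. For injectivity, ``$\varphi(f)=0$ forces the flat lift to annihilate all holomorphic sections, hence to vanish'' is not clear: $\varphi(f)=0$ only says $O_f$ kills the \emph{flat} sections of $\mathcal{F}_{L^{\otimes k}}$, which locally are all of the form $O_g\cdot\bigl(e^{k\cdot\beta}\otimes e_{L^k}\bigr)$ with $g$ holomorphic and do not span the fibers, so faithfulness of the fiberwise Bargmann--Fock representation cannot be invoked; the paper needs an induction on the anti-holomorphic degree of $O_f$, using that $[O_f,O_g]_{\star_k}$ both drops that degree and is killed by $\varphi$, to force $O_f\in\W_X$ and then $f=0$. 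For surjectivity, your ``rank count'' or ``TDO classification plus isomorphism on associated gradeds'' presupposes exactly what has to be computed, namely that $gr\,\varphi$ is an isomorphism (an abstract equality of characteristic classes gives only a non-canonical isomorphism and says nothing about the specific $\varphi$). In the paper this is the key computation: for the functions $u_i$ of Proposition \ref{proposition: function-u-k} one shows $O_{u_i}\circledast_k\bigl(e^{k\cdot\beta}\otimes e_{L^k}\bigr)=0$ by a symbol computation resting on the K\"ahler identity $\frac{\partial\rho_1}{\partial z^i}+\frac{\partial\omega_{i\bar{j}}}{\partial z^k}\omega^{k\bar{j}}=0$, and then Lemma \ref{lemma:bracket of flat sections} gives $[O_{u_i},O_g]_{\star_k}=-\frac{1}{k}O_{\partial g/\partial z^i}$, whence $\varphi(u_i)=-\frac{1}{k}\partial_{z^i}$ in the frame $e_{L^k}$ and holomorphic functions together with the $u_i$ generate $\mathcal{D}(L^{\otimes k})$. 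You yourself flag the dictionary between ``finite anti-holomorphic degree'' and ``differential operator of the expected order and symbol'' as the main obstacle; that dictionary, carried out via the explicit flat section $e^{k\cdot\beta}\otimes e_{L^k}$ and the functions $u_i$, is precisely what is missing from your argument.
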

This sheaf-theoretic description of our quantization procedure provides an example of gluing of quantizations over open sets to global ones in the K\"ahler setting. Theorem \ref{theorem: main-intro} can be generalized beyond the case of Berezin-Toeplitz quantization by changing the Karabegov form of the Fedosov quantization, so that holomorphic differential operators on \emph{any} holomorphic line bundle can be realized as quantizations of a class of quantizable functions. See the discussion at the end of Section \ref{section:quantizable functions as HDOs}.

Our notion of quantizable functions is a vast generalization of the previous notion of quantizable functions (or polarization-preserving functions) in geometric quantization (see e.g. \cite{Lerman}), which can only produce first order differential operators. In Section \ref{section: quantum-moment-maps}, we will see that quantizable functions in our sense also arise from Hamiltonian $G$-actions on the K\"ahler manifold $X$. More precisely, we will show that images of \emph{quantum moment maps} are all examples of first order quantizable functions in Theorem \ref{theorem: quantum-moment-maps-quantizable-functions}. From this, we obtain a Lie algebra homomorphism from the Lie algebra $\g$ of $G$ to the space of quantizable functions.
When $X$ is a flag variety $G/B$, this reproduces the Lie algebra representation in the Borel-Weil-Bott Theorem \cite{Bott}.

\subsection*{Conventions}
\begin{itemize}
	\item  Let $X$ be a smooth manifold. We denote by $\Omega^k_X$ the bundle of differential $k$-forms on $X$ and by $\Omega^\bullet_X = \bigoplus_{k}\Omega^k_X$ the full de Rham complex. Global smooth differential forms on  $X$ will be denoted by 
	$$
	\A_X^\bullet=\Gamma(X, \Omega^\bullet_X), \quad \text{where}\ \A_X^k=\Gamma(X, \Omega^k_X).
	$$
	Given a vector bundle $E$, the complex of $E$-valued differential forms is denoted by
	$$
	\A^\bullet_X(E)=\Gamma(X, \Omega^\bullet_X\otimes E). 
	$$
	\item For a complex manifold $X$, we let
	\begin{itemize}
		\item $TX$ and $T^*X$ denote the holomorphic tangent and cotangent bundles respectively;
		\item $\overline{TX}$ and $\overline{T^*X}$ denote the anti-holomorphic tangent and cotangent bundles respectively;
		\item $TX_\R$ and $T^*X_\R$ denote the real tangent and cotangent bundles respectively;
		\item $TX_\C$ and $T^*X_\C$ denote the complexified tangent and cotangent bundles respectively.
	\end{itemize}
\medskip

\item We use the Einstein summation convention throughout this paper. 
	\end{itemize}




\subsection*{Acknowledgement}

The authors thank Nikolas Ziming Ma and Shilin Yu for very helpful discussions on this project. We also thank the referees for many valuable comments and suggestions for improvement.
Kwokwai Chan was supported by a grant from the Hong Kong Research Grants Council (Project No. CUHK14301621) and direct grants from CUHK.
Naichung Conan Leung was supported by grants of the Hong Kong Research Grants Council (Project No. CUHK14301619 \& CUHK14301721) and a direct grant (Project No. 4053400) from CUHK.
Qin Li was supported by grants from National Science Foundation of China (Project No. 12071204) and Guangdong Basic and Applied Basic Research Foundation (Project No. 2020A1515011220). Qin Li also thanks the SUSTech International Center for Mathematics for hospitality. 

\section{Quantizable functions via Fedosov quantization}

Recall that a {\em deformation quantization} of a symplectic manifold $(X, \omega)$ is a formal deformation of the commutative algebra $(C^{\infty }(X),\cdot)$ equipped with pointwise multiplication to a noncommutative one $( C^{\infty }( X)[[\hbar]] ,\star) $ equipped with a {\em star product} of the following form
$$
f\star g=fg+\sum_{i\geq 1}\hbar^i\cdot C_i(f,g),
$$
where each $C_i(-,-)$ is a bi-differential operator, so that the leading order of noncommutativity is a constant multiple of the Poisson bracket $\{-,-\}$ associated to $\omega$, i.e.,
\begin{equation}\label{equation: Poisson-bracket}
C_1(f,g)-C_1(g,f)=\frac{d}{d\hbar}\Big|_{\hbar=0}\left( f\star_{\hbar}g - g\star_{\hbar}f\right)
=\frac{\sqrt{-1}}{2}\left\{ f,g\right\}.
\end{equation}
In \cite{Fed}, Fedosov gave a beautiful geometric construction of deformation quantizations on symplectic manifolds. 

\subsection{Fedosov quantization of a K\"ahler manifold}
\

In this section, we briefly review our construction of Fedosov quantization in the K\"ahler case in \cite{CLL}. We will focus on Wick type star products on K\"ahler manifolds. The K\"ahler form on a K\"ahler manifold $X$ will always be written in local coordinates as
$$
\omega=\omega_{i\bar{j}}dz^i\wedge d\bar{z}^j,
$$
where we adopt the convention that $\omega^{\bar{k}i}\omega_{i\bar{j}}=\delta_{\bar{j}}^{\bar{k}}$.

We consider the following \emph{Weyl bundles} on $X$:
\begin{align*}\label{equation: Weyl-bundle}
 \W_{X}& := \widehat{\Sym}T^*X, \quad \overline{\W}_X:=\widehat{\Sym}\overline{T^*X},\\
 \W_{X,\C}& := \W_{X}\otimes_{\mathcal{C}^\infty_X}\overline{\W}_X=\widehat{\Sym}T^*X_{\C}.
\end{align*}
To give explicit expressions of these bundles, we let $(z^1,\cdots, z^n)$ be a local holomorphic coordinate system on $X$, use $dz^i,d\bar{z}^j$'s to denote $1$-forms in $\A_X^\bullet$ and use $y^i,\bar{y}^j$ to denote sections in $\W_{X,\C}$. The K\"ahler form enables us to define a non-commutative fiberwise Wick product on $\W_{X,\C}$: 
\begin{equation}\label{equation: fiberwise-Wick-product}
 a\star b := \sum_{k\geq 0}\frac{\hbar^k}{k!}\cdot\omega^{i_1\bar{j}_1}\cdots\omega^{i_k\bar{j}_k}\cdot\frac{\partial^k a}{\partial y^{i_1}\cdots\partial y^{i_k}}\frac{\partial^k b}{\partial \bar{y}^{j_1}\cdots\partial \bar{y}^{j_k}}.
\end{equation}

Throughout this paper, we denote by $\nabla$ the Levi-Civita connection on $X$, and its natural extension to the Weyl bundle $\W_{X,\C}$. By \cite[Proposition 4.1]{Bordemann}, its curvature can be written as a bracket:
$$
\nabla^2=\frac{1}{\hbar}[R_\nabla,-]_\star, 
$$
where $R_\nabla=R_{i\bar{j}k\bar{l}}dz^i\wedge d\bar{z}^j\otimes y^k\bar{y}^l\in\A_X^2(\W_{X,\C})$. 

A natural filtration on these Weyl bundles is defined by polynomial degrees. For instance, $(\overline{\W_X})_{\leq N}$ denotes the sum of anti-holomorphic monomials of polynomial degree $\leq N$. 
The {\em symbol map}
\begin{equation}\label{equation: symbol-map}
	\sigma: \A_X^\bullet(\W_{X,\C})[[\hbar]]\rightarrow\A_X^\bullet[[\hbar]].
\end{equation}
is defined by setting all $y^i,\bar{y}^j$'s to zero. Here $\A_X^\bullet(\W_{X,\C})[[\hbar]]$ denotes the complex of differential forms on $X$ with values in the Weyl bundle. 

\begin{defn}\label{definition: operators}
We will use the notation $\mathcal{W}_{p,q}$ to denote the component $\Sym^p T^*X\otimes_{\mathcal{C}^\infty_X}\Sym^q\overline{T^*X}$ of $\mathcal{W}_{X,\mathbb{C}}$; sections of this subbundle are said to be \emph{of type $(p,q)$}. There are four natural operators acting as derivations on $\A_X^\bullet(\mathcal{W}_{X,\mathbb{C}})$:
\begin{align*}
\delta^{1,0} a  = dz^i\wedge\frac{\partial a}{\partial y^i},\quad 
\delta^{0,1}a  = d\bar{z}^j\wedge\frac{\partial a}{\partial\bar{y}^j},
\end{align*}
as well as
\begin{align*}
(\delta^{1,0})^*a  = y^k\cdot \iota_{\partial_{z^k}}a, \quad
(\delta^{0,1})^*a  = \bar{y}^j\cdot \iota_{\partial_{\bar{z}^j}}a.
\end{align*}
We define the operators $(\delta^{1,0})^{-1}$ and $(\delta^{0,1})^{-1}$ by normalizing $(\delta^{1,0})^{*}$ and $(\delta^{1,0})^{*}$ respectively: 
\begin{equation*}\label{equation: delta-1-0-inverse}
 (\delta^{1,0})^{-1}:=\frac{1}{p_1+p_2}(\delta^{1,0})^*\ \text{on $\A_X^{p_1,q_1}(\mathcal{W}_{p_2,q_2})$},
\end{equation*}
\begin{equation*}\label{equation: delta-0-1-inverse}
 (\delta^{0,1})^{-1}:=\frac{1}{q_1+q_2}(\delta^{0,1})^*\ \text{on $\A_X^{p_1,q_1}(\mathcal{W}_{p_2,q_2})$}.
\end{equation*}
\end{defn}
\begin{rmk}
	It is not difficult to see that the operators in Definition \ref{definition: operators} are all independent of the coordinates chosen. 
\end{rmk}
Let $\pi_{0,*}$ be the natural projection from $\A_X^\bullet(\mathcal{W}_{X,\mathbb{C}})$ to $\A_X^{0,\bullet}(\overline{\mathcal{W}}_X)$. Then
we have the following useful equality:
\begin{equation}\label{equation: delta-1-0-and-inverse}
\text{id}-\pi_{0,*}  = \delta^{1,0}\circ(\delta^{1,0})^{-1}+(\delta^{1,0})^{-1}\circ\delta^{1,0}.
\end{equation}
We also define the fiberwise de Rham differential as $\delta:=\delta^{1,0}+\delta^{0,1}$. 
\begin{defn}
A connection on the formal Weyl bundle $\W_{X,\C}[[\hbar]]$ of the form 
$$
D = \nabla-\delta+\frac{1}{\hbar}[I,-]_{\star}
$$
is called a {\em Fedosov connection} if $D$ is flat, i.e. $D^2=0$.
Here $\nabla$ is the Levi-Civita connection,  and $I\in\A^1_X(\W_{X,\C})[[\hbar]]$ is a $1$-form valued section of $\W_{X,\C}[[\hbar]]$. Such a connection can be extended to a differential on $\A^\bullet_X(\W_{X,\C})$.
\end{defn}

\begin{notn}
	Let $\nabla$ be the Levi-Civita connection. We define the following operator 
	$$
	\tilde{\nabla}^{1,0}:=(\delta^{1,0})^{-1}\circ\nabla^{1,0};
	$$
	the operator $\tilde{\nabla}^{0,1}$ is similarly defined. 
\end{notn}
For later computations, we need the following
\begin{lem}\label{lemma: flat-section-1-0-comonent}
For any $k\geq 0$, given any $\alpha\in\Gamma(X,\Sym^k\overline{TX})$, there exists a unique $\tilde{\alpha}$ such that $(\nabla^{1,0}-\delta^{1,0})(\tilde{\alpha})=0$ and that $\pi_{0,*}(\tilde{\alpha})=\alpha$. 
\end{lem}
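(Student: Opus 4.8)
The plan is to solve the equation $(\nabla^{1,0}-\delta^{1,0})(\tilde\alpha)=0$ with initial condition $\pi_{0,*}(\tilde\alpha)=\alpha$ by an iteration on polynomial degree in the holomorphic variables $y^i$, exactly in the style of Fedosov's recursive construction. First I would write $\tilde\alpha=\sum_{p\geq 0}\tilde\alpha_p$ where $\tilde\alpha_p\in\Gamma(X,\Sym^p T^*X\otimes\Sym^k\overline{T^*X})$ is the type-$(p,k)$ component (note that $\nabla^{1,0}$ and $\delta^{1,0}$ both preserve the anti-holomorphic degree $k$, so it is consistent to look for $\tilde\alpha$ with pure anti-holomorphic degree $k$). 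Setting $\tilde\alpha_0=\alpha$, the equation $(\nabla^{1,0}-\delta^{1,0})\tilde\alpha=0$ decomposes, after applying the projection onto $\A_X^{1,0}(\W_{p,k})$, into $\delta^{1,0}\tilde\alpha_{p+1}=\nabla^{1,0}\tilde\alpha_{p}$ for each $p\geq 0$.

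The key step is to invert $\delta^{1,0}$ using \eqref{equation: delta-1-0-and-inverse}. Since $\nabla^{1,0}\tilde\alpha_p\in\A_X^{1,0}(\W_{p,k})$ has no $\A_X^{0,\bullet}(\overline\W_X)$-component (its holomorphic polynomial degree is $p\geq 0$ but it carries a $dz$, so it lies in the image of $\pi_{0,*}$ only if it vanishes there — more precisely it is annihilated by $\pi_{0,*}$ since it has form-degree $(1,0)$), applying \eqref{equation: delta-1-0-and-inverse} gives
\begin{equation*}
\nabla^{1,0}\tilde\alpha_p=\delta^{1,0}(\delta^{1,0})^{-1}\nabla^{1,0}\tilde\alpha_p+(\delta^{1,0})^{-1}\delta^{1,0}\nabla^{1,0}\tilde\alpha_p.
\end{equation*}
I would then check that $\delta^{1,0}\nabla^{1,0}\tilde\alpha_p=0$: this follows because $(\delta^{1,0})^2=0$, $(\nabla^{1,0})^2$ has no purely holomorphic curvature contribution on a K\"ahler manifold (the curvature $R_\nabla$ is of type $(1,1)$ in the $dz,d\bar z$ variables), and $\delta^{1,0}$ anticommutes with $\nabla^{1,0}$ up to such curvature terms — so $\delta^{1,0}\nabla^{1,0}\tilde\alpha_p=-\nabla^{1,0}\delta^{1,0}\tilde\alpha_p=-\nabla^{1,0}\nabla^{1,0}\tilde\alpha_{p-1}=0$ inductively (with the base case $\delta^{1,0}\tilde\alpha_0=\delta^{1,0}\alpha=0$ since $\alpha$ has holomorphic degree $0$). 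Hence we may take
\begin{equation*}
\tilde\alpha_{p+1}=(\delta^{1,0})^{-1}\nabla^{1,0}\tilde\alpha_{p},
\end{equation*}
which produces a well-defined section of $\Sym^{p+1}T^*X\otimes\Sym^k\overline{T^*X}$, and by induction $\tilde\alpha=\sum_p\tilde\alpha_p$ solves the equation. For uniqueness, if $\tilde\beta$ also satisfies $(\nabla^{1,0}-\delta^{1,0})\tilde\beta=0$ with $\pi_{0,*}\tilde\beta=0$, then $\gamma:=\tilde\alpha-\tilde\beta$ satisfies $\delta^{1,0}\gamma=\nabla^{1,0}\gamma$ with $\pi_{0,*}\gamma=0$; applying $(\delta^{1,0})^{-1}$ and \eqref{equation: delta-1-0-and-inverse} and arguing by induction on the lowest holomorphic degree appearing in $\gamma$ forces $\gamma=0$.

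I expect the main obstacle to be the bookkeeping needed to justify $\delta^{1,0}\nabla^{1,0}\tilde\alpha_p=0$ cleanly — i.e., verifying that the relevant mixed curvature/commutator terms of $\delta^{1,0}$ and $\nabla^{1,0}$ vanish because of the K\"ahler condition, rather than just the abstract vanishing $(\nabla^{1,0})^2=0$ which is false in general (only $(\nabla^{1,0})^2+(\nabla^{0,1})^2+\nabla^{1,0}\nabla^{0,1}+\nabla^{0,1}\nabla^{1,0}$, i.e. the full curvature, has a controlled form). The cleanest route is probably to observe that on a K\"ahler manifold the Levi-Civita connection preserves the splitting $T^*X_\C=T^*X\oplus\overline{T^*X}$, so $\nabla^{1,0}$ restricted to $\Sym^\bullet T^*X\otimes\Sym^k\overline{T^*X}$ really is a flat partial connection in the holomorphic directions modulo the $(1,1)$-curvature, making the telescoping argument above rigorous. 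Everything else is the standard Fedosov degree-by-degree recursion.
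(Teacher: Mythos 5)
Your proposal is correct and takes essentially the same route as the paper: the paper defines $\tilde{\alpha}=\sum_{p\geq 0}(\tilde{\nabla}^{1,0})^p(\alpha)$, i.e.\ $\tilde{\alpha}_{p+1}=(\delta^{1,0})^{-1}\bigl(\nabla^{1,0}\tilde{\alpha}_p\bigr)$, and verifies $(\nabla^{1,0}-\delta^{1,0})\tilde{\alpha}=0$ via the homotopy identity \eqref{equation: delta-1-0-and-inverse} and the same telescoping you use. The only differences are that you spell out why $\delta^{1,0}\nabla^{1,0}\tilde{\alpha}_p=0$ (through the exact anticommutation of $\delta^{1,0}$ with $\nabla^{1,0}$ and the vanishing of $(\nabla^{1,0})^2$ coming from the type $(1,1)$ curvature in the K\"ahler case, a fact the paper simply asserts) and that you sketch the uniqueness argument, which the paper's proof leaves implicit.
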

\begin{proof}
For $k\geq 0$, let $\alpha_k := (\tilde{\nabla}^{1,0})^k(\alpha)$, and define $\tilde{\alpha}$ by
$$
\tilde{\alpha} := \sum_{k\geq 0}\alpha_k. 
$$
Clearly, $\pi_{0,*}(\tilde{\alpha})=\alpha$. According to our construction, we then have $\alpha_{k+1}=(\delta^{1,0})^{-1}\left(\nabla^{1,0}\alpha_k\right)$
because $\delta^{1,0}(\nabla^{1,0}\alpha_k)=0$ for all $k\geq 0$. Applying equation \eqref{equation: delta-1-0-and-inverse}, we obtain 
\begin{align*}
\delta^{1,0}\alpha_{k+1} & = \delta^{1,0}\circ(\delta^{1,0})^{-1}\left(\nabla^{1,0}\alpha_k\right)\\
& = \delta^{1,0}\circ(\delta^{1,0})^{-1}\left(\nabla^{1,0}\alpha_k\right)+(\delta^{1,0})^{-1}\circ\delta^{1,0}\left(\nabla^{1,0}\alpha_k\right) = \nabla^{1,0}\alpha_k.
\end{align*}
Thus we get
\begin{align*}
	(\nabla^{1,0}-\delta^{1,0})(\tilde{\alpha}) & = -\delta^{1,0}(\alpha_0)+\nabla^{1,0}(\alpha_0)-\delta^{1,0}(\alpha_1)+\nabla^{1,0}(\alpha_1)-\delta^{1,0}(\alpha_2)\cdots\\
	& = -\delta^{1,0}(\alpha_0) = 0. 
\end{align*}
\end{proof}

The main result in Fedosov's approach to deformation quantization is summarized in the following 
\begin{thm}[Fedosov \cite{Fed}]\label{theorem: original Fedosov}
	There exist Fedosov connections on the Weyl bundle $\W_{X,\C}[[\hbar]]$. Furthermore, for every formal smooth function $f\in C^\infty(X)[[\hbar]]$, there is a unique flat section $O_f$ of the Weyl bundle with $\sigma(O_f)=f$. The associated deformation quantization (or star product) is defined by the formula 
	$$
	O_f\star O_g = O_{f\star g}.
	$$
\end{thm}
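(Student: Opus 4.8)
The plan is to construct the Fedosov connection iteratively by degree, and then construct the flat sections $O_f$ by a similar iteration. First I would address existence of a Fedosov connection $D = \nabla - \delta + \tfrac{1}{\hbar}[I,-]_\star$. Writing out $D^2 = 0$ and using $\delta^2 = 0$ together with the curvature formula $\nabla^2 = \tfrac1\hbar[R_\nabla,-]_\star$, one reduces flatness to an equation of the shape
\begin{equation*}
\delta I = R_\nabla + \nabla I + \tfrac{1}{\hbar} I\star I + (\text{central terms}),
\end{equation*}
modulo the usual ambiguity by a central (scalar-valued) form. I would solve this by the standard Fedosov recursion: normalize so that $(\delta)^{-1} I = 0$ and $\sigma(I)=0$, apply $(\delta)^{-1}$ to both sides, and use the homotopy identity \eqref{equation: delta-1-0-and-inverse} (and its $(0,1)$-analogue) in the form $\mathrm{id} - (\text{projection to degree-zero part}) = \delta(\delta)^{-1} + (\delta)^{-1}\delta$ to get a fixed-point equation $I = (\delta)^{-1}\bigl(R_\nabla + \nabla I + \tfrac1\hbar I\star I\bigr)$. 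Because $R_\nabla$ has Weyl-degree $2$ and the operators $(\delta)^{-1}$, $\nabla$, and $\star$ interact with the filtration so that each iteration increases polynomial degree, this recursion converges in the $\hbar$-adically and filtration-completed Weyl bundle, giving a unique $I$; one then checks $D^2$ is central and, being in the image of $\delta^{-1}$ with vanishing symbol, must vanish.

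Next, for the flat sections: given $f \in C^\infty(X)[[\hbar]]$ I would solve $D(O_f) = 0$ with $\sigma(O_f) = f$ by the same device. Splitting $D(O_f) = 0$ as $\delta O_f = \nabla O_f + \tfrac1\hbar[I, O_f]_\star$ and applying $(\delta)^{-1}$, the homotopy identity turns this into
\begin{equation*}
O_f = f + (\delta)^{-1}\Bigl(\nabla O_f + \tfrac{1}{\hbar}[I, O_f]_\star\Bigr),
\end{equation*}
which again has a unique solution by iteration on polynomial degree (the right-hand operator strictly raises degree since $(\delta)^{-1}$ raises Weyl-degree while $\nabla$ and $[I,-]_\star$ do not lower it below what $(\delta)^{-1}$ then raises). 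Uniqueness of $O_f$ follows because the difference of two flat lifts with the same symbol satisfies the same fixed-point equation with $f = 0$, hence vanishes. Flatness of $D$ is what guarantees the space of flat sections is closed under the fiberwise Wick product $\star$: if $DO_f = DO_g = 0$ then $D(O_f \star O_g) = (DO_f)\star O_g \pm O_f \star (DO_g) = 0$ since $D$ is a derivation of $\star$ (this uses that $I$ is a $1$-form valued section and $\delta$, $\nabla$ are derivations of $\star$). Therefore $O_f \star O_g$ is a flat section, and defining $f \star g := \sigma(O_f \star O_g)$ gives a well-defined associative product; associativity is inherited from that of the fiberwise $\star$, and the form $f\star g = fg + \sum_{i\ge1}\hbar^i C_i(f,g)$ with the Poisson-bracket leading term \eqref{equation: Poisson-bracket} is read off from the leading behavior of $I$ and the fiberwise product.

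The main obstacle I expect is not any single identity but the bookkeeping of the \emph{convergence and well-definedness of the recursions} in the completed bundle: one must set up a grading (a combination of $\hbar$-degree and total polynomial degree in $y,\bar y$) in which every step of both fixed-point iterations is strictly contracting, and verify that $\nabla$, $\delta^{-1}$, and the Wick product $\star$ all respect this grading appropriately — in particular that $\star$, despite summing over $k$ with $\hbar^k$ prefactors, does not spoil the estimate. A secondary subtlety is pinning down the residual central freedom in $I$ (one typically imposes a normalization such as $\sigma(I) = 0$ and $(\delta)^{-1}I = 0$) so that the Fedosov connection — and hence the resulting star product, including its Karabegov/Wick type — is uniquely characterized; this is exactly the point where the specific Wick-type choices of \cite{CLL} enter and where one should be careful to state precisely which connection is meant.
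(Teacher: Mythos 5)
Your proposal is correct and follows the standard Fedosov recursion --- solving the Fedosov equation for $I$ via $\delta^{-1}$ and the homotopy identity with the degree filtration making the iteration contracting, then obtaining $O_f$ as the unique solution of the fixed-point equation $O_f = f + \delta^{-1}\bigl(\nabla + \tfrac{1}{\hbar}[I,-]_\star\bigr)(O_f)$ and defining the star product through flatness of products of flat sections. This is precisely the argument of the cited reference \cite{Fed}: the paper itself states the theorem without proof, and the iterative characterization you derive is exactly the one the paper invokes later (e.g.\ in the proof of Proposition \ref{proposition: function-u-k}).
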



In \cite{CLL}, we showed that a class of Fedosov connections can be obtained by quantizing Kapranov's $L_\infty$ structure on a K\"ahler manifold \cite{Kapranov}:
\begin{thm}[Theorems 2.17 and 2.25 in \cite{CLL}]\label{theorem: Fedosov-connection}
Let $\alpha = \sum_{i\geq 1}\hbar^i\alpha_i$ be a representative of a formal cohomology class in $\hbar H^2_{dR}(X)[[\hbar]]$ of type $(1,1)$. Then there exists a solution of the form $I_\alpha = I+ J_\alpha \in \mathcal{A}_X^{0,1}(\mathcal{W}_{X,\mathbb{C}})$ of the \emph{Fedosov equation}, namely,
\begin{equation}\label{equation: Fedosov-equation}
\nabla I_\alpha - \delta I_\alpha + \frac{1}{\hbar} I_\alpha\star I_\alpha + R_\nabla=\alpha.
\end{equation}
We denote the corresponding Fedosov connection by
$$D_{\alpha} := \nabla-\delta+\frac{1}{\hbar}[I_\alpha, -]_{\star}.$$
The deformation quantization associated to the flat connection $D_{\alpha}$ is a Wick type star product with Karabegov form given by $\frac{1}{\hbar}(\omega-\alpha)$. 
\end{thm}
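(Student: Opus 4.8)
The plan is to adapt Fedosov's iterative construction to the Wick setting, using Kapranov's $L_\infty$-structure \cite{Kapranov} as the extra ingredient that confines the solution to $\A^{0,1}_X(\W_{X,\C})$. Decompose $\nabla=\nabla^{1,0}+\nabla^{0,1}$ and $\delta=\delta^{1,0}+\delta^{0,1}$, and recall that $R_\nabla$ is of form-type $(1,1)$. If $I_\alpha$ is sought in $\A^{0,1}_X(\W_{X,\C})$, the Fedosov connection splits by form-bidegree, $D_\alpha=D^{1,0}+D^{0,1}_\alpha$, with $D^{1,0}:=\nabla^{1,0}-\delta^{1,0}$ carrying \emph{no} quantum correction and $D^{0,1}_\alpha:=\nabla^{0,1}-\delta^{0,1}+\frac1\hbar[I_\alpha,-]_\star$; accordingly \eqref{equation: Fedosov-equation} splits into a $(1,1)$-component $\nabla^{1,0}I_\alpha-\delta^{1,0}I_\alpha+R_\nabla=\alpha$ and a $(0,2)$-component $\nabla^{0,1}I_\alpha-\delta^{0,1}I_\alpha+\frac1\hbar I_\alpha\star I_\alpha=0$.

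For existence I would first solve the $(0,2)$-component for $I_\alpha$: equip $\A^\bullet_X(\W_{X,\C})[[\hbar]]$ with the weight filtration assigning weight $1$ to each $y^i,\bar y^j$ and weight $2$ to $\hbar$, apply the homotopy $(\delta^{0,1})^{-1}$ (via the $\delta^{0,1}$-analogue of \eqref{equation: delta-1-0-and-inverse}) and impose $(\delta^{0,1})^{-1}I_\alpha=0$, so that the component becomes $I_\alpha=(\delta^{0,1})^{-1}\big(\nabla^{0,1}I_\alpha+\frac1\hbar I_\alpha\star I_\alpha\big)$, whose right-hand side strictly raises weight; a contraction argument (equivalently, solving weight by weight) then gives a unique $I_\alpha$. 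The delicate point is to \emph{verify} that this $I_\alpha$ also satisfies the $(1,1)$-component, hence genuinely lies in $\A^{0,1}_X(\W_{X,\C})$ and solves \eqref{equation: Fedosov-equation}. This is where K\"ahler geometry enters: $(D^{1,0})^2=0$ because $R_\nabla$ has no $(2,0)$-part and $[\nabla^{1,0},\delta^{1,0}]=0$; the complex $(\A^{\bullet,\bullet}_X(\W_{X,\C}),D^{1,0})$ has enough flat sections by Lemma \ref{lemma: flat-section-1-0-comonent}; and $\alpha-R_\nabla$ is $D^{1,0}$-closed (using $d\alpha=0$, $\delta^{1,0}R_\nabla=0$, and the second Bianchi identity), which lets the $(1,1)$-equation be propagated along the iteration — equivalently, the $\hbar\to 0$ limit of $D_\alpha$ recovers a twist of Kapranov's flat connection. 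Making this compatibility precise is, I expect, the main obstacle; the contraction estimate itself is routine, and the splitting $I_\alpha=I+J_\alpha$ in the statement is obtained afterwards by isolating the $\alpha$-independent part.

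Flatness $D_\alpha^2=0$ then follows formally. Writing $\Omega_\alpha:=R_\nabla+\nabla I_\alpha-\delta I_\alpha+\frac1\hbar I_\alpha\star I_\alpha\in\A^2_X(\W_{X,\C})[[\hbar]]$, the identities $\nabla^2=\frac1\hbar[R_\nabla,-]_\star$, $\delta^2=0$, $\nabla\delta+\delta\nabla=0$, the Bianchi identities $\nabla R_\nabla=\delta R_\nabla=0$, and the Jacobi identity for $[-,-]_\star$ give $D_\alpha^2=\frac1\hbar[\Omega_\alpha,-]_\star$; but \eqref{equation: Fedosov-equation} says $\Omega_\alpha=\alpha$, a scalar $(1,1)$-form and hence central for the fiberwise Wick product, so $D_\alpha^2=0$. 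One should additionally check that the harmonic freedom left by the normalization is exactly matched by the prescribed de Rham class, so that $\Omega_\alpha$ equals $\alpha$ on the nose.

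Finally, consider the star product $f\star g:=\sigma(O_f\star O_g)$, where $O_f,O_g$ are the unique $D_\alpha$-flat lifts supplied by Fedosov's theorem. Since the fiberwise product \eqref{equation: fiberwise-Wick-product} differentiates its first argument only in the $y$-variables and its second only in the $\bar y$-variables, and since $I_\alpha\in\A^{0,1}_X(\W_{X,\C})$ keeps this holomorphic/antiholomorphic bookkeeping intact along the recursion generating the flat lifts, the bidifferential operators $C_i(f,g)$ differentiate $f$ holomorphically and $g$ antiholomorphically; this is precisely the Wick (separation-of-variables) condition, and in particular the product is local. To identify the Karabegov form I would invoke the dictionary between Fedosov data and Karabegov's classification \cite{Karabegov} of Wick-type star products on K\"ahler manifolds: the $\hbar^{-1}$-part of the classifying form is forced by the symplectic structure to be $\frac1\hbar\omega$, while the Weyl-curvature perturbation $\Omega_\alpha=\alpha$ shifts it by $-\frac1\hbar\alpha$, giving the Karabegov form $\frac1\hbar(\omega-\alpha)$. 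A direct order-by-order comparison with Karabegov's normal form — built from a formal K\"ahler potential — confirms the identification.
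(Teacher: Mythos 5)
Your reading of the statement is right as far as the type decomposition goes, but the existence step — the heart of the theorem — is set up in the wrong order and, as written, fails. Note first that the paper does not reprove this result: it is imported from \cite{CLL}, and what the paper records immediately after the statement is the actual shape of the solution, namely $I$ is built by iterating $(\delta^{1,0})^{-1}\circ\nabla^{1,0}$ on the curvature starting from $I_2=(\delta^{1,0})^{-1}(R_\nabla)$, and $J_\alpha=-\sum_{m\geq 1}(\tilde{\nabla}^{1,0})^m(\bar{\partial}\varphi)$ for a local potential $\varphi$ of $\alpha$. In other words, it is the $(1,1)$-part of \eqref{equation: Fedosov-equation} — the only part in which $R_\nabla$ and $\alpha$ appear — that is solved by an explicit $(\delta^{1,0})^{-1}\nabla^{1,0}$-recursion seeded by $R_\nabla$ and by the potential of $\alpha$, and the substantive content (this is where the quantized Kapranov $L_\infty$-relations are used) is the verification of the $(0,2)$-part. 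Your proposal reverses these roles.

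With that reversal the construction collapses: since $I_\alpha$ is sought in $\A_X^{0,1}(\W_{X,\C})$ and both $R_\nabla$ and $\alpha$ are of form-type $(1,1)$, the $(0,2)$-component $\nabla^{0,1}I_\alpha-\delta^{0,1}I_\alpha+\frac{1}{\hbar}I_\alpha\star I_\alpha=0$ is a homogeneous equation with no source term. Your normalization $(\delta^{0,1})^{-1}I_\alpha=0$ turns it into the fixed-point equation $I_\alpha=(\delta^{0,1})^{-1}\bigl(\nabla^{0,1}I_\alpha+\frac{1}{\hbar}I_\alpha\star I_\alpha\bigr)$, of which $I_\alpha=0$ is manifestly a fixed point; so the uniqueness you invoke via the contraction forces $I_\alpha=0$, which does not satisfy the $(1,1)$-component unless $R_\nabla=\alpha$. (If instead uniqueness fails in low weights, then the ``routine'' contraction estimate is exactly what is not available.) Moreover the normalization itself is incompatible with the true solution: already $(\delta^{0,1})^{-1}I_2\neq 0$, since $I_2$ is proportional to $R_{i\bar{j}k\bar{l}}\,y^i\,d\bar{z}^j\otimes y^k\bar{y}^l$ and contracting $d\bar{z}^j$ into $\bar{y}^j$ gives a nonzero section for any non-flat metric. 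So the object your iteration produces cannot pass the ``delicate verification'' of the $(1,1)$-part that you defer. The remaining parts of your sketch — flatness from centrality of $\alpha$ once \eqref{equation: Fedosov-equation} holds, Wick type from the one-sided nature of the fiberwise product together with $I_\alpha\in\A_X^{0,1}(\W_{X,\C})$, and the Karabegov form via Karabegov's classification (in \cite{CLL} this is done essentially through the potential derivatives $u_k$, cf.\ Proposition \ref{proposition: function-u-k} and Lemma \ref{lemma:bracket of flat sections}) — are reasonable in outline, but they all presuppose the existence statement that your argument does not deliver.
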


Let us explain the notations in the decomposition $I_\alpha = I+ J_\alpha$ in Theorem \ref{theorem: Fedosov-connection} (see \cite[Section 2.3]{CLL} for more details):
The term $I$ in the connection $D_\alpha$ is obtained by repeatedly taking covariant exterior derivatives to the curvature tensor. 
Note that the connection $D = D_0 = \nabla-\delta+\frac{1}{\hbar}[I, -]_{\star}$ is also flat and $I$ satisfies the Fedosov equation \eqref{equation: Fedosov-equation} with $\alpha = 0$.
A key property is that $I$ is \emph{uniformly of polynomial degree $1$ in $\overline{\W}_X$}, and we can  decompose $I=\sum_{k\geq 2}I_k$ according to the polynomial degrees in the holomorphic Weyl bundle $\W_X$. In particular, the first term is given by 
$$
I_2=(\delta^{1,0})^{-1}\left(R_{i\bar{j}k{l}}dz^i\wedge d\bar{z}^j\otimes y^k\bar{y}^l\right).
$$
The term $J_\alpha$ is given as follows. Let $\varphi$ be a (locally defined) function such that $\partial\bar{\partial}\varphi=\alpha$. Then we set $J_\alpha:=-\sum_{k\geq 1}(\tilde{\nabla}^{1,0})^k(\bar{\partial}\varphi)$. Thus $J_\alpha$ is independent of the choice of $\varphi$ and uniformly of polynomial degree $0$ in $\overline{\W}_X$.

\begin{rmk}\label{remark: delta}
The operator $\delta$ can also be written as a bracket. Accordingly, the Fedosov connection $D_\alpha$ can be written as 
$$
D_{\alpha}=\nabla+\frac{1}{\hbar}[\gamma_\alpha,-]_\star,
$$
where $\gamma_\alpha=\omega_{i\bar{j}}(d\bar{z}^j\otimes y^i-dz^i\otimes\bar{y}^j)+I_\alpha$. The flatness of $D_{\alpha}$ is then equivalent to the following version of the Fedosov equation:
\begin{equation}\label{equation: Fedosov-equation-gamma}
 \nabla\gamma_\alpha+\frac{1}{\hbar}\gamma_\alpha\star\gamma_\alpha+R_\nabla=-\omega+\alpha.
\end{equation}
\end{rmk}

If the formal $(1,1)$-form $\alpha$ in the Fedosov equation \eqref{equation: Fedosov-equation} is a polynomial in $\hbar$, then we call the Fedosov connection $D_\alpha$ and the associated star product {\em admissible}. In this paper, we will only consider admissible Fedosov connections.
Comparing to several previous Fedosov constructions of Wick type star products \cites{Bordemann, Karabegov00, Neumaier}, there are several nice properties of our Fedosov connections $D_{\alpha}$, which will play important roles in this paper:
\begin{itemize}
	\item First of all, the Karabegov form of the associated star product can be read off from the Fedosov equation \eqref{equation: Fedosov-equation-gamma}. 
	\item Secondly, if the formal $(1,1)$-form $\alpha$ is only a polynomial in $\hbar$, then the term $I_\alpha$ in the Fedosov connection $D_\alpha$ is also a polynomial in $\hbar$. This enables us to evaluate $\hbar$ at any complex number without convergence issues. This only works for our construction of Fedosov connections on K\"ahler manifolds and is significantly different from Fedosov's original construction. 
	\item Lastly, it was shown in \cite{CLL} that for a (local) holomorphic function $f$, its associated flat section $O_f$ is only a section of the holomorphic Weyl bundle $\W_X$. (This fact is independent of the closed formal $(1,1)$-form $\alpha$.) 
\end{itemize}

\subsection{Quantizable functions}
\

To define quantizable functions, we need the following
\begin{defn}
	We define a \emph{weight} on $\W_{X,\C}[[\hbar]]$ by assigning weights on its generators:
	\begin{equation}\label{equation: weights-formal-Weyl-bundle}
		|y^i|=0, \hspace{2mm} |\bar{y}^j|=2, \hspace{2mm}|\hbar|=2.
	\end{equation}
   This weight is compatible with the fiberwise Wick product $\star$, in the sense that the product preserves the weight. 
   It is clear that a section of $\W_{X,\C}$ is \emph{of finite weight} if and only if it is both a polynomial in $\hbar$ and $\bar{y}^j$'s. There is an associated increasing filtration on the Weyl bundle; explicitly, we let $(\W_{X,\C}[[\hbar]])_N$ denote sums of monomials with weights $\leq N$. 
\end{defn}
\begin{rmk}
 A section of the formal Weyl bundle lives in a finite filtration component if and only if it lives in $\Sym^\bullet\overline{T^*X}\otimes\W_X[\hbar]$.
\end{rmk}
\begin{rmk}
	This weight is different from the one in \cite{Fed}, although both are compatible with the fiberwise Wick product. The weight we just defined is a \emph{polarized} version, namely, only anti-holomorphic terms in $\W_{X,\C}$ have non-zero weights. 
\end{rmk}
Admissible Fedosov connections $D_{\alpha}$ of polynomial degree 1 in $\hbar$ have the following nice property:
\begin{lem}
	Suppose $D_\alpha$ is an admissible Fedosov connection of polynomial degree $l$ in $\hbar$. Then for any $N\geq 0$, we have $D_\alpha\left((\W_{X,\C}[[\hbar]])_N\right)\subset (\W_{X,\C}[[\hbar]])_{N+2l}$.
\end{lem}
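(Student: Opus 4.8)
The plan is to bound separately the amount by which each of the three operators in $D_\alpha = \nabla - \delta + \tfrac{1}{\hbar}[I_\alpha,-]_\star$ can raise the weight, and then add the bounds. The basic mechanism is that the fiberwise Wick product \eqref{equation: fiberwise-Wick-product} is weight-additive: in each summand the factor $\hbar^m$ (weight $2m$) exactly compensates the weight $-2m$ lost by differentiating the right-hand factor $m$ times in the $\bar y$'s, while differentiating the left-hand factor in the $y$'s costs nothing since $|y^i|=0$. Thus if $a$ has weight $p$ and $c$ has weight $q$, then $a\star c$ and $c\star a$ both have weight $p+q$; their $m=0$ terms are the commutative pointwise product and hence cancel in $[c,a]_\star$, so $[c,-]_\star$ has $\hbar$-adic valuation at least $1$ and $\tfrac1\hbar[c,-]_\star$ is well-defined. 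Rerunning the weight count on the surviving $m\ge 1$ terms shows that if every monomial of $c$ has weight $\le q$, then $\tfrac1\hbar[c,-]_\star$ raises the weight by at most $q-2$.

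I would then note that $\nabla - \delta$ does not raise the weight. On a Kähler manifold the Levi-Civita connection preserves the type decomposition $\W_{p,q}$ — only the holomorphic and antiholomorphic Christoffel symbols appear, acting by $y^j\partial_{y^k}$ and $\bar y^j\partial_{\bar y^k}$ — while the de Rham part differentiates only coefficient functions, and neither touches $\hbar$; so $\nabla$ preserves the weight. And $\delta^{1,0}$ deletes a $y^i$ (weight $0$), so it preserves the weight, whereas $\delta^{0,1}$ deletes a $\bar y^j$ (weight $2$), so it strictly lowers it.

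Finally I would analyze $\tfrac1\hbar[I_\alpha,-]_\star$ via the decomposition $I_\alpha = I + J_\alpha$ of Theorem~\ref{theorem: Fedosov-connection} together with the two structural facts recalled in the text: $I$ is built from iterated covariant derivatives of the curvature, hence $\hbar$-independent, and has antiholomorphic polynomial degree exactly $1$, so it is weight-homogeneous of weight $2$; while $J_\alpha$ has antiholomorphic degree $0$ and — the connection being of $\hbar$-degree $k$ — is a polynomial in $\hbar$ of degree $\le k$, so each of its monomials has weight $\le 2k$. By the first paragraph, $\tfrac1\hbar[I,-]_\star$ raises the weight by at most $0$, and $\tfrac1\hbar[J_\alpha,-]_\star$ by at most $2k-2$ (that $J_\alpha$ contains no $\bar y$ makes this especially clean, since then $a\star J_\alpha$ collapses to the pointwise product $aJ_\alpha$). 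Combining the three estimates, $D_\alpha$ raises the weight by at most $\max\{0,2k-2\}\le 2k$, which is the assertion; when $k=0$ one has $\alpha=0$ and $J_\alpha$ absent, and $D_\alpha$ even preserves the weight. I do not expect a genuine obstacle here — it is careful weight bookkeeping — but the one subtlety to respect is the explicit $\tfrac1\hbar$ in $D_\alpha$: it is meaningful only because $\star$-commutators have positive $\hbar$-valuation, and it is exactly this factor that stops the weights $2$ of $I$ and $\le 2k$ of $J_\alpha$ from producing an increase larger than $2k$.
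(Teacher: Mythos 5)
Your argument is correct: the weight-additivity of the Wick product \eqref{equation: fiberwise-Wick-product}, the cancellation of the $m=0$ terms in the commutator, the weight-preservation of $\nabla-\delta$, and the structural facts that $I$ is $\hbar$-independent of antiholomorphic degree $1$ while $J_\alpha$ has antiholomorphic degree $0$ and $\hbar$-degree $\le k$ give exactly the required bound (indeed the slightly sharper $N+\max\{0,2k-2\}$). The paper states this lemma without proof, and your bookkeeping is precisely the intended argument, so there is nothing further to reconcile.
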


\begin{defn}\label{definition: formal-quantizable-functions}
A {\em formal quantizable function} is a formal function $f\in C^\infty(X)[[\hbar]]$ whose associated flat section $O_f$ lives in a finite filtration component of $\W_{X,\C}[[\hbar]]$,  or equivalently,  $O_f\in\Sym^\bullet\overline{T^*X}\otimes\W_X[\hbar]$. These functions can also be defined on any open subset of $X$, so they define a sheaf which we denote by $\mathcal{C}^\infty_{q,\hbar}$. (The subscript $q$ here stands for ``quantizable''.)
\end{defn}

\begin{eg}\label{example: holomorphic-functions}
	Every (local) holomorphic function $f$ is a formal quantizable function for \emph{any} Karabegov form. Explicitly, the flat section associated to $f$ is given by 
	$$
	O_f=\sum_{k\geq 0}(\tilde{\nabla}^{1,0})^k(f),
	$$
	We have seen in Lemma \ref{lemm: flat-section-1-0-components} that $D_{\alpha}^{1,0}(O_f)=0$. On the other hand, we have
	$$
	D_{\alpha}^{0,1}(O_f)=\sum_{k\geq 0}(\tilde{\nabla}^{1,0})^k(\bar{\partial}f)=0,
	$$
	since $f$ is holomorphic. Thus  $O_f\in\left(\W_{X,C}[[\hbar]]\right)_0=\W_X$, making $f$ a formal quantizable function. 
\end{eg}
A natural question is whether there are examples of formal quantizable functions other than holomorphic ones. We will answer this question by giving an explicit class of such formal functions in the following proposition, which will play an important role in later sections.

\begin{prop}\label{proposition: function-u-k}
	Let $\alpha=\sum_{i\geq 1}\hbar^i\alpha_i$ be a formal closed differential form of type $(1,1)$ and $\sum_{i\geq 1}\hbar^i\rho_i$ be a potential of $\alpha$ (i.e., $\partial\bar{\partial}\rho_i=\alpha_i$ for each $i$), and let $\rho$ be a potential of $\omega$. Then the (locally defined) formal functions $ u_k=\frac{\partial}{\partial z^k}\left(\rho - \sum_{i\geq 1}\hbar^i \rho_i\right)$ satisfy the following two properties:
	\begin{enumerate}
		\item The anti-holomorphic terms in $O_{u_k}$ have polynomial degrees at most $1$, i.e., $O_{u_k}$ is a section of $\W_X\otimes(\overline{\W_X})_{\leq 1}$.
		\item The terms in $O_{u_k}$ which live in $\overline{\mathcal{W}}_X$ (which we call ``terms of purely anti-holomorphic type'') are given by  
		\begin{equation}\label{equation: local-quantizable-function-anti-holomorphic}
		u_k+\omega_{k\bar{m}}\bar{y}^m.
		\end{equation}
	\end{enumerate}
	Hence if the Fedosov connection $D_\alpha$ is admissible (i.e., $\alpha$ is a polynomial in $\hbar$), then these $u_k$'s are all formal quantizable functions. 
\end{prop}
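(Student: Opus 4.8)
The plan is to compute the flat section $O_{u_k}$ explicitly by solving the equation $D_\alpha O_{u_k}=0$ order by order, exploiting the special form of the Fedosov connection $D_\alpha = \nabla - \delta + \frac1\hbar[I_\alpha,-]_\star$ with $I_\alpha = I + J_\alpha$, and then reading off the anti-holomorphic degree. The key structural inputs are: $I$ is uniformly of polynomial degree $1$ in $\overline{\W}_X$ (and of degree $\geq 2$ in $\W_X$), while $J_\alpha$ is uniformly of degree $0$ in $\overline{\W}_X$. Since $u_k$ is the derivative of a (formal) \Kahler potential, I expect $O_{u_k}$ to be very close to the flat section of a holomorphic function: indeed if $\rho$ were holomorphic then $u_k=\partial_{z^k}\rho$ would itself be holomorphic and we would be in the situation of Example \ref{example: holomorphic-functions}. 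The defect from holomorphicity is measured by $\bar\partial u_k = \omega_{k\bar m}\,d\bar z^m + \sum_{i\geq 1}\hbar^i(\alpha_i)_{k\bar m}\,d\bar z^m$ (using $\partial_{z^k}\bar\partial\rho$ and $\partial\bar\partial\rho = \omega$, $\partial\bar\partial\rho_i=\alpha_i$), which is a $(0,1)$-form with \emph{constant}, i.e. Weyl-degree-zero, coefficients of anti-holomorphic Weyl-degree zero. This single-$d\bar z$ source is what will generate exactly one unit of anti-holomorphic degree in $\overline{\W}_X$, and no more.

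Concretely, first I would set up the ansatz in terms of the decomposition $O_{u_k}=\sum_{p,q} O^{p,q}$ by $\W_{p,q}$-type, and use the standard Fedosov iteration: the $(1,0)$-part of flatness, $(\nabla^{1,0}-\delta^{1,0})O_{u_k} + \frac1\hbar[I_\alpha,O_{u_k}]^{1,0}_\star = 0$, together with $\sigma(O_{u_k})=u_k$, determines $O_{u_k}$ recursively by applying $(\delta^{1,0})^{-1}$, exactly as in Lemma \ref{lemma: flat-section-1-0-comonent}. The point to track is the anti-holomorphic Weyl-degree: the operators $\nabla^{1,0}$, $(\delta^{1,0})^{-1}$ preserve $\overline{\W}_X$-degree, the bracket with $J_\alpha$ preserves it (since $J_\alpha$ has $\overline{\W}_X$-degree $0$ and the Wick product differentiates holomorphic slots against anti-holomorphic slots), and the bracket with $I$ can only \emph{lower} the $\overline{\W}_X$-degree or keep it the same (bracketing against a degree-$1$ anti-holomorphic term via the Wick product $\star$ either differentiates that $\bar y$ away, or pairs $I$'s $\bar y$ against $O_{u_k}$'s $y$'s, in both cases not increasing $\overline{\W}_X$-degree). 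Therefore if I start the recursion with a seed of anti-holomorphic degree $\leq 1$, every iterate stays of degree $\leq 1$, giving claim (1). The seed itself: solving the $(0,1)$-equation $D^{0,1}_\alpha O_{u_k}=0$ at lowest order forces the degree-$1$ anti-holomorphic part at the $\W_{0,1}$ level to be $\omega_{k\bar m}\bar y^m$, because $\delta^{0,1}(\omega_{k\bar m}\bar y^m) = \omega_{k\bar m}\,d\bar z^m = $ (the Weyl-degree-zero part of) $\bar\partial u_k$ coming from $\partial\bar\partial\rho=\omega$, while the $\alpha_i$-contributions are cancelled against $\bar\partial$ of $-\sum\hbar^i\rho_i$; this gives claim (2), namely that the purely anti-holomorphic part $\pi_{0,*}O_{u_k}$ is $u_k + \omega_{k\bar m}\bar y^m$ with no higher $\bar y$-terms.

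The main obstacle I anticipate is the bookkeeping in the mixed terms $O^{p,1}$ for $p\geq 1$: I must verify carefully that the bracket $\frac1\hbar[I_\alpha, O_{u_k}]_\star$ never feeds anything back into the purely-anti-holomorphic sector $\overline{\W}_X$ beyond what $\partial\bar\partial\rho$ and $\partial\bar\partial\rho_i$ already account for — in other words, that $\pi_{0,*}$ applied to the whole iteration closes up with exactly \eqref{equation: local-quantizable-function-anti-holomorphic} and does not pick up corrections of anti-holomorphic degree $0$ from the $I_2 = (\delta^{1,0})^{-1}(R_{i\bar jk l}dz^i\wedge d\bar z^j\otimes y^k\bar y^l)$ term or higher $I_k$'s. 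The cleanest way to handle this is to observe that $\pi_{0,*}$ of the connection reduces to $\bar\partial$ plus the $J_\alpha$-bracket (the $I$-bracket and $\delta^{1,0}$ both die under $\pi_{0,*}$ on the relevant components, using that $I$ has no $\overline{\W}_X$-degree-zero part), so that $\pi_{0,*}O_{u_k}$ satisfies precisely the flatness equation for the holomorphic-type flat section of $u_k - \sum\hbar^i\rho_i$-derivative data, which is solved in closed form by the $(\tilde\nabla^{1,0})$-series exactly as in Example \ref{example: holomorphic-functions} and the definition of $J_\alpha$; the residual $\omega_{k\bar m}\bar y^m$ is then the unique degree-$1$ correction needed to kill the leftover $\bar\partial$ of the potential of $\omega$. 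The final sentence of the proposition is then immediate: when $\alpha$ is a polynomial in $\hbar$, $O_{u_k}\in \W_X\otimes(\overline{\W}_X)_{\leq 1}$ together with polynomial-in-$\hbar$ dependence of $D_\alpha$ (hence of $O_{u_k}$) places $O_{u_k}$ in a finite weight filtration component, so $u_k$ is quantizable.
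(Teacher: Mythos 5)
Your proposal follows essentially the same route as the paper's proof: you run the Fedosov recursion for $O_{u_k}$, control the purely anti-holomorphic sector via the structural facts that $I$ has $\W_X$-degree $\geq 2$ and $\overline{\W}_X$-degree $1$ while $J_\alpha$ has $\overline{\W}_X$-degree $0$, identify the seed $\omega_{k\bar{m}}\bar{y}^m$ from the $(0,1)$-part of flatness, and use the cancellation of the $\alpha_i$-part of $\bar{\partial}u_k$ against the bracket with $J_\alpha$ --- exactly the computation displayed in the paper's proof. Your organization of claim (1), first pinning down $\pi_{0,*}O_{u_k}$ and then propagating with $(\tilde{\nabla}^{1,0})$, makes explicit what the paper leaves implicit and is fine.

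Two justifications, however, need repair (neither derails the plan). First, the $(1,0)$-part of flatness together with the symbol does \emph{not} determine $O_{u_k}$: the recursion by $(\delta^{1,0})^{-1}\circ\nabla^{1,0}$ reconstructs $O_{u_k}$ from its full purely anti-holomorphic part $\pi_{0,*}O_{u_k}$ (as in Lemmas \ref{lemma: flat-section-1-0-comonent} and \ref{lemm: flat-section-1-0-components}), not from $u_k$ alone; you in effect use this correct version when you solve the $(0,1)$-equation for the seed, so state it that way. Second, the reason the $I$-bracket cannot feed the purely anti-holomorphic sector is \emph{not} that ``$I$ has no $\overline{\W}_X$-degree-zero part'': what matters is that every monomial of $I$ has $\W_X$-degree at least $2$, so in a Wick commutator against a term carrying at most one $\bar{y}$ its holomorphic slots can never be fully contracted, and, conversely, any monomial with nonzero $\W_X$-degree never produces $\overline{\W}_X$-valued output under $\nabla+\frac{1}{\hbar}[I_\alpha,-]_\star$ (this is the observation the paper's proof isolates). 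Indeed $(J_\alpha)_1$, which has $\W_X$-degree $1$, \emph{does} hit the purely anti-holomorphic sector --- that is precisely the term producing the cancellation you need --- so the degree bound on $I$, not the presence of a $\bar{y}$ in it, is the operative fact. Finally, to exclude $\bar{y}$-degree-$2$ terms in $\pi_{0,*}O_{u_k}$ you also need $\nabla^{0,1}(\omega_{k\bar{m}}\bar{y}^m)=0$, i.e.\ parallelism of $\omega$ under the Levi-Civita connection, which the paper's proof invokes explicitly; include that check when you carry out the bookkeeping you flag as the main obstacle.
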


\begin{proof}
	For simplicity, we will prove the case where $\alpha$ has only one term, namely, $\alpha=\hbar\alpha_1$; the general case can be proven similarly. 	Recall that  $O_{u_k}$ is the unique solution of the iterative equation:
	\begin{align*}
		O_{u_k} = u_k+\delta^{-1}\circ\left(\nabla+\frac{1}{\hbar}[I_\alpha,-]_{\star}\right)(O_{u_k}).
	\end{align*}
	Observe that if a monomial $A$ does not live in $\A_X^{\bullet}(\overline{\mathcal{W}}_{X})$, then $\nabla A+\frac{1}{\hbar}[I_\alpha,A]_{\star}$ does not have terms living in $\A_X^{\bullet}(\overline{\mathcal{W}}_{X})$.  So we can prove the theorem by an induction on the weights of ``terms of purely anti-holomorphic type'' in $O_{u_k}$.
	
	The terms in $O_{u_k}$ of weight $1$ are given by
	$$
	\frac{\partial^2\rho}{\partial\bar{z}^l\partial z^k}\bar{y}^l=\omega_{k\bar{l}}\bar{y}^l.
	$$
	We know from the above iterative equation that the weight $2$ terms are given by
	$$
	\delta^{-1}\circ\nabla^{0,1}(\omega_{k\bar{l}}\bar{y}^l),
	$$
	which vanish since the Levi-Civita connection is compatible with both the symplectic form and the complex structure. The next terms are 
	\begin{align*}
		  & \delta^{-1}\left(\nabla^{0,1}\left(-\hbar\frac{\partial\rho_1}{\partial z^k}\right)+\frac{1}{\hbar}\left[-\hbar\frac{\partial^2\rho_1}{\partial\bar{z}^n\partial z^m}d\bar{z}^n\otimes y^m,\omega_{k\bar{l}}\bar{y}^l\right]_{\star}\right)\\
		= & \delta^{-1}\left(-\hbar\frac{\partial^2\rho_1}{\partial z^k\partial\bar{z}^l}d\bar{z}^l-\hbar\frac{\partial^2\rho_1}{\partial\bar{z}^n\partial z^m}d\bar{z}^n\omega_{k\bar{l}}\omega^{m\bar{l}}\right)=0.
	\end{align*}
	Thus the weight $3$ terms of purely anti-holomorphic type in $O_{u_k}$ vanish. This argument can be generalized to all such terms of higher weights. 
\end{proof}

\begin{rmk}
These formal functions are generalizations of the holomorphic partial derivatives of the K\"ahler potential $\rho=\sum_i z^i\bar{z}^i$ on the flat space $\C^n$: 
$$f_i=\frac{\partial\rho}{\partial z^i}=\bar{z}^i,$$
which are also quantizable functions.  
\end{rmk}



Given an admissible Fedosov connection $D_\alpha$, as it is only a polynomial in $\hbar$ instead of a formal power series, we can evaluate $D_{\alpha}$ at $\hbar=1/k$ for any non-zero complex number $k\in\C\setminus\{0\}$ and obtain the following \emph{non-formal} flat connection:
$$
D_{\alpha,k}=\nabla-\delta+k\cdot[I_{\alpha,k},-]_{\star_k},
$$
which acts as a differential on the Weyl bundle $\A^\bullet_X\left(\W_{X,\C}\right)$. 
\begin{rmk}
	Although this connection is non-formal (meaning that there is no $\hbar$ involved), we will still call it a Fedosov connection by abuse of notation. 
\end{rmk}

\begin{rmk}
	The fact that the differential $D_{\alpha,k}$ is well-defined relies heavily on the property that the Fedosov connections $D_\alpha$ are quantizations of Kapranov's $L_\infty$ structure \cite{Kapranov}. General Fedosov connections cannot be evaluated at arbitrary non-zero complex values because they are power series in $\hbar$ and there is no convergence in general. 
\end{rmk}

\begin{rmk}
	In general, we add a subscript $k$ in a symbol to denote the evaluation $\hbar=1/k$. For instance, the fiberwise product and the associated bracket in the above formulas are all given by evaluating at the complex value $\hbar=1/k$.
\end{rmk}
Since all terms in $D_\alpha$ (and also its evaluations $D_{\alpha,k}$) increase the polynomial degrees in $\Sym^\bullet\overline{T^*X}$ by $1$, we obtain the following sub-complex:
$$
\left(\Sym^\bullet\overline{T^*X}\otimes\W_X, D_{\alpha,k}\right).
$$
A nice property of this sub-complex is that it is closed under the fiberwise star product $\star_k$. 
We are now ready to define \emph{non-formal} quantizable functions:
\begin{defn}\label{definition: quantizable functions}
	A flat section of $\Sym^\bullet\overline{T^*X}\otimes \W_X$ under the Fedosov connection $D_{\alpha,k}$ is called a {\em (non-formal) quantizable function of level $k$}. It is clear that quantizable functions of level $k$ form a sheaf on $X$, and we let $\mathcal{C}_{\alpha,k}^\infty$ and $C^\infty_{\alpha,k}(X)$ denote this sheaf and the space of global quantizable functions (i.e., global sections of $\mathcal{C}_{\alpha,k}^\infty$) respectively.  
\end{defn}

Now we describe some simple properties of the non-formal Fedosov connections $D_{\alpha,k}$ and the corresponding quantizable functions of level $k$.
First of all, we consider the weight defined by the polynomial degrees of anti-holomorphic terms in $\Sym^\bullet\overline{T^*X}$ and the associated increasing filtration on the bundle $\Sym^\bullet\overline{T^*X}\otimes\W_X$. From the explicit formula of the connection $D_{\alpha,k}$, it is easy to see that it does not increase the polynomial degrees in $\Sym^\bullet\overline{T^*X}$, and thus preserves this increasing filtration. In particular, there is an associated increasing filtration on the non-formal quantizable functions of level $k$. We denote by $(\mathcal{C}_{\alpha,k}^\infty)_{N}$ the subsheaf of quantizable functions whose anti-holomorphic terms in $\W_{X,\C}$ have polynomial degrees at most $N$. 

Similar to formal Fedosov quantization, we obtain a non-formal star product:
\begin{prop}
	For every $k\in\C\setminus\{0\}$, the sheaf $\mathcal{C}_{\alpha,k}^\infty$ is closed under the star product $\star_k$ defined via the Fedosov connection $D_{\alpha,k}$. This star product is compatible with the above filtration in the sense that $(\mathcal{C}_{\alpha,k}^\infty)_{N_1}\star_k (\mathcal{C}_{\alpha,k}^\infty)_{N_2}\subset (\mathcal{C}_{\alpha,k}^\infty)_{N_1+N_2}$.
\end{prop}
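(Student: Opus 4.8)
The plan is to reduce the proposition to two elementary facts about the fiberwise Wick product $\star_k$ restricted to the sub-bundle $\Sym^\bullet\overline{TX}^*\otimes\W_X$: that $D_{\alpha,k}$ is a graded derivation of it, and that it respects the anti-holomorphic-degree filtration. Recall from Definition \ref{definition: quantizable functions} that a level-$k$ quantizable function is simply a $D_{\alpha,k}$-flat section $a$ of $\Sym^\bullet\overline{TX}^*\otimes\W_X$, and — exactly as in Fedosov's theorem — the star product of two such functions is obtained by forming their fiberwise product $a\star_k b$ and observing that it is again a flat section of the sub-bundle (its symbol then being $\sigma(a)\star_k\sigma(b)$ in the Fedosov star product on symbols). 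So it suffices to show: (i) the fiberwise product $a\star_k b$ of two flat sections is flat; and (ii) $a\star_k b$ lies in $\Sym^\bullet\overline{TX}^*\otimes\W_X$ with anti-holomorphic degree bounded by the sum of those of $a$ and $b$.

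For (i) I would use Remark \ref{remark: delta}, which writes $D_\alpha=\nabla+\frac{1}{\hbar}[\gamma_\alpha,-]_\star$. The inner term is automatically a graded derivation of $\star$, and $\nabla$ is a graded derivation of the fiberwise Wick product because the Levi-Civita connection of a \Kahler metric is parallel with respect to both $\omega$ and the complex structure, hence with respect to the tensor $\omega^{i\bar j}$ that defines $\star$ in \eqref{equation: fiberwise-Wick-product} (the same compatibility used in the proof of Proposition \ref{proposition: function-u-k}); this is of course exactly what makes $D_\alpha$ produce a star product in the first place. Consequently $D_\alpha$, and after evaluating $\hbar=1/k$ also $D_{\alpha,k}$, is a graded derivation of $\star_k$. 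Since $a$ and $b$ are of pure form-degree zero, this gives $D_{\alpha,k}(a\star_k b)=(D_{\alpha,k}a)\star_k b+a\star_k(D_{\alpha,k}b)=0$ with no sign, so $a\star_k b$ is $D_{\alpha,k}$-flat — this is precisely the mechanism behind the identity $O_f\star_k O_g=O_{f\star_k g}$, which therefore survives the passage to the admissible non-formal setting.

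For (ii) I would read everything off \eqref{equation: fiberwise-Wick-product} evaluated at $\hbar=1/k$:
$$
a\star_k b=\sum_{j\geq 0}\frac{k^{-j}}{j!}\,\omega^{i_1\bar j_1}\cdots\omega^{i_j\bar j_j}\,\frac{\partial^j a}{\partial y^{i_1}\cdots\partial y^{i_j}}\,\frac{\partial^j b}{\partial\bar y^{j_1}\cdots\partial\bar y^{j_j}}.
$$
If $b$ has anti-holomorphic degree $\leq N_2$, then $\partial^j b/\partial\bar y^{j_1}\cdots\partial\bar y^{j_j}$ vanishes as soon as $j>N_2$, so the sum is finite; this is where the restriction to $\Sym^\bullet\overline{TX}^*\otimes\W_X$ rather than all of $\W_{X,\C}$ matters — it rules out any convergence issue even though $a$ and $b$ may be of infinite order in the holomorphic variables. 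Each surviving term differentiates $a$ only in the $y$'s (so it stays in the completion $\widehat{\Sym}T^*X$) and lowers the $\bar y$-degree of $b$ by exactly $j$; hence its degree in the $\bar y$-variables is $\deg_{\bar y}a+\deg_{\bar y}b-j\leq N_1+N_2$, and it is a section of $\Sym^\bullet\overline{TX}^*\otimes\W_X$. Thus $a\star_k b\in\Sym^\bullet\overline{TX}^*\otimes\W_X$ with anti-holomorphic degree $\leq N_1+N_2$ (this re-proves, in filtered form, the closedness of the sub-complex under $\star_k$ quoted just before Definition \ref{definition: quantizable functions}). Combined with (i), $a\star_k b\in(\mathcal{C}_{\alpha,k}^\infty)_{N_1+N_2}$, which is the filtration statement; letting $N_1,N_2\to\infty$ shows that $\mathcal{C}_{\alpha,k}^\infty$ itself is closed under $\star_k$.

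There is no genuine difficulty here; the one point deserving care is the well-definedness of the fiberwise product on sections of unbounded order in $y$, which — as in the display above — is guaranteed precisely by the finiteness of the anti-holomorphic degree, and this is exactly why quantizable functions are defined as flat sections of $\Sym^\bullet\overline{TX}^*\otimes\W_X$. A secondary bookkeeping point is that the powers $k^{-j}$ in \eqref{equation: fiberwise-Wick-product} form a finite sum, so the substitution $\hbar=1/k$ introduces no divergence and all the formal-Fedosov identities specialize verbatim.
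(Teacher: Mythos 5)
Your proof is correct and follows essentially the same route as the paper, which simply observes that the fiberwise Wick product is compatible with $D_{\alpha,k}$ (so products of flat sections are flat) and that the filtration statement is immediate; you have merely spelled out the derivation property of $D_{\alpha,k}$ and the finiteness/degree count from \eqref{equation: fiberwise-Wick-product} that the paper leaves implicit. No gaps.
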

\begin{proof}
	Since the fiberwise Wick product $\star_k$ is compatible with the connection $D_{\alpha,k}$, the product of two flat sections is still flat. This defines the star product $\star_k$. The statement about the filtration is obvious. 
\end{proof}
We now give some examples of quantizable functions.  

\begin{eg}
	On the flat space $\C^n$ equipped with the standard K\"ahler form, every polynomial in $\C[z^1,\bar{z}^1,\cdots,z^n,\bar{z}^n]$ is a quantizable function. 
\end{eg}

\begin{eg}\label{example: quantizable-function-evaluation}
	Given an admissible Fedosov connection $D_\alpha$, we can construct a class of non-formal quantizable functions of any level $k$ (i.e., flat sections under the non-formal Fedosov connection $D_{\alpha,k}$) by evaluating formal quantizable functions at $\hbar=1/k$. First of all, there exists the following morphism of bundles:
	$$
	 \Sym^\bullet\overline{T^*X}\otimes\W_X[\hbar]\rightarrow \Sym^\bullet\overline{T^*X}\otimes\W_X
	$$
	by taking the evaluation $\hbar=1/k$.  It is easy to see that this map preserves the Wick products and Fedosov connections on both sides. In particular, by taking cohomology with respect to $D_\alpha$ and $D_{\alpha, k}$ respectively, we obtain the following morphism of sheaves of algebras:
	\begin{equation}\label{equation: evaluation-map}
		ev_k: \mathcal{C}^\infty_{q,\hbar}\rightarrow \mathcal{C}^\infty_{\alpha,k},
	\end{equation}
    where $\mathcal{C}_{q,\hbar}^\infty$ denotes the sheaf of formal quantizable functions (Definition \ref{definition: formal-quantizable-functions}).
	 In Example \ref{example: holomorphic-functions}, we have shown that for a (local) holomorphic function $f$, the section $O_f$ is a formal quantizable function. Since it does not contain the formal variable $\hbar$, the evaluation map $ev_k$ does nothing to $O_f$ which makes it a non-formal quantizable function for any level $k$. In particular, the sheaf of quantizable functions gives a quantum $\mathcal{O}_X$-module via left multiplication. 
\end{eg}

In Section \ref{section: quantum-moment-maps}, we will see another class of quantizable functions arising from symmetries of the K\"ahler manifold $X$.

There is a type decomposition by the $1$-forms in $D_{\alpha,k}$, in which the $(1,0)$-component is independent of $k$ and explicitly given by 
$$
D_{\alpha,k}^{1,0}=\nabla^{1,0}-\delta^{1,0}. 
$$
This implies the following
\begin{lem}\label{lemm: flat-section-1-0-components}
Let $\gamma$ be a non-formal quantizable function of level $k$ (i.e., flat section with respect to the Fedosov connection $D_{\alpha,k}$). Then $\gamma$ is determined by its components in $\overline{\W}_X$. 
\end{lem}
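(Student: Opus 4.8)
The plan is to exploit the explicit shape of the $(1,0)$-part of the non-formal Fedosov connection, namely the identity $D_{\alpha,k}^{1,0}=\nabla^{1,0}-\delta^{1,0}$ recorded just above the statement, together with the uniqueness mechanism of Lemma \ref{lemma: flat-section-1-0-comonent}. First I would decompose $\gamma\in\Gamma(U,\Sym^\bullet\overline{TX}^*\otimes\W_X)$ by holomorphic Weyl degree, writing $\gamma=\sum_{p\geq 0}\gamma_p$ with $\gamma_p\in\Gamma(U,\Sym^p T^*X\otimes\Sym^\bullet\overline{TX}^*)$, so that $\pi_{0,*}(\gamma)=\gamma_0$ is exactly "the component in $\overline{\W}_X$". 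Since flatness under $D_{\alpha,k}$ forces in particular $D_{\alpha,k}^{1,0}\gamma=0$, i.e. $(\nabla^{1,0}-\delta^{1,0})\gamma=0$, and since $\delta^{1,0}$ lowers holomorphic Weyl degree by one while $\nabla^{1,0}$ preserves it, extracting the $\Sym^p T^*X$-component of this equation gives the recursion $\delta^{1,0}\gamma_{p+1}=\nabla^{1,0}\gamma_p$ for all $p\geq 0$, together with $\delta^{1,0}\gamma_0=0$ (automatic, as $\gamma_0$ has no holomorphic legs).

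Next I would run the now-standard Fedosov-type recovery argument: using the homotopy identity \eqref{equation: delta-1-0-and-inverse}, $\mathrm{id}-\pi_{0,*}=\delta^{1,0}(\delta^{1,0})^{-1}+(\delta^{1,0})^{-1}\delta^{1,0}$, I claim that $\gamma_{p+1}=(\delta^{1,0})^{-1}(\nabla^{1,0}\gamma_p)=(\tilde\nabla^{1,0})\gamma_p$. Indeed, apply $(\delta^{1,0})^{-1}$ to $\delta^{1,0}\gamma_{p+1}=\nabla^{1,0}\gamma_p$ and use the homotopy identity on the left; the term $\pi_{0,*}\gamma_{p+1}$ vanishes for $p+1\geq 1$ since $\gamma_{p+1}$ has positive holomorphic Weyl degree, and the term $(\delta^{1,0})^{-1}\delta^{1,0}\gamma_{p+1}$ is what we want, while on the right $(\delta^{1,0})^{-1}\nabla^{1,0}\gamma_p=\tilde\nabla^{1,0}\gamma_p$ by definition. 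Hence $\gamma_p=(\tilde\nabla^{1,0})^p(\gamma_0)$ for every $p$, so $\gamma=\sum_{p\geq 0}(\tilde\nabla^{1,0})^p(\gamma_0)$ is completely determined by $\gamma_0=\pi_{0,*}(\gamma)\in\Gamma(U,\overline{\W}_X)$. (This is precisely the $\tilde\alpha$ produced by Lemma \ref{lemma: flat-section-1-0-comonent} applied degree-by-degree to $\gamma_0$, which also re-proves well-definedness: the resulting section is automatically annihilated by $\nabla^{1,0}-\delta^{1,0}$.)

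The only genuinely delicate point — and the step I expect to be the main obstacle — is bookkeeping the convergence/finiteness of the sum $\sum_{p\geq 0}(\tilde\nabla^{1,0})^p(\gamma_0)$ in the $\hbar$-less setting. In the formal theory this is harmless because one works in $\widehat{\Sym}$, but here $\gamma$ is assumed to lie in $\Sym^\bullet\overline{TX}^*\otimes\W_X$ with $\W_X=\widehat{\Sym}T^*X$ still a completed symmetric algebra in the holomorphic directions, so the infinite sum over holomorphic Weyl degree $p$ is again just an element of the completion and there is no analytic convergence issue — one only needs to note that $\tilde\nabla^{1,0}=(\delta^{1,0})^{-1}\nabla^{1,0}$ raises holomorphic Weyl degree by exactly one and does not alter the anti-holomorphic degree, so each homogeneous component of $\gamma$ is determined by a finite expression in $\gamma_0$. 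Care should be taken that $(\delta^{1,0})^{-1}$ carries the correct normalization factor $1/(p_1+p_2)$, which is nonzero on every component appearing in the recursion (here $p_1=0$ since we are looking at Weyl-valued functions, not forms, and $p_2=p+1\geq 1$), so the recursion is never divided by zero. With these finiteness remarks in place the statement follows, and I would phrase the conclusion as: the map $\gamma\mapsto\pi_{0,*}(\gamma)$ from $D_{\alpha,k}$-flat sections to $\Gamma(U,\overline{\W}_X)$ is injective, with explicit left inverse $\gamma_0\mapsto\sum_{p\geq 0}(\tilde\nabla^{1,0})^p(\gamma_0)$.
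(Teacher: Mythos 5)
Your proof is correct and is essentially the paper's argument: both rest on the identity $D_{\alpha,k}^{1,0}=\nabla^{1,0}-\delta^{1,0}$ and recover the flat section degree by degree as $\gamma=\sum_{p\geq 0}(\tilde{\nabla}^{1,0})^p(\gamma_{0,*})$, the paper packaging the same recursion via Lemma \ref{lemma: flat-section-1-0-comonent} together with a lowest-$\W_X$-degree comparison of $\gamma$ with the reconstructed section. The only point worth adding is that when you invoke the homotopy identity \eqref{equation: delta-1-0-and-inverse} the third term $\delta^{1,0}\circ(\delta^{1,0})^{-1}(\gamma_{p+1})$ must also be seen to vanish, which it does because $\gamma_{p+1}$ is a $0$-form and $(\delta^{1,0})^{*}$ acts by contraction against form indices.
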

\begin{proof}
	Let $\gamma_{0,*}$ denote the components of $\gamma$ in $\overline{\W}_X$. By Lemma \ref{lemma: flat-section-1-0-comonent}, the section 
	$$
	\tilde{\gamma}:=\sum_{k\geq 0}(\tilde{\nabla}^{1,0})^k(\gamma_{0,*})
	$$
	must be annihilated by $D_{\alpha,k}^{1,0}$. Thus 
	$$
	D_{\alpha,k}^{1,0}\left(\gamma-\tilde{\gamma}\right)=0.
	$$
	According to the construction,  $(\gamma-\tilde{\gamma})_{0,*}=0$. Suppose $\gamma-\tilde{\gamma}\not=0$. Then the terms in $D_{\alpha,k}^{1,0}\left(\gamma-\tilde{\gamma}\right)$ of the lowest polynomial degree in $\W_X$ are equal to $\delta^{1,0}\left(\gamma-\tilde{\gamma}\right)$ and cannot vanish, which is a contradiction. 
\end{proof}

In formal Fedosov quantization, there is a stronger statement, namely, a flat section is uniquely determined by its symbol. This gives the one-to-one correspondence between flat sections and formal smooth functions in Theorem \ref{theorem: original Fedosov}. It is natural to ask if this still holds for non-formal Fedosov connections and quantizable functions. However, the following example gives a negative answer to this question.
\begin{eg}
We consider the Fedosov connection $D_\alpha$ where $\alpha=\hbar\cdot\omega$. The term $J_\alpha$ in this connection is explicitly given by
\begin{align*}
J_\alpha=-\hbar\cdot\sum_{l\geq 0}(\tilde{\nabla}^{1,0})^k\left(\omega_{i\bar{j}}d\bar{z}^j\otimes y^i\right)
=-\hbar\cdot\omega_{i\bar{j}}d\bar{z}^j\otimes y^i.
\end{align*}
If we take the evaluation $\hbar=1$, then the non-formal Fedosov connection can be written explicitly as 
$$
D_{\alpha,k=1}=\nabla-\delta^{1,0}+ [I,-]_{\star_k}.
$$ 
We now construct a non-trivial flat section whose symbol actually vanishes. We claim that the local section $\omega_{i\bar{m}}\bar{y}^m$ is flat under $D_{\alpha,k=1}$. Firstly, 
$$
D_{\alpha,k=1}^{1,0}(\omega_{i\bar{m}}\bar{y}^m)=\nabla^{1,0}(\omega_{i\bar{m}}\bar{y}^m)-\delta^{1,0}(\omega_{i\bar{m}}\bar{y}^m)=0.
$$
On the other hand, the fact that $\nabla^{0,1}(\omega_{i\bar{m}}\bar{y}^m)=0$ implies the vanishing 
$$
D_{\alpha,k=1}^{0,1}(\omega_{i\bar{m}}\bar{y}^m)=0. 
$$
Thus $\omega_{i\bar{m}}\bar{y}^m$ is a local quantizable function whose symbol vanishes. 
\end{eg}

It seems then that the name ``quantizable functions'' might not be so appropriate, since the symbol of a non-trivial flat section might vanish. This name can be justified in two ways. First of all, for a large enough $k$, the above uniqueness statement remains true:
\begin{prop}\label{proposition: uniqueness-flat-section-k-big-enough}
For $|k|>>0$, a flat section $\gamma$ under the Fedosov connection $D_{\alpha,k}$ is uniquely determined by its symbol $\sigma(\gamma)$. 
\end{prop}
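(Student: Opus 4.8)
The plan is to reduce the question to a first–order linear ODE on a finite–rank bundle and play it off against a degree–lowering recursion that carries inverse powers of $k$.

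By linearity it suffices to show that for $|k|\gg 0$ a flat section $\gamma$ of $\left(\Sym^\bullet\overline{TX}^*\otimes\W_X,\ D_{\alpha,k}\right)$ over an open $U$ with $\sigma(\gamma)=0$ must vanish. Such a $\gamma$ lies in a finite filtration step, say of anti-holomorphic degree $\le Q$. By Lemma \ref{lemm: flat-section-1-0-components} and the argument of Lemma \ref{lemma: flat-section-1-0-comonent}, the $(1,0)$-part of flatness forces $\gamma=\sum_{j\ge 0}(\tilde{\nabla}^{1,0})^j\gamma_{0,*}$ with $\gamma_{0,*}:=\pi_{0,*}\gamma\in\Gamma\!\left(U,(\overline{\W}_X)_{\le Q}\right)$, and $\sigma(\gamma)$ is exactly the degree-$0$ component $\gamma_{0,0}$ of $\gamma_{0,*}$. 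So the problem becomes: $\gamma_{0,0}=0\Rightarrow\gamma_{0,*}=0$ for $|k|\gg 0$. Applying $\pi_{0,*}$ to $D_{\alpha,k}\gamma=0$, using \eqref{equation: delta-1-0-and-inverse}, the fact that $\nabla^{1,0},\nabla^{0,1},\delta^{1,0},\delta^{0,1}$ are compatible with the $\W_X$-grading and $\pi_{0,*}$, and that $\delta^{1,0}\tilde{\nabla}^{1,0}\gamma_{0,*}=0$ (since $\nabla^{1,0}\gamma_{0,*}$ is already of $\W_X$-degree $0$, so $\delta^{1,0}$ kills it), the $(1,0)$-part collapses to $\nabla^{1,0}\gamma_{0,*}=0$ and the $(0,1)$-part becomes
$$
\nabla^{0,1}\gamma_{0,*}=\delta^{0,1}\gamma_{0,*}+\tfrac1k\,E_k(\gamma_{0,*}),
$$
where $E_k$ is a \emph{fiberwise} (non-differential) bundle operator of finite order, lowering the anti-holomorphic degree by at least one, and bounded in every $C^l$ uniformly for $|k|\ge 1$. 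This is where the special features of $D_\alpha$ enter: the order-zero term of the fiberwise Wick product cancels in the commutator $k[I_{\alpha,k},-]_{\star_k}$; its order-$m$ terms carry the factor $k^{1-m}$; admissibility makes $I$ ($\hbar$-independent, and \emph{uniformly linear in $\overline{\W}_X$}, so $\partial_{\bar y}^{\ge 2}I=0$) and $J_\alpha$ genuine polynomials; and $\alpha\in\hbar\,\A^{1,1}(X)[\hbar]$ gives $J_{\alpha,k}=O(1/k)$, so that the $\W_X$-degree-$0$ part of $k[I_{\alpha,k},\gamma]_{\star_k}$ depends only on $\gamma_{0,*}$ and is $O(1/k)$.

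Now combine the two equations. Together they read $\nabla\gamma_{0,*}=F_k\gamma_{0,*}$ for a fiberwise connection form $F_k$ on the finite-rank bundle $(\overline{\W}_X)_{\le Q}$, bounded together with all derivatives uniformly for $|k|\ge 1$; hence $\gamma_{0,*}$ is parallel for $\nabla-F_k$, and on any coordinate ball $U'\Subset U$ with $\overline{U'}$ compact one gets $\|\gamma_{0,*}\|_{C^l(U')}\le C_l\,\|\gamma_{0,*}\|_{C^0(\overline{U'})}$ with $C_l$ independent of $k$. On the other hand, decomposing the $(0,1)$-equation by anti-holomorphic degree and using $\gamma_{0,0}=0$, the degree-$q$ equation expresses $\gamma_{0,q+1}$ as $(\delta^{0,1})^{-1}$ applied to $\nabla^{0,1}\gamma_{0,q}$ plus an $O(1/k)$ fiberwise combination of $\gamma_{0,q+1},\dots,\gamma_{0,Q}$; iterating from $q=0$, where the $\nabla^{0,1}\gamma_{0,0}$ term vanishes, yields for all $l$
$$
\max_q\|\gamma_{0,q}\|_{C^l(U')}\ \le\ \frac{C}{|k|}\,\max_q\|\gamma_{0,q}\|_{C^{l+Q-1}(U')}.
$$
Feeding in the ODE bound at $l=0$ gives $\|\gamma_{0,*}\|_{C^0(U')}\le\frac{CC_{Q-1}}{|k|}\|\gamma_{0,*}\|_{C^0(\overline{U'})}$, so $\gamma_{0,*}\equiv 0$ on $U'$ once $|k|>CC_{Q-1}$, a threshold depending only on $Q$, on $\alpha$, and on the geometry near $U'$. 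Covering $X$ by such balls gives $\gamma_{0,*}=0$, hence $\gamma=0$; this is also consistent with the preceding example, which is a genuinely small-$k$ (there $k=1$) resonance.

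The main obstacle is the one-derivative loss in the anti-holomorphic-degree recursion: on its own it only gives $\max_q\|\gamma_{0,q}\|_{C^l}\le\frac{C}{|k|}\max_q\|\gamma_{0,q}\|_{C^{l+Q-1}}$, which does not close because the $C^l$-norms of an arbitrary smooth section carry no $k$-uniform growth bound in $l$. The role of extracting the first-order ODE $\nabla\gamma_{0,*}=F_k\gamma_{0,*}$ is precisely to replace that uncontrolled tower of $C^l$-norms by a single $C^0$-quantity with $k$-independent constants, and the technical core of the proof is verifying the structure of that ODE — in particular that $\pi_{0,*}\circ k[I_{\alpha,k},-]_{\star_k}$ factors through $\gamma_{0,*}$, is fiberwise, and is $O(1/k)$.
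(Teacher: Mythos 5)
There is a genuine gap at the step where you extract the $(1,0)$-half of your ODE. You claim that applying $\pi_{0,*}$ to $D_{\alpha,k}\gamma=0$ yields $\nabla^{1,0}\gamma_{0,*}=0$, justified by the assertion $\delta^{1,0}\tilde{\nabla}^{1,0}\gamma_{0,*}=0$. That assertion is false: $\tilde{\nabla}^{1,0}\gamma_{0,*}=(\delta^{1,0})^{-1}\nabla^{1,0}\gamma_{0,*}$ has $\W_X$-degree $1$ (the operator $(\delta^{1,0})^{-1}$ trades the $dz$ for a $y$), and since $\delta^{1,0}\left(\nabla^{1,0}\gamma_{0,*}\right)=0$ and $\pi_{0,*}\left(\nabla^{1,0}\gamma_{0,*}\right)=0$, the homotopy identity \eqref{equation: delta-1-0-and-inverse} gives $\delta^{1,0}\tilde{\nabla}^{1,0}\gamma_{0,*}=\nabla^{1,0}\gamma_{0,*}$. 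This is precisely the cancellation used in the proof of Lemma \ref{lemma: flat-section-1-0-comonent}, which is why \emph{every} $\gamma_{0,*}$ extends to a $D^{1,0}_{\alpha,k}$-closed section: the $\W_X$-degree-$0$ part of the $(1,0)$-component of the flatness equation is the identity $0=0$ and carries no information about $\gamma_{0,*}$. The equation $\nabla^{1,0}\gamma_{0,*}=0$ also fails for honest flat sections: for the evaluation of $O_{u_i}$ from Proposition \ref{proposition: function-u-k} one has $\gamma_{0,*}=u_i+\omega_{i\bar{m}}\bar{y}^m$, whose $(1,0)$-covariant derivative is $\partial u_i$, generically nonzero; and since your derivation never used $\sigma(\gamma)=0$, restricting to zero-symbol sections does not repair it.

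This is fatal for the scheme as written, by your own diagnosis: without the full first-order system $\nabla\gamma_{0,*}=F_k\gamma_{0,*}$ you cannot convert the degree-lowering recursion, which loses one derivative per step, into a closed $C^0$ estimate with $k$-independent constants, because the $(0,1)$-equation alone controls only anti-holomorphic derivatives of $\gamma_{0,*}$. (There is also a secondary issue of making the threshold for $|k|$ uniform over a cover when $X$ is noncompact.) Moreover, the analytic machinery is unnecessary: the paper's proof is purely algebraic and pointwise. It takes the difference $\xi$ of two flat sections with equal symbol, uses Lemma \ref{lemm: flat-section-1-0-components} to reduce to $\xi_{0,*}$, looks at its top anti-holomorphic degree $l\geq 1$, and observes that the relevant component of $D^{0,1}_{\alpha,k}(\xi)$ has the form $\left(-\delta^{0,1}+O(1/k)\right)(\xi_{0,l})$, where $\delta^{0,1}$ is fiberwise injective on positive anti-holomorphic degree and the $O(1/k)$ correction is the fiberwise bracket with $I_\alpha$, small for exactly the structural reasons you correctly isolate (order-$m$ Wick terms carry $k^{1-m}$, $J_\alpha=O(1/k)$, $I$ has holomorphic degree $\geq 2$). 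For $|k|$ large this component cannot vanish, contradicting flatness; no norms of derivatives ever enter, and no $C^l$ bootstrap is needed.
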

\begin{proof}
	Suppose $\gamma$ and $\tilde{\gamma}$ are two flat sections under $D_{\alpha,k}$ with the same symbol, such that $\xi=\gamma-\tilde{\gamma}$ is non-trivial.  By Lemma \ref{lemm: flat-section-1-0-components}, $\xi$ is determined by $\xi_{0,*}$. Let $\xi_{0,l}$ be the term in $\xi_{0,*}$ of the highest polynomial degree in $\Sym^\bullet\overline{T^*X}$. Then we have $l>0$, since otherwise, $\sigma(\xi)$ will involve a term of polynomial degree $(0,0)$, i.e., a holomorphic function, which contradicts the fact that $\gamma$ and $\tilde{\gamma}$ have the same symbols. 
	
	Let $\varphi$ be a potential of $\alpha$, i.e., $\partial\bar{\partial}\varphi=\alpha$. Recall that the term $J_\alpha$ in the Fedosov connection $D_\alpha$ is given by $-\sum_{m\geq 1}(\tilde{\nabla}^{1,0})^m(\bar{\partial}\varphi)$. An explicit computation gives that the term in $D_{\alpha,k}^{0,1}(\xi)$ of type $\A_X^{0,1}(\Sym^{l-1}\overline{T^*X})$ can be written as
	$$
	-\delta^{0,1}(\xi_{0,l})-[\tilde{\nabla}^{1,0}\left(\bar{\partial}\varphi\right)_{\hbar=1/k},\xi_{0,1}]_{\star_k}=\left(-\delta^{0,1}+O\left(\frac{1}{k}\right)\right)(\xi_{0,1}).
	$$
	Since the leading term is non-zero, if $|k|$ is sufficiently large, we obtain the non-vanishing of $D_{\alpha,k}^{0,1}(\xi)$, which contradicts the flatness of $\xi$.
\end{proof}

Therefore, for $|k|>>0$, non-formal quantizable functions of level $k$ are in a one-to-one correspondence with a sub-class of smooth functions in $C^\infty(X)$, which, by abuse of notation, will also be called \emph{non-formal quantizable functions of level $k$}. The Fedosov connection $D_{\alpha,k}$ defines a non-formal deformation of this sub-class of smooth functions. Moreover, these functions are, by construction, ``close enough'' to holomorphic ones since their associated flat sections have finite polynomial degrees in $\overline{T^*X}$. We will see in the next section that these functions form a sheaf of twisted holomorphic differential operators. 

The second justification is that, as we will show in Corollary \ref{corollary: evaluation-surjective}, the evaluation map \eqref{equation: evaluation-map} is sheaf-theoretically surjective. In other words, for any level $k$, every quantizable function of level $k$ can be locally obtained from a formal quantizable function.

\section{Sheaves of twisted differential operators from quantizable functions}\label{section: TDO}
We first give a brief review of the notion of twisted differential operators, following the notes by Ginzburg \cite{Ginzburg}.

\begin{defn}
	A filtered ring $A$ is called {\em almost commutative} if $gr A$ is commutative.
\end{defn}

\begin{defn}
A \emph{sheaf of twisted differential operators} (TDO for short) on $X$ is a positively filtered sheaf $\mathcal{D}$ of almost commutative algebras together with an isomorphism
$$
\psi_{\mathcal{D}}:gr\mathcal{D}\rightarrow \Sym^*_{\mathcal{O}_X} \mathcal{T}X
$$
of Poisson algebras, where $\mathcal{T}X$ denotes the holomorphic tangent sheaf on $X$. 
\end{defn}

\begin{rmk}
	The Poisson bracket on $\Sym^*_{\mathcal{O}_X}\mathcal{T}X$ is the natural extension of the Lie bracket on $\mathcal{T}X$. In particular, we have the maps
	$$
	\{-,-\}: \Sym^m_{\mathcal{O}_X}\mathcal{T}X\times \Sym^n_{\mathcal{O}_X}\mathcal{T}X\rightarrow \Sym^{m+n-1}_{\mathcal{O}_X}\mathcal{T}X.
	$$
\end{rmk}

The main result of this section is that the sheaf $\mathcal{C}_{\alpha,k}^\infty$ of non-formal quantizable functions of level $k$ defines a TDO. 

\subsection{A filtration on quantizable functions}
\


Consider the natural increasing filtration $(\mathcal{C}_{\alpha,k}^\infty)_0\subset(\mathcal{C}_{\alpha,k}^\infty)_1\subset\cdots$ on $\mathcal{C}_{\alpha,k}^\infty$ by polynomial degrees of anti-holomorphic terms in $\W_{X,\C}$. Let $gr\ \mathcal{C}^\infty_{\alpha,k}$ denote the associated graded sheaf.  Suppose $\alpha$ is a local flat section under $D_{\alpha,k}$ which lives in $(gr\ \mathcal{C}^\infty_{\alpha,k})_N(U)$. Let $\alpha_N\in(\W_{X,\C})_N$ denote the leading term with respect to this filtration. The proof of Lemma \ref{lemma: flat-section-1-0-comonent} says that it must be of the form
$$
\alpha_N=\sum_{k\geq 0}(\tilde{\nabla}^{1,0})^k(\alpha_{0,N}),
$$
and the following vanishing holds:
$$
\nabla^{0,1}(\alpha_{0,N})=0.
$$
\begin{prop}\label{proposition: associated-graded-of-quantizable-functions}
	For any $N\geq 0$, there is the following isomorphism of $\mathcal{O}_X$-modules: 
	\begin{align*}
		\psi:(gr\ \mathcal{C}^\infty_{\alpha,k})_N\rightarrow& \Sym^N_{\mathcal{O}_X} \mathcal{T}X\\
		\alpha\mapsto& \alpha_{0,N}\lrcorner(\omega^{-1})^N
	\end{align*}
\end{prop}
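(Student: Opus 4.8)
The plan is to verify that $\psi$ is a well-defined isomorphism of sheaves of $\mathcal{O}_X$-modules, where the anti-holomorphic leading term $\alpha_{0,N}\in\Sym^N\overline{T^*X}$ is contracted against $(\omega^{-1})^N\in\Sym^N(TX\otimes\overline{TX})$ to produce an element of $\Sym^N\mathcal{T}X$. First I would check well-definedness: given $\alpha\in(\mathcal{C}^\infty_{\alpha,k})_N(U)$, Lemma~\ref{lemma: flat-section-1-0-comonent} together with Lemma~\ref{lemm: flat-section-1-0-components} shows that $\alpha$ is recovered from its purely anti-holomorphic part $\alpha_{0,*}$, and the leading-degree piece $\alpha_{0,N}\in\Gamma(U,\Sym^N\overline{T^*X})$ satisfies $\nabla^{0,1}(\alpha_{0,N})=0$, exactly as recorded in the paragraph preceding the proposition. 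Since $\nabla^{0,1}$ is (up to sign) the $\bar\partial$-operator on the holomorphic bundle $\Sym^N\overline{T^*X}$ (whose underlying holomorphic structure is that of $\Sym^N\overline{\mathcal{T}X}$... here one must be careful: $\nabla^{0,1}=\bar\partial$ on sections of $\overline{TX}$ in the anti-holomorphic sense), the contraction $\alpha_{0,N}\lrcorner(\omega^{-1})^N$ lands in holomorphic sections of $\Sym^N\mathcal{T}X$ because $\omega^{-1}$ is a parallel section for $\nabla$; this is the content of the claim that the map is into $\Sym^N\mathcal{T}X$, and it uses the Kähler identity $\nabla\omega=0$ in an essential way. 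Moreover the assignment $\alpha\mapsto\alpha_{0,N}$ only depends on the class of $\alpha$ in $\widetilde{(\mathcal{C}^\infty_{\alpha,k})}_N$ since any element of the lower filtration piece $(\mathcal{C}^\infty_{\alpha,k})_{N-1}$ has vanishing degree-$N$ anti-holomorphic part.

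Next I would establish that $\psi$ is $\mathcal{O}_X$-linear: multiplication by a holomorphic function $f$ acts on a flat section by the star product $\star_k$ with $O_f\in\W_X$ (Example~\ref{example: holomorphic-functions}), and since $O_f$ is purely holomorphic the Wick product \eqref{equation: fiberwise-Wick-product} contributes no $\hbar$-corrections that touch the anti-holomorphic generators; hence the leading anti-holomorphic term of $f\star_k\alpha$ is $f\cdot\alpha_{0,N}$, and contraction with $(\omega^{-1})^N$ is $\mathcal{O}_X$-linear. For injectivity: if $\alpha_{0,N}\lrcorner(\omega^{-1})^N=0$ then $\alpha_{0,N}=0$ (the contraction with the nondegenerate $\omega^{-1}$ is a fiberwise isomorphism $\Sym^N\overline{T^*X}\xrightarrow{\sim}\Sym^N\mathcal{T}X$ of the underlying smooth bundles), so $\alpha\in(\mathcal{C}^\infty_{\alpha,k})_{N-1}$ and is zero in the associated graded. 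For surjectivity: given a holomorphic $N$-vector field, i.e.\ a local section $v$ of $\Sym^N\mathcal{T}X$ with $\bar\partial v=0$, set $\alpha_{0,N}:=v\lrcorner\,\omega^{N}$ (the inverse contraction), which is $\nabla^{0,1}$-closed by $\nabla\omega=0$; then form $\alpha_N:=\sum_{k\geq0}(\tilde\nabla^{1,0})^k(\alpha_{0,N})$, which by Lemma~\ref{lemma: flat-section-1-0-comonent} is $D_{\alpha,k}^{1,0}$-closed, and I would correct it by adding lower-filtration terms to make it genuinely $D_{\alpha,k}$-flat. This last correction step is where I expect the real work to lie.

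The main obstacle is precisely showing that the candidate leading term $\alpha_N$ extends to a genuine flat section, i.e.\ that the obstruction to solving $D_{\alpha,k}\gamma=0$ with prescribed leading symbol $\alpha_N$ vanishes. The strategy would be a standard Fedosov-style iteration: write $\gamma=\sum_{j\leq N}\gamma_j$ with $\gamma_N=\alpha_N$ and solve for the lower pieces recursively using the homotopy $\delta^{-1}$, exploiting that $D_{\alpha,k}$ preserves the anti-holomorphic filtration (as noted just before Definition~\ref{definition: quantizable functions}) so that the degree-$N$ anti-holomorphic part is never disturbed by the corrections. The key input making the iteration terminate and the obstruction vanish is that $D_{\alpha,k}^{1,0}=\nabla^{1,0}-\delta^{1,0}$ is $k$-independent and that $D_{\alpha,k}^2=0$ (flatness of the Fedosov connection, Theorem~\ref{theorem: Fedosov-connection}); concretely, $D_{\alpha,k}^{0,1}(\alpha_N)$ is $D_{\alpha,k}^{1,0}$-closed of strictly lower anti-holomorphic degree by the Bianchi-type identity coming from $D_{\alpha,k}^2=0$, so by the contractibility in Lemma~\ref{lemma: flat-section-1-0-comonent} it is $D_{\alpha,k}^{1,0}$-exact and can be cancelled. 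One should also confirm the whole construction is independent of coordinates and hence glues to a map of sheaves — this is automatic once $\psi$ is described invariantly as "leading anti-holomorphic symbol, contracted with $(\omega^{-1})^N$", since both operations are manifestly intrinsic.
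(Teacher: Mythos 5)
Your treatment of well-definedness, $\mathcal{O}_X$-linearity and injectivity matches the paper's (very brief) assertions and is fine. The genuine gap is in surjectivity, which is the substantive content of the proposition, and the key step of your recursion is incorrect as stated. First, $D^{0,1}_{\alpha,k}$ does \emph{not} strictly lower the anti-holomorphic degree: $\nabla^{0,1}$ and the single-contraction part of $k[I,-]_{\star_k}$ preserve it (only $\delta^{0,1}$, the higher contractions with $I$, and the bracket with $J_\alpha$ lower it). The vanishing $\nabla^{0,1}\alpha_{0,N}=0$ only controls the purely anti-holomorphic component of the degree-preserving part of $D^{0,1}_{\alpha,k}(\alpha_N)$; showing that this whole degree-preserving part vanishes already needs an argument (it is $D^{1,0}_{\alpha,k}$-closed with vanishing purely anti-holomorphic part, hence zero by the uniqueness mechanism behind Lemma \ref{lemma: flat-section-1-0-comonent}), which you do not give. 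Second, ``$D^{1,0}_{\alpha,k}$-exact and can be cancelled'' cannot be the mechanism: a $D^{1,0}_{\alpha,k}$-closed element of $\A_X^{0,1}(\W_{X,\C})$ is never $D^{1,0}_{\alpha,k}$-exact for bidegree reasons, since $D^{1,0}_{\alpha,k}$ raises the $(1,0)$-form degree. What your iteration actually requires at each lower filtration level $j$ is the solvability of an inhomogeneous $\bar{\partial}$-type equation for the purely anti-holomorphic part in $\Sym^j\overline{T^*X}\cong\Sym^j\mathcal{T}X$, with the integrability condition extracted from $D_{\alpha,k}^2=0$; this is exactly the step you flag as ``where the real work lies'', it is only \emph{locally} solvable (which suffices for sheaf surjectivity, but must be said and proved), and it is missing from the proposal.

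The paper avoids this entirely by exhibiting explicit generators: by Proposition \ref{proposition: function-u-k} the locally defined formal functions $u_j$ are quantizable with purely anti-holomorphic part $u_j+\omega_{j\bar{m}}\bar{y}^m$, so after the evaluation map of Example \ref{example: quantizable-function-evaluation} one gets $ev_k(O_{u_j})\in(\mathcal{C}^\infty_{\alpha,k})_1$ with $\psi(ev_k(O_{u_j}))$ equal to the frame vector field dual to $y^j$ (i.e.\ $\partial_{z^j}$). Since $\star_k$ is compatible with the filtration and $O_f$ has no anti-holomorphic terms for $f$ holomorphic, products of the form $O_f\star_k ev_k(O_{u_{j_1}})\star_k\cdots\star_k ev_k(O_{u_{j_N}})$ have leading term $f\,\partial_{z^{j_1}}\cdots\partial_{z^{j_N}}$, and these generate $\Sym^N\mathcal{T}X$ locally as an $\mathcal{O}_X$-module, giving surjectivity in two lines. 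To salvage your direct approach you would need to carry out the Dolbeault-lemma and Bianchi-identity bookkeeping sketched above; the paper's route replaces all of it with the already-established Proposition \ref{proposition: function-u-k}.
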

\begin{proof}
	We have seen that $\nabla^{0,1}(\alpha_N)=0$. Since $\omega$ is parallel with respect to $\nabla$, it follows that the image $\alpha_{0,N}\lrcorner(\omega^{-1})^N$ is $\bar{\partial}$-closed. Since the flat section corresponding to a holomorphic function does not contain any terms in $\overline{\W}_X$, the map $\psi$ is a morphism of $\mathcal{O}_X$-modules. 
	
	Next we show that $\psi$ is an isomorphism. The injectivity of $\psi$ is obvious. To show its surjectivity, we consider the formal functions $u_j$'s in Proposition \ref{proposition: function-u-k}. From the explicit formula in the proposition, we have
	$ev_k(O_{u_j})\in (gr\ \mathcal{C}^\infty_{\alpha,k})_1$.
	Moreover, the leading term of $ev_k(O_{u_j})$ is $\omega_{j\bar{l}}\bar{y}^l$, which implies that $\psi(ev_k(O_{u_j}))=\partial_{y^j}$.
	This finishes the proof of this proposition since $\Sym^N_{\mathcal{O}_X}\mathcal{T}X$ is locally generated by $\partial_{y^j}$'s as an $\mathcal{O}_X$-module. 
\end{proof}
\begin{cor}\label{corollary: evaluation-surjective}
The evaluation map \eqref{equation: evaluation-map} in Example \ref{example: quantizable-function-evaluation} is surjective as a morphism of sheaf of algebras. 
\end{cor}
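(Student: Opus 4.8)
The plan is to deduce surjectivity of the evaluation map of sheaves of algebras from the surjectivity established at the level of associated graded sheaves in Proposition \ref{proposition: associated-graded-of-quantizable-functions}, together with the fact that the evaluation map respects the filtrations on both sides. First I would observe that $\mathcal{C}^\infty_q[\hbar]$ carries the increasing filtration by degrees of anti-holomorphic terms in $\W_{X,\C}$, that the evaluation map $ev_k$ sends the $N$-th filtered piece into $(\mathcal{C}^\infty_{\alpha,k})_N$, and that $ev_k$ descends to a morphism of the associated graded sheaves $\widetilde{ev_k}\colon \widetilde{\mathcal{C}^\infty_q}[\hbar] \to \widetilde{\mathcal{C}^\infty_{\alpha,k}}$. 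Since surjectivity of sheaf morphisms is a local (stalkwise) property, it suffices to work on a small open $U\subset X$ carrying holomorphic coordinates.

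The key input is that, in the proof of Proposition \ref{proposition: associated-graded-of-quantizable-functions}, the surjectivity of $\psi$ was witnessed precisely by the images $ev_k(O_{u_j})$ of the \emph{formal} quantizable functions $u_j$ of Proposition \ref{proposition: function-u-k}: one has $\psi\bigl(ev_k(O_{u_j})\bigr)=\partial_{y^j}$, and more generally arbitrary $\star_k$-products of the $ev_k(O_{u_j})$ together with $ev_k$-images of (local) holomorphic functions $f$ (which are quantizable by Example \ref{example: holomorphic-functions}) realize, under $\psi$, an arbitrary local generator of $\Sym^N\mathcal{T}X$ as an $\mathcal{O}_X$-module. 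Because the evaluation map is a morphism of algebras, any such $\star_k$-product lies in the image of $ev_k$. Hence $\widetilde{ev_k}$ is surjective in each degree $N$.

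It then remains to pass from surjectivity on the associated graded to surjectivity of $ev_k$ itself. I would argue by induction on the filtration degree: given a local quantizable function $\gamma\in(\mathcal{C}^\infty_{\alpha,k})_N(U)$, Proposition \ref{proposition: associated-graded-of-quantizable-functions} produces a local section $\eta\in \mathcal{C}^\infty_q[\hbar](U)$ of filtration degree $\le N$ with the same leading symbol $\alpha_{0,N}\lrcorner(\omega^{-1})^N$ as $\gamma$; then $\gamma - ev_k(\eta)$ lies in $(\mathcal{C}^\infty_{\alpha,k})_{N-1}(U)$ (the subtraction is legitimate since both terms are flat sections, hence so is the difference, and its anti-holomorphic degree has dropped), and we conclude by the inductive hypothesis, the base case $N=0$ being the surjectivity of $ev_k$ on holomorphic functions from Example \ref{example: holomorphic-functions}. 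The main point requiring care is the compatibility of the two filtrations with $ev_k$ and the verification that the lift $\eta$ can indeed be chosen of filtration degree $\le N$ rather than merely mapping into the correct graded piece — this is exactly what the explicit description via the $u_j$'s and holomorphic functions in Proposition \ref{proposition: function-u-k} supplies, so no genuine obstacle arises once that bookkeeping is done.
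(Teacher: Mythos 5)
Your proposal is correct and follows essentially the same route as the paper: an induction on the anti-holomorphic filtration, using Proposition \ref{proposition: associated-graded-of-quantizable-functions} to match the leading term of a given quantizable function by $\star_k$-products of $ev_k$-images of holomorphic functions and the formal functions $u_j$ from Proposition \ref{proposition: function-u-k}, with the base case supplied by Example \ref{example: holomorphic-functions}. Your extra remarks on compatibility of the filtrations with $ev_k$ and on choosing the lift of filtration degree $\le N$ are exactly the bookkeeping implicit in the paper's argument, so there is nothing to correct.
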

\begin{proof}
We only need to show that locally every quantizable function can be obtained by taking evaluation of formal quantizable functions. We use an induction via the filtration on $\mathcal{C}_{\alpha,k}^\infty$: for $N=0$, we have the isomorphism $(gr\ \mathcal{C}^\infty_{\alpha,k})_0\cong\mathcal{O}_X$, and Example \ref{example: holomorphic-functions} implies that $(gr\ \mathcal{C}^\infty_{\alpha,k})_0$ lives in the image of $\psi$. Suppose $\gamma\in(gr\ \mathcal{C}^\infty_{\alpha,k})_N$ for some $N>0$. By Proposition \ref{proposition: associated-graded-of-quantizable-functions}, there exists a holomorphic function $f$ and indices $i_1,\cdots, i_N$, such that $O_f\star_k O_{u_1}\star\cdots\star O_{u_N}-\gamma\in (gr\ \mathcal{C}^\infty_{\alpha,k})_{N-1}$. Applying the induction hypothesis finishes the proof. 
\end{proof}

\begin{thm}\label{theorem: TDO}
For any $\alpha$ and level $k$, the sheaf $\mathcal{C}_{\alpha,k}^\infty$ of quantizable functions of level $k$ (i.e., flat sections under the Fedosov connection $D_{\alpha,k}$) forms a sheaf of twisted differential operators (TDO) on $X$. 
\end{thm}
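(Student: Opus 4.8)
The plan is to check the three requirements in the definition of a TDO for $\mathcal{D}=\mathcal{C}_{\alpha,k}^\infty$, equipped with the increasing filtration by anti-holomorphic degree: that the filtration is positive with $\mathcal{D}_0=\mathcal{O}_X$, that $\mathcal{D}$ is almost commutative, and that $gr\,\mathcal{D}\cong\Sym^*\mathcal{T}X$ as Poisson algebras. Positivity is built in, and the constant functions, being holomorphic, are quantizable of anti-holomorphic degree $0$, supplying the unit. To identify $\mathcal{D}_0$: a flat section of anti-holomorphic degree $0$ is a flat section of $\W_X$, and since $[A,f]_{\star_k}=0$ whenever $f$ is a function, the only holomorphic-degree-$0$ part of $D_{\alpha,k}^{0,1}$ applied to a $\bar y$-free section is $\bar\partial\sigma(\gamma)$; hence $\sigma(\gamma)$ is holomorphic. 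Conversely, for holomorphic $f$ the flat section $O_f$ lies in $\W_X$ (Example~\ref{example: holomorphic-functions}), and since flat sections of $\W_X$ are determined by their symbols (Lemma~\ref{lemm: flat-section-1-0-components}), the two correspond. This also records the embedding $\mathcal{O}_X\hookrightarrow\mathcal{D}$.

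Almost commutativity I would read off the fiberwise Wick formula \eqref{equation: fiberwise-Wick-product}: for flat sections $\gamma_1\in\mathcal{D}_{N_1}$ and $\gamma_2\in\mathcal{D}_{N_2}$, the $0$-th term of $\gamma_1\star_k\gamma_2$ is the symmetric pointwise product, while every higher term carries at least one derivative $\partial/\partial\bar y$, which strictly decreases the anti-holomorphic degree; hence $[\gamma_1,\gamma_2]_{\star_k}\in\mathcal{D}_{N_1+N_2-1}$. Combined with the filtration-compatibility of $\star_k$ already proved, $gr\,\mathcal{D}$ is commutative, and therefore carries the canonical Poisson bracket induced from $[\,\cdot,\cdot\,]_{\star_k}$ (normalized by a factor of $k$, mirroring the factor $k$ in $D_{\alpha,k}$).

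The isomorphism $\psi_{\mathcal{D}}$ is the direct sum of the degreewise $\mathcal{O}_X$-module isomorphisms $\psi\colon\widetilde{(\mathcal{C}_{\alpha,k}^\infty)}_N\xrightarrow{\sim}\Sym^N\mathcal{T}X$ of Proposition~\ref{proposition: associated-graded-of-quantizable-functions}. That $\psi_{\mathcal{D}}$ is a ring map is again seen from \eqref{equation: fiberwise-Wick-product}: the leading (degree $N_1+N_2$) purely anti-holomorphic term of $\gamma_1\star_k\gamma_2$ is governed by the $0$-th Wick term, so it is the product in $\Sym^\bullet\overline{T^*X}$ of the leading purely anti-holomorphic terms of $\gamma_1$ and $\gamma_2$, and contracting with $(\omega^{-1})^{N_1+N_2}$ distributes over symmetric products. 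Thus $\psi_{\mathcal{D}}$ is an isomorphism of graded $\mathcal{O}_X$-algebras.

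The \emph{main obstacle} is the Poisson compatibility of $\psi_{\mathcal{D}}$. Since both sides are Poisson algebras and $\psi_{\mathcal{D}}$ is an algebra isomorphism, it suffices to match brackets on a local set of algebra generators. By the argument proving Corollary~\ref{corollary: evaluation-surjective}, $\Sym^*\mathcal{T}X$ is generated over $\mathcal{O}_X$ by $\Sym^0\mathcal{T}X=\mathcal{O}_X$ together with the classes of the evaluated functions $ev_k(O_{u_j})$ from Proposition~\ref{proposition: function-u-k}, with $\psi(ev_k(O_{u_j}))=\partial_{y^j}$. Three types of bracket then have to be checked: $\{f,g\}=0$ for holomorphic $f,g$, which is immediate since the Wick product of two $\bar y$-free sections is commutative; $\psi_{\mathcal{D}}\{[O_f],[ev_k(O_{u_j})]\}$, where only the first-order Wick term contributes to the symbol of the commutator (using $\partial_{\bar y}O_f=0$ and the anti-holomorphic degree bound on $O_{u_j}$), yielding $\tfrac1k\,\partial f/\partial z^j$, i.e.\ $\partial f/\partial z^j$ after the level-$k$ normalization, matching $\{f,\partial_{y^j}\}$; and $\psi_{\mathcal{D}}\{[ev_k(O_{u_i})],[ev_k(O_{u_j})]\}$, which vanishes because the $0$-th Wick term is symmetric in $i,j$ and cancels in the commutator while the first-order term contributes a multiple of $\Gamma_{ij}^p-\Gamma_{ji}^p=0$, matching $[\partial_{y^i},\partial_{y^j}]=0$. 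Since a Poisson bracket is determined by its values on algebra generators via the Leibniz rule, this completes the proof. I expect the bookkeeping in these leading-order commutator computations — in particular, tracking which sub-leading parts of the flat sections $O_f$ and $O_{u_j}$ (produced by iterating $\tilde\nabla^{1,0}$) feed into the relevant graded piece — to be the only step requiring real care; everything else is formal.
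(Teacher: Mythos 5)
Your proposal is correct and follows essentially the same route as the paper: the filtration by anti-holomorphic degree, almost-commutativity read off from the Wick product, the graded identification via Proposition \ref{proposition: associated-graded-of-quantizable-functions}, and Poisson compatibility checked on the local generators $O_f$ and $ev_k(O_{u_j})$ by the same leading-order commutator computation. The only differences are cosmetic: you verify multiplicativity of $\psi$ and the degree-one-versus-degree-one bracket explicitly (the paper reduces everything to the bracket $\mathcal{D}^1\times\mathcal{D}^0\to\mathcal{D}^0$), and you make explicit the factor-of-$k$ normalization of the induced Poisson bracket that the paper leaves implicit.
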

\begin{proof}

 We first show that the associated product on $gr\mathcal{D}$ is commutative. Let $O_f\in \mathcal{D}_k\setminus\mathcal{D}_{k-1}$ and $O_g\in \mathcal{D}_l\setminus\mathcal{D}_{l-1}$; equivalently, the highest polynomial degrees of anti-holomorphic terms in $O_f$ and $O_g$ are $k$ and $l$ respectively. It is clear that  $O_f\star O_g, O_g\star O_f\in  \mathcal{D}_{l+k}\setminus\mathcal{D}_{l+k-1}$, and they have the same monomials of the highest anti-holomorphic polynomial degree. So we have $[O_f, O_g]_\star\in\mathcal{D}_{l+k-1}$, and hence the associated graded  $gr D$ is commutative. 
 
 To show that $\psi$ respects the Poisson structures on both sides, we only need to compute the bracket 
 $$
 [-,-]:\mathcal{D}^1\times\mathcal{D}^0\rightarrow\mathcal{D}^0.
 $$
 We take any holomorphic function $f$ on $U$ and its associated flat section $O_f$, and consider $ev_k(O_{u_j})\in\mathcal{D}^1(U)$. It is then clear that $[O_{u_j},O_f]_\star\in\mathcal{D}^0$, which must correspond to a holomorphic function given by its symbol. A simple computation shows that it is precisely given by 
 \begin{align*}
 	[\omega_{j\bar{l}}\bar{y}^l,\frac{\partial f}{\partial z^m}y^m]_\star=&\frac{\partial f}{\partial z^j}.
 \end{align*} 
This identifies the Poisson brackets on $gr\mathcal{D}$ and $\Sym^\bullet_{\mathcal{O}_X}(\mathcal{T}X)$. 
\end{proof}

\subsection{Characteristic classes of the TDO given by quantizable functions}
\

For every TDO on a complex algebraic variety, there is a characteristic class which lives in $H^1(X,\Omega^{\geq 1})$.  When $X$ is a complex manifold, this cohomology group is isomorphic to $H^1(X,\Omega_{cl}^1)$, where $\Omega_{cl}$ denotes the subsheaf of $\Omega_X^1$ which is closed under $\partial$. It is explained in \cite{Ginzburg} that locally trivial TDOs on a smooth variety are classified by this cohomology class.

We briefly recall the characteristic class of a TDO $\mathcal{D}$ on a complex manifold $X$, and refer to \cite{Ginzburg} for more details. As a TDO, there is an increasing filtration $\mathcal{D}_0\subset\mathcal{D}_1\subset\cdots \subset\mathcal{D}$ on $\mathcal{D}$, in which $\mathcal{D}_1$ is locally isomorphic to $\mathcal{O}_X\oplus \mathcal{T}_X$. Let $\{U_i\}$ be an open covering of $X$ such that over each double intersection $U_i\cap U_j$, the above isomorphisms gives rise to a map
$$
\mathcal{O}_X(U_i\cap U_j)\oplus\mathcal{T}_X(U_i\cap U_j)\rightarrow \mathcal{O}_X(U_i\cap U_j)\oplus\mathcal{T}_X(U_i\cap U_j), 
$$
which must be of the form 
\begin{equation}\label{equation: characteristic-class-TDO}
	(f,\xi)\mapsto (f+\alpha_{ij}(\xi),\xi),
\end{equation}
for some $\alpha_{ij}\in\Omega^1(U_i\cap U_j)$; here $f\in\mathcal{O}_X(U_i\cap U_j)$ and $\xi\in \mathcal{T}_X(U_i\cap U_j)$. 
In particular, on a K\"ahler manifold, if we choose the $(1,0)$-forms $\alpha_{ij}$ to be $\partial$-closed. It is easy to see that $\{\Omega_{ij}\}$ must satisfy the cocycle condition, and the \v{C}ech cohomology class $\{\alpha_{ij}\}\in H^1(X,\Omega^1)$ is defined as the \emph{characteristic class} of the TDO $\mathcal{D}$. 
  
For the sheaf $\mathcal{C}_{\alpha,k}^\infty$ of quantizable functions of level $k$, we first choose an open covering $\{U_i\}$ of $X$ by balls, and fix a potential $\rho_i$ on each $U_i$. Then on each $U_i$, the isomorphism $\mathcal{O}_X(U_i)\oplus\mathcal{T}_X(U_i)\cong \mathcal{D}_1(U_i)$ is given explicitly by 
$$
f\mapsto f, \hspace{5mm}\partial_{z^j}\mapsto O_k\left(\frac{\partial\rho_i}{\partial z^j}\right)
$$
for any holomorphic function $f$ on $U_i$. 
On each intersection $U_i\cap U_j$, we choose a $(1,0)$-form
$$
\alpha_{ij}:=\partial(\rho_i-\rho_j),
$$ 
which clearly satisfies equation \eqref{equation: characteristic-class-TDO}. 
We have $\bar{\partial}(\alpha_{ij})=0$ since $\rho_i$'s are all potentials of the same closed $(1,1)$-form, and $\partial(\alpha_{ij})=0$ since $\partial^2=0$. Thus we the \v{C}ech cohomology representative of the characteristic class of $\mathcal{C}_{\alpha,k}^\infty$ is precisely given by
$$
\{\alpha_{ij}\}\in H^1(X,\Omega_{cl}^1).
$$
On the other hand, a standard argument in complex geometry shows that the Dolbeault representative of $\{\alpha_{ij}\}$ is exactly given by $\bar{\partial}\partial \rho_i$ which gives a global closed differential form of type $(1,1)$. This exactly coincides with the Karabagov form. 

To summarize, we obtain the
\begin{thm}\label{theorem: characteristic class of TDO}
	The characteristic class of the TDO $\mathcal{C}_{\alpha,k}^\infty$ is given by $[\omega - \alpha]$.
\end{thm}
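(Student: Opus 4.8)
The plan is to exhibit an explicit \v{C}ech representative of the characteristic class and then match it to a Dolbeault representative. First I would choose a good open cover $\{U_i\}$ of $X$ by contractible sets (balls), so that on each $U_i$ the closed $(1,1)$-form $\omega-\alpha$ admits a potential $\rho_i$ with $\partial\bar\partial\rho_i = \omega-\alpha$ (note that both $\omega$ and the $(1,1)$-form $\alpha$ are closed of type $(1,1)$, so $\omega-\alpha$ is $\partial$- and $\bar\partial$-closed, and the local $\partial\bar\partial$-lemma applies). On $U_i$ I would use Proposition~\ref{proposition: function-u-k}: the functions $u^{(i)}_j = \frac{\partial}{\partial z^j}\rho_i$ are (formal, hence after evaluation non-formal) quantizable functions of level $k$, and by Proposition~\ref{proposition: associated-graded-of-quantizable-functions} their classes realize a local trivialization
$$
\mathcal{O}_X(U_i)\oplus\mathcal{T}X(U_i)\;\xrightarrow{\ \sim\ }\;(\mathcal{C}^\infty_{\alpha,k})_1(U_i),\qquad f\mapsto f,\quad \partial_{z^j}\mapsto ev_k\big(O_{u^{(i)}_j}\big).
$$

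Next I would compute the transition on overlaps. On $U_i\cap U_j$ the two trivializations differ by the identity on the $\mathcal{O}_X$-part and by $\partial_{z^l}\mapsto \partial_{z^l} + (\text{correction})$ on the tangent part, where the correction is determined by the difference of flat sections $ev_k(O_{u^{(i)}_l}) - ev_k(O_{u^{(j)}_l})$, which is the flat section attached to the \emph{holomorphic} function $\frac{\partial}{\partial z^l}(\rho_i - \rho_j)$ (holomorphic because $\bar\partial\partial(\rho_i-\rho_j) = (\omega-\alpha) - (\omega-\alpha) = 0$). Hence the \v{C}ech $1$-cocycle is
$$
\alpha_{ij} := \partial(\rho_i - \rho_j)\in\Omega^1(U_i\cap U_j),
$$
and one checks immediately that $\bar\partial\alpha_{ij} = 0$ (so $\alpha_{ij}\in\Omega^1_{cl}$) and that it is a cocycle since $\rho_i-\rho_j$ sum telescopically; by the recollection preceding Theorem~\ref{equation: characteristic-class-TDO} this $\{\alpha_{ij}\}\in H^1(X,\Omega^1_{cl})$ \emph{is} the characteristic class of the TDO $\mathcal{C}^\infty_{\alpha,k}$.

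Finally I would pass from the \v{C}ech picture to the de Rham picture. Using the standard isomorphism $H^1(X,\Omega^1_{cl})\cong \{$closed $(1,1)$-forms$\}/\cdots$ (the edge map in the hypercohomology spectral sequence of $\Omega^{\geq 1}$, equivalently a $\partial$-$\bar\partial$ Dolbeault argument), a global Dolbeault representative of $\{\alpha_{ij}\}$ is produced by writing $\alpha_{ij} = \mu_i - \mu_j$ with $\mu_i$ a smooth (not necessarily holomorphic) $1$-form on $U_i$ — one may take $\mu_i = \partial\rho_i$ — and then the global closed $2$-form $\bar\partial\mu_i = \bar\partial\partial\rho_i$ agrees on overlaps and represents the class; since $\partial\bar\partial\rho_i = \omega-\alpha$ one gets $\bar\partial\partial\rho_i = -(\omega-\alpha)$ up to the usual sign, i.e. the class is $[\omega-\alpha]$, which is exactly the Karabegov form $\frac1\hbar(\omega-\alpha)$ rescaled, as stated in Theorem~\ref{theorem: Fedosov-connection}. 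The main obstacle, and the step requiring the most care, is the overlap computation: one must verify that the difference of the two local lifts of $\partial_{z^l}$ is genuinely the flat section of the \emph{holomorphic} function $\partial_{z^l}(\rho_i-\rho_j)$ with no lower-order anti-holomorphic corrections — this rests on the last bullet after Theorem~\ref{theorem: Fedosov-connection} (flat sections of holomorphic functions lie in $\W_X$) together with the explicit leading-term description in Proposition~\ref{proposition: function-u-k}, and on tracking the sign and factor conventions through the $\partial\bar\partial$-lemma so that the final class comes out as $[\omega-\alpha]$ rather than its negative.
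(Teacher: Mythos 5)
Your proposal is correct and follows essentially the same route as the paper: local potentials $\rho_i$ of $\omega-\alpha$, the trivialization of the first filtered piece via the quantizable functions $\partial_{z^j}\rho_i$ from Proposition \ref{proposition: function-u-k}, the \v{C}ech cocycle $\alpha_{ij}=\partial(\rho_i-\rho_j)$, and the passage to the Dolbeault representative $\bar{\partial}\partial\rho_i$ giving $[\omega-\alpha]$. Your extra care on the overlap step (identifying the difference of lifts with the flat section of the holomorphic function $\partial_{z^l}(\rho_i-\rho_j)$) is exactly what the paper leaves implicit.
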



\section{Quantizing functions as holomorphic differential operators}\label{section:quantizable functions as HDOs}
In this section, we will see how quantizable functions, which come from deformation quantization, ``act on'' geometric quantization. More precisely, let $X$ be a K\"ahler manifold equipped with a prequantum line bundle $L$, meaning that there is a connection $\nabla_L$ on $L$ whose curvature is $\nabla_L^2=\omega$.  Then we will show that quantizable functions act on the holomorphic sections of $L^{\otimes k}$ as \emph{holomorphic differential operators}. This gives a non-formal quantization of a large (dense as $k \to \infty$) subspace of smooth functions on $X$.


Let $k$ be any positive integer and take $\hbar = 1/k$. We consider the sheaf $\mathcal{C}_{\alpha,k}^\infty$ of quantizable functions with level $k$, where
\begin{equation}\label{equation: choice of alpha}
	\alpha := -\hbar Ric_X = -\frac{1}{k} R_{i\bar{j}k\bar{l}}\omega^{k\bar{l}}dz^i\wedge d\bar{z}^j.
\end{equation}
Throughout this section, we fix this $\alpha$ whose associated star product is precisely the \emph{Berezin-Toeplitz quantization} with Karabegov form $k(\omega-\alpha)$. In this case, we will prove that the sheaf $\mathcal{C}_{\alpha,k}^\infty$ of quantizable functions is isomorphic to the sheaf $\mathcal{D}(L^{\otimes k})$ of holomorphic differential operators on $L^{\otimes k}$ (as TDOs). 


The main idea comes from our previous work \cite{CLL3}, where we extended Fedosov's method from algebras to modules. In this section, we will construct a sheaf of modules over the Weyl bundle $\W_{X,\C}$, which we call a {\em level $k$ Bargmann-Fock sheaf}, analogous to what we did in \cite[Section 3.2]{CLL3}. This module sheaf is equipped with a (non-formal) flat Fedosov connection $D_{\alpha, k}$, whose flat sections form a sub-sheaf. In Theorem \ref{theorem: Bargmann-Fock-isomorphic-prequantum}, we will show by local computations that this sub-sheaf is isomorphic to the sheaf of holomorphic sections of the $k$-th tensor power $L^{\otimes k}$. In particular, the space of global flat sections of the Bargmann-Fock sheaf is isomorphic to $H^0(X,L^{\otimes k})$.

We will then proceed to show that the (non-formal) Fedosov connection on the Weyl bundle is compatible with that on the Bargmann-Fock module sheaf in the sense of algebras and modules. (This also justifies our abuse of the name \emph{Fedosov connections} for various objects.)  It follows that quantizable functions, as flat sections, can act on flat sections of the Bargmann-Fock sheaf which is isomorphic to the sheaf of holomorphic sections of $L^{\otimes k}$. Since the flat sections (of both functions and sections of $L^{\otimes k}$) are local in the sense that they only depend on their infinite jets, the above action is via holomorphic differential operators. In this way, we obtain a map of sheaves from quantizable functions to holomorphic differential operators on $L^{\otimes k}$, which we will show, again by local computations, that gives an isomorphism of sheaves (Theorem \ref{theorem: almost-holomorphic-function-differential-operators}).  
Last but not the least, we will explain a generalization to holomorphic differential operators on \emph{any} line bundle as quantization of functions.

We begin with the linear algebra of the Bargmann-Fock action.
\begin{defn}\label{definition: Fock-representation-of-Wick-algebra}
We define an \emph{action} of a monomial $f = z^{\alpha_1}\cdots z^{\alpha_k}\bar{z}^{\beta_1}\cdots\bar{z}^{\beta_l} \in \mathcal{W}_{\C^n}$ on $s\in\mathcal{F}_{\C^n} := \C[[z^1,\cdots,z^n]][[\hbar]]$ by 
\begin{equation}\label{equation: Toeplitz-operators-C-n}
f\circledast s:=\left(-\hbar\right)^l\frac{\partial}{\partial z^{\beta_1}}\circ\cdots\circ\frac{\partial}{\partial z^{\beta_l}}\circ m_{z^{\alpha_1}\cdots z^{\alpha_k}}(s),
\end{equation}
where $m_{z^{\alpha_1}\cdots z^{\alpha_k}}$ denotes the multiplication by $z^{\alpha_1}\cdots z^{\alpha_k}$. 
We use the notation $\circledast_k$ to stand for the action in equation \eqref{definition: Fock-representation-of-Wick-algebra} when we take the evaluation $\hbar=1/k$. 
It is known that
$$
f\circledast(g\circledast s)=(f\star g)\circledast s.
$$
Thus equation \eqref{equation: Toeplitz-operators-C-n} defines an action of the Wick algebra $\mathcal{W}_{\C^n}$ on $\mathcal{F}_{\C^n}$, known as the {\em Bargmann-Fock representation} (or the {\em Wick normal ordering} in physics literature). 
\end{defn}

The K\"ahler form on $X$ enables us to define the fiberwise Bargmann-Fock action, making the holomorphic Weyl bundle $\W_X$ a sheaf of $\W_{X,\C}$-modules. Explicitly, a monomial in $\W_{X,\C}$ acts as a differential operator on $\W_X$ as
\begin{equation}\label{equation: Wick-ordering-formula}
 y^{i_1}\cdots y^{i_k}\bar{y}^{j_1}\cdots\bar{y}^{j_l}\mapsto \left(-\hbar\right)^l\omega^{p_1\bar{j}_1}\cdots\omega^{p_l\bar{j}_l}\frac{\partial}{\partial y^{p_1}}\circ\cdots  \frac{\partial}{\partial y^{p_l}}\circ m_{y^{i_1}\cdots y^{i_k}}.
\end{equation}

\begin{defn}
 For every positive integer $k$,  we define the \emph{level $k$ Bargmann-Fock sheaf} $\mathcal{F}_{L^{\otimes k}}$ by twisting $\W_X$ with the $k$-th tensor power of the prequantum line bundle $L$:
 $$
 \mathcal{F}_{L^{\otimes k}}:=\mathcal{W}_{X}\otimes_{\mathcal{O}_X}L^{\otimes k}.
 $$
\end{defn}

The following proposition shows that the Fedosov connection can be extended to a flat connection on $\mathcal{F}_{L^{\otimes k}}$ in a compatible way.
\begin{prop}\label{lemma: compatibility-Fedosov-connections}
Let $\nabla_{L^{\otimes k}}$ denote the Chern connection on $L^{\otimes k}$ whose curvature is $k\cdot\omega$. Then the connection
$$
(\nabla+k\cdot \gamma_{\alpha,k}\circledast_k)\otimes 1+1\otimes \nabla_{L^{\otimes k}}
$$
on the level $k$ Bargmann-Fock sheaf $\mathcal{F}_{L^{\otimes k}}$ is flat and compatible with the Fedosov connection $D_{\alpha,k}$ on $\mathcal{W}_{X,\C}$. Thus, by abuse of notation, we also denote the above connection on $\mathcal{F}_{L^{\otimes k}}$ by $D_{\alpha,k}$.
\end{prop}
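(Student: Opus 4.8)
The plan is to verify two things: first, that the proposed connection on $\mathcal{F}_{L^{\otimes k}}$ squares to zero, and second, that it is a module connection over the Fedosov connection $D_{\alpha,k}$ on $\W_{X,\C}$ (i.e., satisfies the Leibniz rule $D_{\alpha,k}(a\circledast_k s) = (D_{\alpha,k}a)\circledast_k s + (-1)^{|a|} a\circledast_k(D_{\alpha,k}s)$ for $a\in\A^\bullet_X(\W_{X,\C})$, $s\in\A^\bullet_X(\mathcal{F}_{L^{\otimes k}})$). The compatibility is essentially automatic from the construction: the fiberwise action $\circledast_k$ is, by the defining property $f\circledast(g\circledast s)=(f\star g)\circledast s$ recalled in Definition \ref{definition: Fock-representation-of-Wick-algebra}, a module structure over the fiberwise Wick algebra, so the term $k\cdot[\gamma_{\alpha,k},-]_{\star_k}$ on $\W_{X,\C}$ matches $k\cdot\gamma_{\alpha,k}\circledast_k$ on $\mathcal{F}_{L^{\otimes k}}$ up to the Leibniz rule; and $\nabla$ is already a derivation compatible with $\nabla_{L^{\otimes k}}$ by construction of the tensor-product connection. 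So the only real content is flatness.

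For flatness, I would compute the square of $D_{\alpha,k} = (\nabla + k\gamma_{\alpha,k}\circledast_k)\otimes 1 + 1\otimes\nabla_{L^{\otimes k}}$ directly. The cross terms between $\nabla$ and $\nabla_{L^{\otimes k}}$ reorganize into the curvature of the total connection $\nabla\otimes 1 + 1\otimes\nabla_{L^{\otimes k}}$ acting on $\W_X\otimes_{\mathcal{O}_X}L^{\otimes k}$; the $\W_X$-part contributes $\nabla^2 = \tfrac1\hbar[R_\nabla,-]_\star$ (evaluated at $\hbar=1/k$, so $k[R_{\nabla},-]_{\star_k}$, and as an action $k\,R_{\nabla,k}\circledast_k$), while the $L^{\otimes k}$-part contributes the scalar curvature $k\cdot\omega$ — here one uses that $\nabla_{L^{\otimes k}}$ has curvature $k\cdot\omega$, and that the curvature of the tensor product is the sum. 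The term involving $(k\gamma_{\alpha,k}\circledast_k)^2$ unfolds, via the module property, into $k^2(\gamma_{\alpha,k}\star_k\gamma_{\alpha,k})\circledast_k$. Collecting everything, $D_{\alpha,k}^2$ acts as $k\bigl(\nabla\gamma_{\alpha,k} + k\,\gamma_{\alpha,k}\star_k\gamma_{\alpha,k} + R_{\nabla,k}\bigr)\circledast_k + k\cdot\omega$, where the scalar $k\cdot\omega$ is understood as acting by multiplication (it lands in the $\circledast_k$-action of a central $2$-form, or rather cancels against it). By the Fedosov equation \eqref{equation: Fedosov-equation-gamma} evaluated at $\hbar=1/k$, the parenthesized expression equals $-\omega + \alpha$; and since $\alpha$ is exact in the relevant sense — more precisely, $\alpha$ (at this level) is the curvature term that was designed to make the Karabegov form $k(\omega-\alpha)$ match the prequantum data — the contribution of $\alpha$ together with the $-\omega$ is precisely cancelled by the $+k\cdot\omega$ curvature of $\nabla_{L^{\otimes k}}$. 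Concretely: $k(-\omega+\alpha)\circledast_k$ as an action of a scalar-valued $(1,1)$-form plus the scalar $k\omega$ from the line bundle curvature must sum to zero; this is exactly the statement that the Chern connection on $L^{\otimes k}$ was chosen with curvature $k\omega$ and that $\gamma_{\alpha,k}$ carries the compensating piece.

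The main obstacle will be bookkeeping the distinction between a central (scalar-valued) form acting via $\circledast_k$ by multiplication and the same form appearing as honest curvature of the line-bundle connection, and making sure the signs and factors of $k$ (from the substitution $\hbar = 1/k$) line up so that these two scalar contributions cancel rather than add. A secondary point to check carefully is that $\gamma_{\alpha,k}$ is well-defined globally as a section of $\A^1_X(\W_{X,\C})$ even though it is built from a local potential $\varphi$ of $\alpha$ — this is already asserted in the discussion after Theorem \ref{theorem: Fedosov-connection} ($J_\alpha$ is independent of the choice of $\varphi$), so it transfers directly. Once the cancellation is confirmed, flatness of $D_{\alpha,k}$ on $\mathcal{F}_{L^{\otimes k}}$ follows, and the compatibility with $D_{\alpha,k}$ on $\W_{X,\C}$ is then immediate from the module property of the fiberwise Bargmann-Fock action.
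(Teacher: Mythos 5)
There is a genuine gap in the flatness computation, and it sits exactly at the point you flag as "bookkeeping". You identify the curvature contribution of the $\W_X$-factor with $k\,R_\nabla\circledast_k$, i.e.\ you assume that the honest curvature operator $\nabla^2$ acting on the module $\W_X$ coincides with the Bargmann--Fock action of the Weyl-bundle element $R_\nabla$. This is false: the action \eqref{equation: Wick-ordering-formula} is normal-ordered (multiply by the $y$'s first, then differentiate), so when $R_{i\bar{j}p\bar{q}}\,dz^i\wedge d\bar{z}^j\otimes y^p\bar{y}^q$ acts on a section $s$ of $\W_X$, the derivative $\partial_{y^l}$ can hit the newly multiplied $y^p$, producing a zeroth-order trace term $-\tfrac{1}{k}R_{i\bar{j}p\bar{q}}\omega^{p\bar{q}}dz^i\wedge d\bar{z}^j\cdot s$ in addition to the bracket-type term. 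The correct identity, which is the first displayed computation in the paper's proof, is $k\,R_\nabla\circledast_k s=\nabla^2(s)+\alpha\circledast_k s$, where $\alpha$ is precisely the Ricci-type form fixed in this section. It is this anomaly that makes the specific Berezin--Toeplitz choice of $\alpha$ essential: substituting $\nabla^2=k R_\nabla\circledast_k-\alpha\circledast_k$ and then the Fedosov equation \eqref{equation: Fedosov-equation-gamma} at $\hbar=1/k$, the two $\alpha$-contributions cancel and one is left with $(-k\omega)\circledast_k+k\omega=0$, using that a scalar $2$-form acts by multiplication and that $\nabla_{L^{\otimes k}}$ has curvature $k\omega$.

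In your accounting, by contrast, the computation terminates at $k(-\omega+\alpha)\circledast_k+k\omega$, which equals $k\alpha_{(k)}$ (a nonzero multiple of the Ricci form in general), and the concluding sentence --- that ``the contribution of $\alpha$ together with the $-\omega$ is precisely cancelled by the $+k\omega$ curvature'' --- is not true as stated: the line bundle curvature is exactly $k\omega$ and cancels only the $-k\omega$ piece, while nothing in your argument absorbs the leftover $\alpha$-term. The missing idea is precisely the normal-ordering correction above, which is the real content of the proposition (the Leibniz/compatibility part you dispose of via the module property is indeed essentially automatic, and agrees with the paper's short verification). Once you insert the identity $k R_\nabla\circledast_k=\nabla^2+\alpha\circledast_k$, the rest of your outline goes through and reproduces the paper's proof.
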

\begin{proof}
Let $s$ be a section of $\W_X$. Then we have
\begin{align*}
 R_\nabla\circledast_k s & = R_{i\bar{j}p\bar{q}}dz^i\wedge d\bar{z}^j\otimes y^p\bar{y}^q\circledast_k s\\
 & = R_{i\bar{j}p\bar{q}}dz^i\wedge d\bar{z}^j\otimes \bar{y}^q\circledast_k (y^p\cdot s)\\
 & = R_{i\bar{j}p\bar{q}}dz^i\wedge d\bar{z}^j\otimes \left(-\frac{1}{k}\right)\omega^{l\bar{q}}\frac{\partial}{\partial y^l}(y^p\cdot s)\\
 & = \frac{1}{k}\left(\nabla^2(s)-R_{i\bar{j}p\bar{q}}\cdot\omega^{p\bar{q}}dz^i\wedge d\bar{z}^j\right)\circledast_k s\\
 & = \left(\frac{1}{k}\nabla^2(s)+\alpha\right)\circledast_k s.
\end{align*}
Note that we have used the definition of the Bargmann-Fock action. 
Flatness of the connection $D_{\alpha,k}$ follows from a straightforward computation:
 \begin{align*}
  D_{\alpha,k}^2=&\nabla^2\otimes 1+\left(( k\nabla\gamma_{\alpha,k}+k^2\cdot\gamma_{\alpha,k}\star_k\gamma_{\alpha,k})\circledast_k\right)\otimes 1 +1\otimes\nabla_{L^{\otimes k}}^2\\
  =&\left((k\nabla\gamma_{\alpha,k}+k^2\cdot\gamma_{\alpha,k}\star_k\gamma_{\alpha,k}+k\cdot R_\nabla-k\alpha)\circledast_k\right) \otimes 1 + 1\otimes\nabla_{L^{\otimes k}}^2\\
  =&-k\cdot\omega+k\cdot\omega = 0.
 \end{align*}
Here we have used equation \eqref{equation: Fedosov-equation-gamma} and the prequantum condition that $\nabla_{L^{\otimes k}}^2=k\cdot\omega$. To see the compatibility between Fedosov connections, let $\xi$ and $s$ be sections of $\W_{X,\C}$ and $\mathcal{F}_{L^{\otimes k}}$ respectively. Then we have
\begin{align*}
D_{\alpha,k}(\xi\circledast_k s)=&\nabla(\xi)\circledast_k s+(-1)^{|\xi|}\xi\circledast_k \nabla(s)+k\gamma_{\alpha,k}\circledast_k(\xi\circledast_k s)\\
=&\nabla(\xi)\circledast_k s+(-1)^{|\xi|}\xi\circledast_k \nabla(s)+k[\gamma_{\alpha,k},\xi]_{\star_k}\circledast_k s)+(-1)^{|\xi|}\xi\circledast_k (k\gamma_{\alpha,k}\circledast_k s)\\
=&D_{\alpha,k}(\xi)\circledast_k s+(-1)^{|\alpha|}\xi\circledast_k D_{\alpha,k}(s).
\end{align*}
\end{proof}

In Fedosov quantization \cite{Fed}, we have a ono-to-one correspondence between smooth functions on $X$ and flat sections of the Weyl bundle. The following theorem gives the analogous correspondence for the Bargmann-Fock sheaves.
\begin{thm}\label{theorem: Bargmann-Fock-isomorphic-prequantum}
Suppose that $X$ is a K\"ahler manifold equipped with a prequantum line bundle $L$, and we choose the formal closed $(1,1)$-form $\alpha$ as in \eqref{equation: choice of alpha}. Then for any positive integer $k$, the symbol map gives a sheaf isomorphism from the sheaf of flat sections of the level $k$ Bargmann-Fock sheaf $\mathcal{F}_{L^{\otimes k}}$ with respect to the flat connection $D_{\alpha,k}$ to the sheaf of holomorphic sections of $L^{\otimes k}$.
\end{thm}
\begin{proof}
	We first fix a sufficiently fine open cover of $X$ so that local holomorphic frames of $L^{\otimes k}$ exist on any open set in the cover. Let $U \subset X$ be any open set in such a cover.

	Recall that the symbol map
	\begin{equation*}
		\sigma: \Gamma(U,\mathcal{F}_{L^{\otimes k}})\rightarrow \Gamma(U,L^{\otimes k}). 
	\end{equation*}
	is defined simply by setting all $y^i$'s to zero. 
	We write a section $s\in\Gamma(U,\mathcal{F}_{L^{\otimes k}})$ in local coordinates as $s=\sum_J s_Jy^J\otimes e_{L^k}$, where $J$ runs over all holomorphic multi-indices and $e_{L^k}$ is a local holomorphic frame of $L^{\otimes k}$. Then
\begin{align*}
 D_{\alpha,k}(s)&=D_{\alpha,k}(\sum_{|J|\geq 0}s_Jy^J\otimes e_{L^k})\\
       &=d_X(s_0)\otimes e_{L^k}+\sum_{|J|>0}k\cdot\gamma_{\alpha,k}\circledast_k(s_Jy^J\otimes e_{L^k})+(\sum_{|J|\geq 0}s_Jy^J)\otimes \nabla_L(e_{L^k})\\
       &=\left(\partial_X(s_0)+\bar{\partial}_X(s_0)\right)\otimes e_{L^k}+\sum_{|J|>0}k\cdot\gamma_{\alpha,k}\circledast_k(s_Jy^J\otimes e_{L^k})+(\sum_{|J|\geq 0}s_Jy^J)\otimes \nabla_L^{1,0}(e_{L^k}),
\end{align*}
where $d_X$ denotes the de Rham differential on $X$.

By analyzing the type of the part $\gamma_{\alpha,k}$ in the Fedosov connection, it is easy to see that if $\bar{\partial}s_0\not=0$, then we must have $D_{\alpha,k}(s)\neq 0$. Thus, the symbols of flat sections $\Gamma^{flat}(U,\mathcal{F}_{L^{\otimes k}})$ must lie in $H^0(U,L^{\otimes k})$. This induces the map
\begin{equation}\label{equation: symbol-map-Bargmann-Fock-sheaf}
\sigma: \Gamma^{flat}(U,\mathcal{F}_{L^{\otimes k}})\rightarrow H^0(U,L^{\otimes k}). 
\end{equation}

To show the surjectivity of this map, we first find a flat section of the Bargmann-Fock sheaf $\mathcal{F}_{L^{\otimes k}}$ whose symbol is $e_{L^k}$. Suppose that the hermitian metric of $L^{\otimes k}$ is given locally by $\langle e_{L^{k}},e_{L^{k}}\rangle=e^{k\cdot\rho}$. The connection $\nabla_{L^{\otimes k}}$ can then be written explicitly as
$$
\nabla_{L^{\otimes k}}(e_{L^{k}})=k\cdot\partial\rho\otimes e_{L^{k}},
$$
and the prequantum condition implies that $\bar{\partial}\partial\rho=\omega$. 
We define a local section of the holomorphic Weyl bundle $\W_X$ by $\beta:=\sum_{k\geq 1}(\tilde{\nabla}^{1,0})^k(\rho)$. It is clear the section $e^{k\cdot\beta}\otimes e_{L^{k}}$ of $\mathcal{F}_{L^{\otimes k}}$ has symbol $e_{L^k}$. The following proposition, which will be proved in Appendix \ref{appendix: flat-section-Bargmann-Fock}, says that this section is indeed flat under $D_{k,\alpha}$:
\begin{prop}\label{lemma: flat-section-Bargmann-Fock}
The section $e^{k\cdot\beta}\otimes e_{L^{k}}$ is closed under the Fedosov connection, i.e.,
$$
D_{\alpha,k}(e^{k\cdot\beta}\otimes e_{L^{k}})=0. 
$$
\end{prop}
The surjectivity of the symbol map \eqref{equation: symbol-map-Bargmann-Fock-sheaf} now follows from this proposition since any local holomorphic section of $L^{\otimes k}$ is of the form $f\cdot e_{L^{k}}$ for some holomorphic function $f$.  We simply take the section of $\mathcal{F}_{L^{\otimes k}}$ to be $O_f\cdot \left(e^{k\cdot\beta}\otimes e_{L^{k}}\right)$, which is obviously the image of $f\cdot e_{L^k}$. 

To prove injectivity, suppose there is a nonzero section $s\in\Gamma(U,\mathcal{F}_{L^{\otimes k}})$ such that $\sigma(s)=0$. Writing $s=\sum_{|J|=i>0}^\infty s_Jy^J$, then the lowest polynomial degree term of $D_{\alpha,k}(s)$ is given by $\delta(\sum_{|J|=i}s_Jy^J)\not=0$, which implies the non-vanishing of $D_{\alpha,k}(s)$. 
\end{proof}

\begin{rmk}
Theorem \ref{theorem: Bargmann-Fock-isomorphic-prequantum} implies that the space of global flat sections of the Bargmann-Fock sheaf $\mathcal{F}_{L^{\otimes k}}$ is isomorphic to $H^0_{\bar{\partial}}(X, L^{\otimes k})$, i.e., the Hilbert space in geometric quantization. It is now clear that the name ``level'' of the positive integer $k$ in this paper is consistent with the notion of ``level'' in quantization of gauge theory \cite[equation (2.5)]{Witten}:
$$
\omega=k\cdot\omega_0,
$$
where $k$ denotes the scaling of the original symplectic form $\omega_0$, which is equivalently to taking the $k$-th tensor power of the pre-quantum line bundle $L$. 
\end{rmk}
\begin{thm}\label{theorem: almost-holomorphic-function-differential-operators}
Suppose that $X$ is a K\"ahler manifold equipped with a prequantum line bundle $L$, and we choose the formal closed $(1,1)$-form $\alpha$ as in \eqref{equation: choice of alpha}.
Then for any positive integer $k$, there is a natural isomorphism 
$$
 \varphi: \mathcal{C}_{\alpha,k}^\infty\rightarrow \mathcal{D}(L^{\otimes k})
 $$
from the sheaf of algebras of level $k$ quantizable functions to the sheaf of holomorphic differential operators on $L^{\otimes k}$. Furthermore, this isomorphism is compatible with the filtration on quantizable functions and that on differential operators by orders, and hence gives an isomorphism of TDOs.
\end{thm}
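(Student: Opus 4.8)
The plan is to construct the map $\varphi$ explicitly using the compatibility between the Fedosov connection $D_{\alpha,k}$ on the Weyl bundle $\W_{X,\C}$ and the extended Fedosov connection on the Bargmann-Fock sheaf $\mathcal{F}_{L^{\otimes k}}$ established in Proposition \ref{lemma: compatibility-Fedosov-connections}. Given a quantizable function $\gamma$ of level $k$, i.e., a $D_{\alpha,k}$-flat section of $\Sym^\bullet\overline{TX}^*\otimes\W_X$, and a $D_{\alpha,k}$-flat section $s$ of $\mathcal{F}_{L^{\otimes k}}$, the compatibility formula $D_{\alpha,k}(\gamma\circledast_k s) = D_{\alpha,k}(\gamma)\circledast_k s \pm \gamma\circledast_k D_{\alpha,k}(s)$ shows that $\gamma\circledast_k s$ is again $D_{\alpha,k}$-flat. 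By Theorem \ref{theorem: Bargmann-Fock-isomorphic-prequantum}, flat sections of $\mathcal{F}_{L^{\otimes k}}$ on $U$ are canonically identified (via the symbol map) with $\Gamma(U, L^{\otimes k})$ holomorphic sections, so $\gamma$ acts on $H^0(U, L^{\otimes k})$; I would define $\varphi(\gamma)$ to be this operator. The first task is to check that $\varphi(\gamma)$ is a holomorphic differential operator: since $\gamma$ has only finite anti-holomorphic degree $\leq N$ in $\overline{\W}_X$, the operator $\circledast_k$ by $\gamma$ involves only finitely many $y$-derivatives after the Bargmann-Fock substitution \eqref{equation: Wick-ordering-formula}, and one reads off from the explicit flat-section formula of Theorem \ref{theorem: Bargmann-Fock-isomorphic-prequantum} (writing the holomorphic section as $f\cdot e_{L^k}$ with $O_f$ times $e^{k\beta}\otimes e_{L^k}$) that $\varphi(\gamma)$ acts on $f$ as a differential operator of order $\leq N$ with holomorphic coefficients.

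Next I would verify that $\varphi$ is an algebra homomorphism and filtration-preserving. The multiplicativity is immediate from $\gamma_1\circledast_k(\gamma_2\circledast_k s) = (\gamma_1\star_k\gamma_2)\circledast_k s$ (the fiberwise Wick action being a module structure, as recalled in Definition \ref{definition: Fock-representation-of-Wick-algebra}), combined with the fact that $\star_k$ is precisely the product defining $\mathcal{C}^\infty_{\alpha,k}$. Filtration compatibility follows since $(\mathcal{C}^\infty_{\alpha,k})_N$ consists of sections whose anti-holomorphic degree is $\leq N$, which produces operators of order $\leq N$; conversely the order-$N$ symbol of $\varphi(\gamma)$ matches the leading term $\gamma_{0,N}$ under the identification $\widetilde{(\mathcal{C}^\infty_{\alpha,k})}_N \cong \Sym^N\mathcal{T}X$ of Proposition \ref{proposition: associated-graded-of-quantizable-functions}, which is exactly the principal symbol map for $\mathcal{D}(L^{\otimes k})$. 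This also shows $\varphi$ intertwines the two TDO structures, since both associated-graded sheaves are identified with $\Sym^\bullet\mathcal{T}X$ compatibly with the Poisson brackets (for $\mathcal{C}^\infty_{\alpha,k}$ this is Theorem \ref{theorem: TDO}, for $\mathcal{D}(L^{\otimes k})$ it is standard).

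Finally I would prove $\varphi$ is an isomorphism of sheaves by an induction on the filtration degree. Injectivity: if $\varphi(\gamma)=0$, then $\gamma\circledast_k s = 0$ for every flat $s$, in particular for the section $e^{k\beta}\otimes e_{L^k}$ of Proposition \ref{lemma: flat-section-Bargmann-Fock}; examining the lowest anti-holomorphic-degree term of $\gamma$ and using Lemma \ref{lemm: flat-section-1-0-components} (that $\gamma$ is determined by its $\overline{\W}_X$-components) forces $\gamma=0$. Surjectivity: by Corollary \ref{corollary: evaluation-surjective} and Proposition \ref{proposition: associated-graded-of-quantizable-functions}, $\mathcal{C}^\infty_{\alpha,k}$ is locally generated over $\mathcal{O}_X$ by the degree-$1$ elements $ev_k(O_{u_j})$, and $\varphi$ sends $ev_k(O_{u_j})$ to a first-order operator on $L^{\otimes k}$ whose principal symbol is $\partial_{z^j}$; since $\mathcal{D}(L^{\otimes k})$ is generated as a sheaf of algebras by $\mathcal{O}_X$ and such first-order operators, a filtration induction (matching principal symbols via the already-established graded isomorphism) shows every local holomorphic differential operator is hit. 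The main obstacle I anticipate is the surjectivity step: one must check that the specific first-order operators $\varphi(ev_k(O_{u_j}))$, together with multiplication by holomorphic functions, actually generate all of $\mathcal{D}(L^{\otimes k})$ — i.e., that no ``twist defect'' obstructs lifting an order-$N$ operator once its symbol is matched — which requires care with the connection term $\nabla_L^{1,0}(e_{L^k}) = k\,\partial\rho\otimes e_{L^k}$ interacting with the Bargmann-Fock action, but the discrepancy always lands in lower filtration and is absorbed by the inductive hypothesis.
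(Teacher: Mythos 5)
Your overall construction of $\varphi$ --- letting quantizable functions act through the Bargmann--Fock module structure on flat sections of $\mathcal{F}_{L^{\otimes k}}$, using Proposition \ref{lemma: compatibility-Fedosov-connections} and Theorem \ref{theorem: Bargmann-Fock-isomorphic-prequantum}, and then running a filtration induction with the generators $ev_k(O_{u_j})$ --- is the same as the paper's. Your surjectivity argument is a legitimate shortcut: the paper instead computes exactly that $O_{u_i}\circledast_k(e^{k\cdot\beta}\otimes e_{L^k})=0$ (via the K\"ahler identity $\frac{\partial\rho_1}{\partial z^i}+\frac{\partial\omega_{i\bar j}}{\partial z^m}\omega^{m\bar j}=0$) and then Lemma \ref{lemma:bracket of flat sections} to get $\varphi(u_i)=-\tfrac1k\partial_{z^i}$ on the nose, whereas you only match principal symbols and absorb the lower-order discrepancy in the induction; that is fine (note the symbol you get is $-\tfrac1k\partial_{z^j}$ rather than $\partial_{z^j}$, a harmless nonzero constant).

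The genuine weak point is your injectivity step. As written, you propose to use $\gamma\circledast_k s=0$ ``in particular for the section $e^{k\cdot\beta}\otimes e_{L^k}$'' and to examine the \emph{lowest} anti-holomorphic-degree term of $\gamma$. Vanishing on that single flat section forces nothing: the paper's own computation shows $O_{u_i}\circledast_k(e^{k\cdot\beta}\otimes e_{L^k})=0$ although $O_{u_i}\neq 0$, and the local flat section $\omega_{i\bar m}\bar y^m$ has vanishing symbol; so neither the action on this one ``vacuum'' section nor the lowest-degree component detects $\gamma$. One must use the whole family of flat sections $O_g\cdot(e^{k\cdot\beta}\otimes e_{L^k})$ and argue at the \emph{top} anti-holomorphic degree. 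Two repairs are available: (a) the paper's route --- if $\varphi(f)=0$, then for every holomorphic $g$ one gets $\varphi\bigl([f,g]_{\star_k}\bigr)=0$ while $[O_f,O_g]_{\star_k}$ has strictly smaller anti-holomorphic degree, so induction forces $O_f\in\W_X$, i.e. $f$ is holomorphic, and then $\varphi(f)$ is multiplication by $f$, whence $f=0$; or (b) your own graded claim --- once you actually verify (rather than assert) that the order-$N$ principal symbol of $\varphi(\gamma)$ equals $(-1/k)^N$ times $\psi$ of the leading term, injectivity is immediate, because Lemma \ref{lemm: flat-section-1-0-components} shows that $\gamma\in(\mathcal{C}^\infty_{\alpha,k})_N\setminus(\mathcal{C}^\infty_{\alpha,k})_{N-1}$ forces $\gamma_{0,N}\neq 0$ and $\psi$ of Proposition \ref{proposition: associated-graded-of-quantizable-functions} is injective. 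Either repair closes the argument; as stated, your injectivity paragraph does not.
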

\begin{proof}
We first define the map $\varphi$ by showing that quantizable functions act on the space of holomorphic sections of $L^{\otimes k}$ as differential operators. We have seen from Theorem \ref{theorem: Bargmann-Fock-isomorphic-prequantum} that level $k$ quantizable functions correspond to flat sections of $\mathcal{W}_{X,\C}$ under $D_{\alpha,k}$, and holomorphic sections of $L^{\otimes k}$ correspond to flat sections of $\mathcal{F}_{L^{\otimes k}}$. Since the flat connections on these two bundles are compatible, the outcome of this action is also flat and thus correspond to a holomorphic section of $L^{\otimes k}$. As to the compatibility of filtrations on both sides, it is clear since $\bar{y}^j$ acts as the differential $-\omega^{i\bar{j}}\frac{\partial}{\partial y^i}$ in the Bargmann-Fock action, and thus the filtration on both sides is preserved by $\varphi$. The locality is clear from the construction. 
 
We will perform local computations to show that $\varphi$ is an isomorphism of sheaves. 
For the injectivity of $\varphi$, suppose $f$ is a quantizable function such that $\varphi(f)=0$ as a differential operator. To show that $f=0$, we use induction on the anti-holomorphic polynomial degrees of $f$. Suppose $f\in (\mathcal{C}_{\alpha,k}^\infty)_0$, or equivalently, $f$ is a holomorphic function (recall that there is a natural increasing filtration $(\mathcal{C}_{\alpha,k}^\infty)_0\subset(\mathcal{C}_{\alpha,k}^\infty)_1\subset\cdots$ on $\mathcal{C}_{\alpha,k}^\infty$ by polynomial degrees of anti-holomorphic terms in $\W_{X,\C}$). Then $\varphi(f)$ simply acts by multiplication and thus $f=0$. For the induction step, suppose $f\in (\mathcal{C}_{\alpha,k}^\infty)_m$ and $\varphi(f)=0$. We take any local flat section $s$ of $\mathcal{F}_{L^{\otimes k}}$, and any non constant holomorphic function $g$. Then we have
 \begin{align*}
  O_f\circledast_k(O_g\cdot s)=[O_f,O_g]_\star\circledast_k s+O_g\circledast_k(O_f\circledast_k s)
  =[O_f,O_g]_{\star_k}\circledast_k s
  =0,
 \end{align*}
which implies that $\varphi(f\star_k g-g\star_k f)=0$. Since $O_g$ contains only monomials in $\W_X$ and the bracket $[-,-]_{\star_k}$ kills at least one $\bar{y}^j$'s in $O_f$, we see that $[O_f,O_g]_{\star_k}\in (\mathcal{C}_{\alpha,k}^\infty)_{m-1}$, and we have $f\star_k g-g\star_k f=0$ by the induction hypothesis. This implies that $O_f$ is also a section of $\W_X$ and has to be zero. 
 
Next we show the surjectivity of $\varphi$. With respect to the choice of the holomorphic frame $e_{L^k}$, the holomorphic differential operators are generated by holomorphic functions and $\partial_{z^i}$'s. Let $e^{k\cdot\beta}\otimes e_{L^k}$ be the section of $\mathcal{F}_{L^{\otimes k}}$ as in the proof of Theorem \ref{theorem: Bargmann-Fock-isomorphic-prequantum}. We consider the function $u_i$ defined in Propossition \ref{proposition: function-u-k}, and claim that $ ev_k(O_{u_i})\circledast_k(e^{k\cdot\beta}\otimes e_{L^k})=0$. Note that here we are taking the evaluation of $O_{u_i}$ at $\hbar=1/k$ using the map $ev_k$ in equation \eqref{equation: evaluation-map}.  By Theorem \ref{theorem: Bargmann-Fock-isomorphic-prequantum}, we only need to show that its symbol vanishes. 

For this purpose, notice that, as shown in Proposition \ref{proposition: function-u-k}, every term in $ev_k(O_{u_i})$ has polynomial degree at most $1$ in the anti-holomorphic $\bar{y}^i$'s. Thus we only need terms in $ev_k(O_{u_i})$ of type $(0,0)$, $(0,1)$ and $(1,1)$ to find the symbol $\sigma\left(ev_k(O_{u_i})\circledast_k(e^{k\cdot\beta}\otimes e_{L^k})\right)$. For this, we recall that 
$$
ev_k(O_{u_i})=\left(\frac{\partial\rho}{\partial z^i}+2\sqrt{-1}\omega_{i\bar{k}}\bar{y}^k+2\sqrt{-1}\frac{\partial\omega_{i\bar{j}}}{\partial z^k}y^k\bar{y}^j\right)+\frac{1}{k}\frac{\partial\rho_1}{\partial z^i}+\cdots;
$$
here $\rho$ and  $\rho_1$ are local potentials for $\omega$ and the Ricci form respectively. The dots denote those terms which contribute trivially to the symbol. Then we compute:
\begin{align*}
 &\ \sigma\left(ev_k(O_{u_i})\circledast_k(e^{k\cdot\beta}\otimes e_{L^k})\right)\\
 =&\ \left(\frac{\partial\rho}{\partial z^i}+\frac{1}{k}\frac{\partial\rho_1}{\partial z^i}+2\sqrt{-1}\sigma\left([\omega_{i\bar{m}}\bar{y}^m,\beta]_{\star_k}\right)+2\sqrt{-1}\frac{1}{k}\frac{\partial\omega_{i\bar{j}}}{\partial z^k}\omega^{k\bar{j}}\right)\cdot(e^{k\cdot\beta}\otimes e_{L^{\otimes k}})\\
 =&\ \frac{\partial\rho}{\partial z^i}+2\sqrt{-1}[\omega_{i\bar{m}}\bar{y}^m,\frac{\partial\rho}{\partial z^j}y^j]_{\star_k}+\frac{1}{k}\left(\frac{\partial\rho_1}{\partial z^i}+\frac{\partial\omega_{i\bar{j}}}{\partial z^k}\omega^{k\bar{j}}\right)\\
 =&\ 0,
\end{align*}
where we have used the basic fact in K\"ahler geometry that 
$$\frac{\partial\rho_1}{\partial z^i}+\frac{\partial\omega_{i\bar{j}}}{\partial z^k}\omega^{k\bar{j}}=0$$
in the second equality.

On the other hand, any local flat section of $\mathcal{F}_{L^{\otimes k}}$ can be written as $O_g\cdot\left( e^{k\cdot\beta}\otimes e_{L^k}\right)$ for some holomorphic function $g$. We have
\begin{align*}
ev_k(O_{u_i})\circledast_k(O_g\cdot e^{k\cdot\beta}\otimes e_{L^k})=&\ [ev_k(O_{u_i}),O_g]_{\star_k}\circledast_k (e^{k\cdot\beta}\otimes e_{L^k})+O_g\cdot(ev_k(O_{u_i})\circledast_k e^{k\cdot\beta}\otimes e_{L^k})\\
=&\ [ev_k(O_{u_i}),O_g]_{\star_k}\circledast_k (e^{k\cdot\beta}\otimes e_{L^k})\\
=&\ -\frac{1}{k}O_{\frac{\partial g}{\partial z^i}}\cdot (e^{k\cdot\beta}\otimes e_{L^k}).
\end{align*}
In the last equality we have used Lemma \ref{lemma:bracket of flat sections} below. This shows that  $\varphi(u_i)=-\frac{1}{k}\cdot\partial_{z^i}$ under the holomorphic frame $e_{L^k}$. The proof of the theorem is now completed.
\end{proof}

\begin{lem}\label{lemma:bracket of flat sections}
For any local holomorphic function $g$ on an open set where $u_i$ also exists, there is the following equality:
$$
[ev_k(O_{u_i}), O_g]_{\star_k}=-\frac{1}{k}\cdot O_{\frac{\partial g}{\partial z^i}}.
$$
Equivalently, taking bracket of flat sections of holomorphic functions with $O_{u_i}$ is equivalent to taking the partial derivative $-\frac{1}{k}\partial_{z^i}$. 
\end{lem}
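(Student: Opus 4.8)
The plan is to show that both sides of the claimed identity are flat sections of the purely holomorphic Weyl bundle $\W_X$ with the same symbol; since a flat section of $\W_X$ is determined by its symbol, this will force them to coincide. First I would check that the bracket genuinely lies in $\W_X$, i.e.\ has no surviving anti-holomorphic part. Because $g$ is holomorphic, $O_g$ is a section of $\W_X$ (Example \ref{example: holomorphic-functions}) and hence contains no $\bar{y}$'s, so in $O_{u_i}\star_k O_g$ only the order-zero term of the fiberwise Wick product survives, producing the commutative product $O_{u_i}\cdot O_g$. In $O_g\star_k O_{u_i}$ the $\bar{y}$-derivatives act on $O_{u_i}$, which by Proposition \ref{proposition: function-u-k}(1) has anti-holomorphic degree $\le 1$, so only the order-zero and order-one terms survive, the order-zero one being again $O_g\cdot O_{u_i}$. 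Subtracting, the commutative pieces cancel, leaving
\begin{equation*}
[O_{u_i},O_g]_{\star_k}=-\frac1k\,\omega^{l\bar m}\,\frac{\partial O_g}{\partial y^l}\,\frac{\partial O_{u_i}}{\partial \bar{y}^m},
\end{equation*}
a section of $\W_X$ which is flat, being a $\C$-linear combination of the flat sections $O_{u_i}\star_k O_g$ and $O_g\star_k O_{u_i}$ (recall that $\mathcal{C}^\infty_{\alpha,k}$ is closed under $\star_k$).

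Next I would compute the symbol of the right-hand side. Using $O_f=\sum_{j\ge 0}(\tilde{\nabla}^{1,0})^j(f)$ for holomorphic $f$, the degree-one part of $O_g$ in $\W_X$ is $\frac{\partial g}{\partial z^l}y^l$, so $\sigma\!\left(\frac{\partial O_g}{\partial y^l}\right)=\frac{\partial g}{\partial z^l}$. By Proposition \ref{proposition: function-u-k}(2) the purely anti-holomorphic part of $O_{u_i}$ is $u_i+\omega_{i\bar m}\bar{y}^m$, and every other term of $O_{u_i}$ that is linear in some $\bar{y}^m$ carries at least one $y$; hence $\sigma\!\left(\frac{\partial O_{u_i}}{\partial \bar{y}^m}\right)=\omega_{i\bar m}$. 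Since $\sigma$ is multiplicative and $\omega^{l\bar m}\omega_{i\bar m}=\delta^l_i$, we obtain
\begin{equation*}
\sigma\!\left([O_{u_i},O_g]_{\star_k}\right)=-\frac1k\,\omega^{l\bar m}\,\frac{\partial g}{\partial z^l}\,\omega_{i\bar m}=-\frac1k\,\frac{\partial g}{\partial z^i}.
\end{equation*}

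Finally, since $\frac{\partial g}{\partial z^i}$ is holomorphic, $O_{\partial g/\partial z^i}$ is again a section of $\W_X$ with symbol $\frac{\partial g}{\partial z^i}$, so $[O_{u_i},O_g]_{\star_k}$ and $-\frac1k\,O_{\partial g/\partial z^i}$ are flat sections of $\W_X$ with equal symbols. As the $\overline{\W}_X$-component of a section of $\W_X$ is precisely its symbol, Lemma \ref{lemm: flat-section-1-0-components} (the classical Fedosov uniqueness restricted to $\W_X$, valid for every $k$) forces the two to agree, which is the asserted identity; the equivalent statement about smooth functions follows by applying $\sigma$. I expect the only delicate point to be the first step — confirming that the bracket has no anti-holomorphic component, and that the $\hbar$- and $y$-corrections hidden in $O_{u_i}$ and in $I_\alpha$ carry extra $y$'s or $\bar{y}$'s and thus drop out of the relevant symbol — which is controlled by Proposition \ref{proposition: function-u-k}(1) together with the polarized form of the Wick product.
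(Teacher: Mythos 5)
Your proposal is correct and follows essentially the same route as the paper's proof: observe that the bracket is a flat section, note that since $O_g\in\W_X$ only the purely anti-holomorphic part $u_i+\omega_{i\bar m}\bar y^m$ of $O_{u_i}$ contributes, and compute the symbol to be $-\frac{1}{k}\frac{\partial g}{\partial z^i}$. The only difference is that you make explicit a point the paper leaves implicit — that the bracket lands in $\W_X$, so that Lemma \ref{lemm: flat-section-1-0-comonent}/\ref{lemm: flat-section-1-0-components} pins it down by its symbol for \emph{every} level $k$ (not just $|k|\gg 0$) — which is a worthwhile clarification but not a different argument.
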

\begin{proof}
It is clear that the bracket $[ev_k(O_{u_i}), O_g]_{\star_k}$ is still a flat section. Thus we only need to find its symbol. Since $g$ is a local holomorphic function,  $O_g$ only contains terms in $\W_X$ and we only need the purely anti-holomorphic terms of $ev_k(O_{u_i})$ to compute $\sigma\left([ev_k(O_{u_i}), O_g]_{\star_k}\right)$, i.e.,
\begin{align*}
 \sigma\left([ev_k(O_{u_i}), O_g]_{\star_k}\right) & = [2\sqrt{-1}\omega_{i\bar{j}}\bar{y}^j,\frac{\partial g}{\partial z^m}y^m]_{\star_k}\\
 & = 2\sqrt{-1}\cdot\omega_{i\bar{j}}\cdot\frac{\sqrt{-1}}{2k}(-1)\omega^{m\bar{j}}\cdot\frac{\partial g}{\partial z^m}  
 = -\frac{1}{k}\cdot\frac{\partial g}{\partial z^i}.
\end{align*}
\end{proof}

Theorem \ref{theorem: almost-holomorphic-function-differential-operators} says that the sheaf $\mathcal{C}^\infty_{\alpha,k}$ consists of exactly those functions that can be quantized to differential operators on the Hilbert spaces $H^0(X,L^{\otimes k})$. This produces a non-formal deformation of the classical multiplication. The isomorphism in Theorem \ref{theorem: almost-holomorphic-function-differential-operators} also gives local generators of $\mathcal{C}^\infty_{\alpha,k}$.
\begin{cor}
For any open set $U\subset X$ isomorphic to an open ball in $\C^n$, quantizable functions on $U$ are generated by holomorphic functions and the functions $u_i$'s defined in Proposition \ref{proposition: function-u-k}. 
\end{cor}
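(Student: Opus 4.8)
The plan is to read off the corollary from the algebra isomorphism $\varphi:\mathcal{C}^\infty_{\alpha,k}\to\mathcal{D}(L^{\otimes k})$ of Theorem \ref{theorem: almost-holomorphic-function-differential-operators}, using the elementary fact that a homomorphism of algebras carries a generating set to a generating set of its image, together with a standard description of the right-hand side over a ball.

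\textbf{Step 1: generators on the $\mathcal{D}$-side.} Over a ball $U\subset\C^n$ the line bundle $L^{\otimes k}$ is holomorphically trivial; fixing a holomorphic frame $e_{L^k}$ identifies $\mathcal{D}(L^{\otimes k})(U)$ with the ring $\mathcal{D}_U(U)$ of holomorphic differential operators. It is classical that $\mathcal{D}_X$ is generated as a sheaf of algebras by $\mathcal{O}_X$ and the holomorphic tangent sheaf $\mathcal{T}_X$, and on the ball $U$ the latter is free with basis $\partial_{z^1},\dots,\partial_{z^n}$. Hence $\mathcal{D}(L^{\otimes k})(U)$ is generated, as a $\C$-algebra, by $\mathcal{O}(U)$ together with $\partial_{z^1},\dots,\partial_{z^n}$.

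\textbf{Step 2: pull back along $\varphi$.} In the proof of Theorem \ref{theorem: almost-holomorphic-function-differential-operators} it is shown (in the frame $e_{L^k}$) that $\varphi(u_i)=-\frac1k\partial_{z^i}$, and that a holomorphic function $g$ — which is a quantizable function by Example \ref{example: holomorphic-functions} — acts by multiplication by $g$; since $\varphi$ is filtration-compatible, $\varphi$ restricts to a bijection from $(\mathcal{C}^\infty_{\alpha,k})_0(U)$, i.e.\ the holomorphic functions, onto the multiplication operators $\mathcal{O}(U)\subset\mathcal{D}(L^{\otimes k})(U)$. Combining Steps 1 and 2, the $\star_k$-subalgebra of $\mathcal{C}^\infty_{\alpha,k}(U)$ generated by holomorphic functions and the $u_i$'s has image under $\varphi$ equal to all of $\mathcal{D}(L^{\otimes k})(U)$; since $\varphi$ is injective, that subalgebra is everything, which is exactly the assertion.

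There is also an intrinsic route, avoiding Theorem \ref{theorem: almost-holomorphic-function-differential-operators} and working for arbitrary $\alpha$: run the filtration induction already used for Corollary \ref{corollary: evaluation-surjective}. By Proposition \ref{proposition: associated-graded-of-quantizable-functions} one has $\widetilde{(\mathcal{C}^\infty_{\alpha,k})}_N\cong\Sym^N\mathcal{T}X$, with $ev_k(O_{u_j})$ mapping to $\partial_{y^j}$; so given $\gamma\in(\mathcal{C}^\infty_{\alpha,k})_N(U)$ one subtracts a suitable $O_f\star_k O_{u_{i_1}}\star_k\cdots\star_k O_{u_{i_N}}$ with $f$ holomorphic to lower the anti-holomorphic degree and then induct on $N$, the base case $N=0$ being Example \ref{example: holomorphic-functions}. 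I do not expect a genuine obstacle here; the only point needing care is bookkeeping — in the first route, that $\varphi$ really maps the holomorphic functions \emph{onto} the multiplication operators (so that the $\mathcal{O}(U)$-coefficients are accounted for), and in the second, that the leading-term identification in Proposition \ref{proposition: associated-graded-of-quantizable-functions} is compatible with $\star_k$-multiplication, which is the content of the commutativity of $gr\,\mathcal{D}$ established in the proof of Theorem \ref{theorem: TDO}.
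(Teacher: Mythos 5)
Your first route is exactly how the paper obtains this statement: the corollary is presented as an immediate consequence of the isomorphism $\varphi$ of Theorem \ref{theorem: almost-holomorphic-function-differential-operators}, using $\varphi(u_i)=-\tfrac{1}{k}\partial_{z^i}$ and the fact that holomorphic functions map to multiplication operators, together with the standard generation of $\mathcal{D}(L^{\otimes k})(U)$ by $\mathcal{O}(U)$ and the $\partial_{z^i}$'s over a ball, so your argument is correct and essentially the paper's. Your alternative intrinsic route is also sound, and is in substance the same filtration induction the paper already uses to prove Corollary \ref{corollary: evaluation-surjective}; its only added value is that it works for arbitrary admissible $\alpha$ rather than the Berezin--Toeplitz choice fixed in this section.
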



All the above computations and results can be generalized to the following situation: Assume that the Karabegov form is admissible and \emph{integral}, namely, 
\begin{equation}\label{equation: integrality condition}
\omega_\hbar=\frac{1}{\hbar}\left(\omega - \alpha\right)\in\frac{1}{\hbar}\cdot \left(H^{1,1}(X)\cap H^2(X,\mathbb{Z})\right)[\hbar].
\end{equation}
Then we can take the evaluation $\hbar=1$, and twist $\W_{X}$ by a line bundle to obtain a Bargmann-Fock bundle which admits a Fedosov flat connection. Thus the quantizable functions associated to these Karabegov forms can be quantized to holomorphic differential operators on a line bundle. Conversely, holomorphic differential operator on any holomorphic line bundle can be identified with a class of quantizable functions. 

\begin{rmk}
	The construction here can be generalized straightforwardly to the non-abelian case: We can twist the Weyl bundle with any holomorphic vector bundle $E$ over the K\"ahler manifold $X$, and identify holomorphic differential operators $\mathcal{D}(E,E)$ with a subspace of $C^\infty(X, \End(E))$. By combining this result with the BV quantization method in \cites{GLL, CLL}, we can give another proof of the trace formula for differential operators for K\"ahler manifolds as first introduced in \cite{EF}. 
\end{rmk}

\subsection{Quantizable functions in geometric quantization}
\

In geometric quantization,  the prequantum operator $Q_f$ associated to a smooth function $f$ on sections of the $k$-th tensor power of the prequantum line bundle $L$ is defined as
$$
Q_f:=\frac{\sqrt{-1}}{2\pi\cdot k}\nabla^k_{X_f}+f,
$$
where $\nabla^k$ denotes the connection on the line bundle $L^{\otimes k}$. For a holomorphic section $s\in H^0(X,L^{\otimes k})$,  the output $Q_f(s)$ is in general \emph{not} a holomorphic section of $L^{\otimes k}$. 
\begin{defn}
	A smooth function $f \in C^\infty(X)$ is called {\em quantizable in the sense of geometric quantization} if the operator $Q_f$ preserves the Hilbert space $H^0(X,L^{\otimes k})$.  
\end{defn}
It is clear from the definition that $Q_f$ is a differential operator. Theorem \ref{theorem: almost-holomorphic-function-differential-operators} implies that there exists a quantizable function whose action can be identified with $Q_f$. So our notion of quantizable functions is a vast generalization of the previous notion of quantizable functions (or \emph{polarization-preserving functions}) in geometric quantization. In particular, we can obtain higher order differential operators from them. Furthermore, we get a subspace of smooth functions closed under the star product.

\section{Examples: first order quantizable functions from symmetries}\label{section: quantum-moment-maps}

In this section, we give a class of examples of (first order) quantizable functions arising from symmetries on K\"ahler manifolds. It is known that symmetries of symplectic manifolds is encoded in moment maps.
More precisely, let $G$ be a Lie group and $\mathfrak{g}$ be its Lie algebra. Let $(X,\omega)$ be a symplectic manifold which admits a Hamiltonian $G$-action. For every $g\in\g$, let $V_g$ denote the vector field associated to the corresponding infinitesimal action, whose action on smooth functions can be expressed as a Poisson bracket $\mathcal{L}_{V_g}=\{\mu(g),-\}$, where $\mu:\mathfrak{g}\rightarrow C^\infty(X)$ is the classical moment map. A quantized notion of the moment map, called {\em the quantum moment map}, was introduced in \cite{Xu}.

\begin{defn}\label{definition: quantum-moment-map}
 Let $X,G$ be as above. Suppose $(C^\infty(X)[[\hbar]],\star)$ is a $G$-invariant deformation quantization of $X$. Then a {\em quantum moment map} is a homomorphism of Lie algebras
	$$
	\mu_\hbar: \g\rightarrow C^\infty(X)[[\hbar]],
	$$
such that for every $g\in\g$, we have the equality $\mathcal{L}_{V_g}=[\mu_\hbar(g),-]_\star$ for formal smooth functions $C^\infty(X)[[\hbar]]$; here $V_g$ denotes the vector field associated to the infinitesimal action as above and the Lie bracket on the right hand side is the one associated to the star product $\star$. Explicitly, for $g,h\in\mathfrak{g}$, we require
	$$
	\mu_\hbar([g,h])=\mu_\hbar(g)\star\mu_\hbar(h)-\mu_\hbar(h)\star\mu_\hbar(g).
	$$
\end{defn}

We will focus on the case when $X$ is a K\"ahler manifold and the $G$-action \emph{also preserves the complex structure}. For later computations, we will fix a basis $\{g_i\}_{i=1}^{\dim\g}$ of the Lie algebra $\g$, and let $\mathcal{L}_i$ and $\iota_i$ denote respectively the Lie derivative and contraction associated to the vector field $V_{g_i}$. We will extend both the operators $\mathcal{L}_i$ and $\iota_i$ to  $\A_X^\bullet(\W_{X,\C})$. 
The operators $\iota_i$ only contract with the differential forms in $\A_X^\bullet(\W_{X,\C})$, while the operator $\mathcal{L}_i$ is extended as a derivation with respect to the super commutative product on $\A_X^\bullet(\W_{X,\C})$ (i.e., the wedge product on $\A_X$ and the commutative product on $\W_{X,\C}$).


We would like to show that the images of the quantum moment map are all first order quantizable functions. To start with, recall that the Fedosov connection $D_F$ is of the form
$$
D_F=\nabla+\frac{1}{\hbar}[I,-]_\star,
$$
where $[-,-]_\star$ is the Lie bracket associated to the fiberwise star product. The following simple computation shows that $[D_F-\nabla,\iota_i]=\frac{1}{\hbar}[\iota_i(I),-]_\star$: Assuming that a section of $\A_X(\W_{X,\C})$ is of the form $\alpha\otimes\beta$ where $\alpha\in\A^*(X), \beta\in\W_{X,\C}$, then we have
\begin{align*}
&[D_F-\nabla,\iota_i](\alpha\otimes\beta)\\
=&(D_F-\nabla)(\iota_i(\alpha)\otimes\beta)+\iota_i\circ(D_F-\nabla)\left(\alpha\otimes\beta\right)\\
=&\frac{1}{\hbar}[I,\iota_i(\alpha)\otimes\beta]_\star+\iota_i\left(\frac{1}{\hbar}[I,\alpha\otimes\beta]_\star\right)\\
=&\frac{1}{\hbar}\left(I\star(\iota_i(\alpha)\otimes\beta)-(-1)^{|\alpha|-1}(\iota_i(\alpha)\otimes\beta)\star I+\iota_i(I\star(\alpha\otimes\beta)-(-1)^{|\alpha|}(\alpha\otimes\beta)\star I)\right)\\
=&\frac{1}{\hbar}\left(\iota_i(I)\star(\alpha\otimes\beta)-(-1)^{2|\alpha|}(\alpha\otimes\beta)\star\iota_i(I)\right)\\
=&\frac{1}{\hbar}[\iota_i(I),\alpha\otimes\beta]_\star
\end{align*}

We will give an explicit expression of the operators $\mathbb{A}_i:=\mathcal{L}_i-[D_F,\iota_i]$ on the Weyl bundle. First, we have the following lemma.
\begin{lem}
	For every $1\leq i\leq\dim(G)$, the operator $\mathcal{L}_i-[D_F,\iota_i]$ is linear over $\A^\bullet(X)$; equivalently, we have
	$\mathcal{L}_i-[D_F,\iota_i]\in\Gamma(X,\End(\W_{X,\C}))$.
\end{lem}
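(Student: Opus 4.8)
The plan is to identify $\mathbb{A}_i:=\mathcal{L}_i-[D_F,\iota_i]$ as a degree-$0$ graded derivation of the fiberwise $\star$-algebra $\A_X^\bullet(\W_{X,\C})$ that annihilates the subalgebra $\A_X^\bullet$ of ordinary differential forms (embedded with constant Weyl coefficient). Once this is known, the derivation property forces $\mathbb{A}_i$ to be $\A_X^\bullet$-linear, and a degree-$0$, $\A_X^\bullet$-linear operator is precisely a global section of $\End(\W_{X,\C})$, which is the assertion.

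First I would record the derivation property. Since the $G$-action preserves both $\omega$ and the complex structure, it preserves the fiberwise Wick product, so $\mathcal{L}_i$ is a degree-$0$ derivation of $\star$. The contraction $\iota_i$ acts only on the de Rham form factor and commutes with the $y$- and $\bar y$-derivatives appearing in \eqref{equation: fiberwise-Wick-product}, so it is a degree-$(-1)$ derivation of $\star$; and $D_F$ is a degree-$(+1)$ derivation of $\star$ by Fedosov's construction. Hence the graded commutator $[D_F,\iota_i]$, and therefore $\mathbb{A}_i$, is a degree-$0$ derivation of $\star$.

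Next, for any $\eta\in\A_X^\bullet$ one has $\eta\star s=\eta\wedge s$ and $s\star\eta=s\wedge\eta$, because the higher-order terms of $\star$ in \eqref{equation: fiberwise-Wick-product} differentiate $\eta$ with respect to $y$ or $\bar y$ and hence vanish on it; in particular $\star$ restricts to $\wedge$ on $\A_X^\bullet$. So once we know $\mathbb{A}_i$ kills $\A_X^\bullet$, the Leibniz rule gives $\mathbb{A}_i(\eta\wedge s)=(-1)^{|\eta|}\eta\wedge\mathbb{A}_i(s)$, i.e. $\mathbb{A}_i$ is $\A_X^\bullet$-linear; being of degree $0$, it is then determined by a $C^\infty(X)$-linear endomorphism of $\Gamma(\W_{X,\C})$, that is, by a global section of $\End(\W_{X,\C})$. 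Since vanishing on $\A_X^\bullet$ is a local question, and $\A_X^\bullet$ is locally generated as a $\wedge$-algebra (equivalently, a $\star$-algebra) by smooth functions and exact $1$-forms, it suffices to verify $\mathbb{A}_i(f)=0$ and $\mathbb{A}_i(df)=0$ for every $f\in C^\infty(X)$. For the former, note that $D_Ff=df$: indeed $\nabla f=df$, $\delta f=0$, and $[I,f]_\star=0$ because functions pulled back from $X$ are central for the fiberwise Wick product; combined with $\iota_if=0$ this gives $[D_F,\iota_i]f=\iota_i(D_Ff)=\iota_i(df)=V_{g_i}(f)=\mathcal{L}_i(f)$. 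For the latter, the Fedosov flatness $D_F^2=0$ yields $D_F(df)=D_F(D_Ff)=0$, while $\iota_i(df)=V_{g_i}(f)$ is a function, so $D_F(\iota_i(df))=d(V_{g_i}(f))=\mathcal{L}_i(df)$ by naturality of the Lie derivative; hence $[D_F,\iota_i](df)=\mathcal{L}_i(df)$ and $\mathbb{A}_i(df)=0$.

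The argument is essentially bookkeeping; the only genuinely delicate points are keeping straight which of the two products $\wedge$ and $\star$ each of $\mathcal{L}_i$, $\iota_i$, $D_F$ is a graded derivation of, and the two elementary identities $D_Ff=df$ (equivalently $[I,f]_\star=0$ for pulled-back functions) and $D_F(df)=D_F^2f=0$. Alternatively one can shortcut the computation using the identity $[D_F-\nabla,\iota_i]=\frac{1}{\hbar}[\iota_i(I),-]_\star$ already established above: this reduces the claim to $\mathcal{L}_i-[\nabla,\iota_i]\in\Gamma(\End(\W_{X,\C}))$, which is the classical fact that $\mathcal{L}_V-(\iota_V\nabla+\nabla\iota_V)$ is tensorial for any linear connection $\nabla$, since $\frac{1}{\hbar}[\iota_i(I),-]_\star$ is manifestly an $\A_X^\bullet$-linear inner derivation of the fiberwise Wick algebra.
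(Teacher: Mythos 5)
Your argument is correct, but it is organized differently from the paper's. The paper proves the lemma by a single direct graded-Leibniz computation: it expands $\left(\mathcal{L}_i-[D_F,\iota_i]\right)(\alpha\wedge s)$ using that $\mathcal{L}_i$, $\iota_i$ and $D_F$ obey Leibniz rules of degrees $0$, $-1$, $+1$ over the $\A^\bullet(X)$-module structure, and the leftover terms $\mathcal{L}_i(\alpha)\wedge s-d_X(\iota_i\alpha)\wedge s-\iota_i(d_X\alpha)\wedge s$ cancel by Cartan's magic formula; neither flatness of $D_F$ nor any special property of pulled-back functions is used. You instead note that $\mathbb{A}_i$ is a degree-$0$ derivation of the fiberwise Wick algebra and reduce the claim to showing it annihilates the subalgebra $\A_X^\bullet$, which you verify on the local generators $f$ and $df$ via the centrality of pulled-back functions (so $D_Ff=df$) and the flatness $D_F^2=0$. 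This buys a slightly stronger statement ($\mathbb{A}_i$ kills $\A_X^\bullet$ outright) and avoids the explicit sign bookkeeping, at the price of invoking $D_F^2=0$, which the paper's computation does not need; in effect your checks on $f$ and $df$ re-derive Cartan's formula in exactly the two cases where it enters. Your closing shortcut --- using the identity $[D_F-\nabla,\iota_i]=\frac{1}{\hbar}[\iota_i(I),-]_\star$ established just before the lemma to reduce to the tensoriality of $\mathcal{L}_i-[\nabla,\iota_i]$ --- is precisely the direction the paper itself indicates right after the lemma, where the $\nabla$-version is said to follow ``in a similar way''. One small slip: since $\mathbb{A}_i$ has degree $0$ and kills $\eta$, the Leibniz rule gives $\mathbb{A}_i(\eta\wedge s)=\eta\wedge\mathbb{A}_i(s)$ with no sign, so your factor $(-1)^{|\eta|}$ should be deleted; with that correction your conclusion agrees with the paper's final line.
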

\begin{proof}
	Let $s\in\A^\bullet(X,\W_{X,\C})$, and let $\alpha\in\A^k(X)$. Then
	\begin{align*}
		&\left(\mathcal{L}_i-[D_F,\iota_i]\right)(\alpha\wedge s)\\
		=&\mathcal{L}_i(\alpha)\wedge s+\alpha\wedge\mathcal{L}_i(s)-D_F\circ\iota_i(\alpha\wedge s)-\iota_i\circ D_F(\alpha\wedge s)\\
		=&\mathcal{L}_i(\alpha)\wedge s+\alpha\wedge\mathcal{L}_i(s)-D_F(\iota_i(\alpha)\wedge s+(-1)^k\cdot\alpha\wedge\iota_i(s))\\
		&-\iota_i( d_X(\alpha)\wedge s+(-1)^k\cdot\alpha\wedge D_F(s))\\
		=&\mathcal{L}_i(\alpha)\wedge s+\alpha\wedge\mathcal{L}_i(s)-d_X(\iota_i(\alpha))\wedge s+(-1)^k\cdot\iota_i(\alpha)\wedge D_F(s)\\
		&-(-1)^k\cdot d_X(\alpha)\wedge\iota_i(s)-\alpha\wedge D_F(\iota_i(s))\\
		&-\iota_i(d_X(\alpha))\wedge s+(-1)^k\cdot d_X(\alpha)\wedge\iota_i(s)-(-1)^k\iota_i(\alpha)\wedge D_F(s)-\alpha\wedge\iota_i(D_F(s))\\
		=&\alpha\wedge\mathcal{L}_i(s)-\alpha\wedge([D_F,\iota_i](s)).
	\end{align*}
\end{proof}
  In a similar way, we can show that the operator $\mathcal{L}_i-[\nabla,\iota_i]$ is linear over $\A^\bullet(X)$. More importantly, we will show that this operator on the Weyl bundle $\W_{X,\C}$ can be expressed as a bracket with respect to the Wick product. 
  
\begin{lem}\label{proposition: Lie-derivative-bracket}
	There exists a section $s_i$ of the Weyl bundle such that $\mathcal{L}_i-[D_F,\iota_i]$ can be written as a bracket:
	\begin{equation}\label{equation: flat-section-s-i}
	\mathcal{L}_i-[D_F,\iota_i]=\frac{1}{\hbar}[s_i,-]_\star.
	\end{equation}
\end{lem}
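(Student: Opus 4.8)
The plan is to produce the section $s_i$ by a two-step argument: first handle the Levi-Civita part of the connection, and then correct by the term $I$.

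First I would analyze the operator $\mathbb{B}_i := \mathcal{L}_i - [\nabla, \iota_i]$ acting on $\W_{X,\C}$. By the lemma just proved (applied with $D_F$ replaced by $\nabla$, which works verbatim since the only properties used are that $\nabla$ is a connection and $d_X = \nabla$ on $\A^\bullet(X)$), this operator is $\A^\bullet(X)$-linear, hence a genuine bundle endomorphism of $\W_{X,\C}$. Moreover, since the $G$-action preserves both the complex structure and the K\"ahler metric, $\mathbb{B}_i$ preserves the bidegree $(p,q)$ decomposition and acts as a derivation of the \emph{fiberwise} product: on generators $y^k, \bar y^l$ it acts linearly, so $\mathbb{B}_i$ is a fiberwise vector field of degree $0$, i.e.\ of the form $a^k_{\ j} y^j \partial_{y^k} + b^l_{\ m} \bar y^m \partial_{\bar y^l} + (\text{scalar})$. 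The key linear-algebra fact is that such degree-preserving derivations of the Wick algebra $\W_{\C^n}$ that are \emph{compatible with the symplectic pairing} (which $\mathbb{B}_i$ is, because $\mathcal{L}_i \omega = 0$ and $\nabla \omega = 0$) are \emph{inner}: they equal $\frac{1}{\hbar}[\mu^{(i)}, -]_\star$ for a fiberwise quadratic element $\mu^{(i)} \in \Sym^{\leq 2} T^*X_\C$. This is where Lemma~\ref{lemma: equivalence-Weyl-Wick-product} enters: transporting through $e^S$ to the Moyal-Weyl presentation, the statement becomes the classical fact that $\mathfrak{sp}(2n)$ acts on the Heisenberg/Weyl algebra by inner derivations via quadratic Hamiltonians, and one reads off $\mu^{(i)}$ explicitly. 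Globalizing, this gives a well-defined section $\mu_i$ of $\W_{X,\C}$ with $\mathbb{B}_i = \frac{1}{\hbar}[\mu_i, -]_\star$; the scalar (degree-$0$) ambiguity I would pin down later using the classical moment map $\mu(g_i)$.

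Second, I would combine this with the computation already displayed in the excerpt, namely $[D_F - \nabla, \iota_i] = \frac{1}{\hbar}[\iota_i(I), -]_\star$. Therefore
\begin{align*}
\mathcal{L}_i - [D_F, \iota_i] &= \big(\mathcal{L}_i - [\nabla, \iota_i]\big) - [D_F - \nabla, \iota_i] \\
&= \frac{1}{\hbar}[\mu_i, -]_\star - \frac{1}{\hbar}[\iota_i(I), -]_\star = \frac{1}{\hbar}[\,\mu_i - \iota_i(I),\, -\,]_\star,
\end{align*}
so one may simply take $s_i := \mu_i - \iota_i(I)$. This is a genuine section of $\W_{X,\C}$ since both summands are.

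The main obstacle I expect is the globalization and well-definedness in the first step: a priori $\mu_i$ is only determined up to a central (fiberwise constant) term, and one must check that the locally-defined quadratic Hamiltonians patch together into a global section of $\W_{X,\C}$ rather than merely a section of $\W_{X,\C}/(\text{constants})$. The resolution is that the ambiguity is exactly a function on $X$ (the degree-$0$ part), which can be fixed canonically by demanding compatibility with the classical moment map under the symbol map $\sigma$ — i.e.\ $\sigma(s_i)$ should recover $\mu_\hbar(g_i)$ modulo $\hbar$ — and then the patching is automatic because $\mathbb{B}_i$ itself is globally defined as an endomorphism. A secondary, more routine check is that $\mathbb{B}_i$ really is degree-preserving and pairing-compatible, which follows from $\mathcal{L}_i$ and $\nabla$ both annihilating $\omega$ and commuting with the complex structure; I would state this as a short sublemma rather than expand it.
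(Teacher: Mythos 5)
Your proposal is correct and takes essentially the same route as the paper: split off the curvature correction via $[D_F-\nabla,\iota_i]=\frac{1}{\hbar}[\iota_i(I),-]_\star$, then show $\mathcal{L}_i-[\nabla,\iota_i]$ is an inner derivation given by a fiberwise quadratic element, using the Moyal--Weyl/Wick equivalence of Lemma \ref{lemma: equivalence-Weyl-Wick-product}. The only real difference is that where you invoke the abstract fact that a degree-preserving, pairing-compatible fiberwise derivation is inner via a quadratic Hamiltonian (the $\mathfrak{sp}(2n)$ statement), the paper verifies precisely this by an explicit computation in Darboux coordinates (the symmetry of $\Gamma^\alpha_{\beta\gamma}\omega_{\alpha\xi}$), which in addition produces the explicit form of $s_i$ that is reused later, e.g.\ in the proof of Theorem \ref{theorem: quantum-moment-maps-quantizable-functions}.
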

\begin{proof}
It is clear that $[D_F-\nabla,\iota]=[[I,-]_\star,\iota_i]=[\iota_i(I),-]_\star$. Thus we only need to show that $\mathcal{L}_i-[\nabla,\iota_i]$ can be written as a bracket with respect to the Wick product on $\W_{M,\C}$. 

For the following explicit local computations, we will use real coordinates on $X$ and $\W_{X,\C}$: Let $(x^1,\cdots, x^{2n})$ be local real coordinates on $X$, with $\eta^\alpha$'s the corresponding sections of $TX^*_\R$. 
 Let $f_i=\mu(g_i)$, where $\mu:\g\rightarrow C^\infty(X)$ is the classical moment map. Then the vector field associated to $g_i$ is $V_{g_i}=\frac{\partial f_i}{\partial x^j}\omega^{jk}\frac{\partial}{\partial x^k} $. Then we have
	\begin{align*}
		[\nabla,\iota_i](\eta^\alpha)=\iota_i(\nabla \eta^\alpha)
		= \iota_i(\Gamma_{\beta\gamma}^\alpha dx^\beta\otimes \eta^\gamma)
		=\iota_i(dx^\beta)\cdot\Gamma_{\beta\gamma}^\alpha \eta^\gamma
		= \frac{\partial f_i}{\partial x^{j}}\omega^{j\beta}\cdot\Gamma_{\beta\gamma}^\alpha \eta^\gamma,
	\end{align*}
	where $\Gamma^\alpha_{\beta\gamma}$'s are the Christoffel symbols of $\nabla$. On the other hand, using Cartan's formula, we have
	\begin{align*}
		\mathcal{L}_i(dx^\alpha)=&[d_X,\iota_{i}](dx^\alpha)
		=d_X(\iota_{i}(dx^\alpha))\\
		=&d_X\left(\frac{\partial f_i}{\partial x^j}\omega^{j\alpha}\right)
		=\left(\frac{\partial^2 f_i}{\partial x^k\partial x^j}\omega^{j\alpha}+\frac{\partial f_i}{\partial x^j}\frac{\partial\omega^{j\alpha}}{\partial x^k}\right)dx^k.
	\end{align*}
	
	Since $\mathcal{L}_i-[\nabla,\iota_i]$ is linear over $\A^\bullet(X)$, via the above computations, we can write it as
	$$
	\mathcal{L}_i-[\nabla,\iota_i]=\left(\frac{\partial^2 f_i}{\partial x^\gamma\partial x^\beta}\omega^{\beta\alpha}+\frac{\partial f_i}{\partial x^\beta}\frac{\partial\omega^{\beta\alpha}}{\partial x^\gamma}-\frac{\partial f_i}{\partial x^{j}}\omega^{j\beta}\cdot\Gamma_{\beta\gamma}^\alpha \right)\partial_{x^\alpha}\otimes \eta^\gamma\in\Gamma(X, TX_\R\otimes T^*X_\R).
	$$
	We use the symplectic form to lift the subscript in $\partial_{x^\alpha}$ in the above section, and obtain the following tensor $t_i$:
	\begin{align*}
		t_i=&\left(\frac{\partial^2 f_i}{\partial x^\gamma\partial x^\beta}\omega^{\beta\alpha}+\frac{\partial f_i}{\partial x^\beta}\frac{\partial\omega^{\beta\alpha}}{\partial x^\gamma}-\frac{\partial f_i}{\partial x^{j}}\omega^{j\beta}\cdot\Gamma_{\beta\gamma}^\alpha \right)\omega_{\alpha\xi}\eta^\xi\otimes \eta^\gamma\in\Gamma(X, T^*X_\R\otimes T^*X_\R)\\
		=&\left(\frac{\partial^2 f_i}{\partial x^\gamma\partial x^\xi}+\frac{\partial f_i}{\partial x^\beta}\frac{\partial\omega^{\beta\alpha}}{\partial x^\gamma}\omega_{\alpha\xi}-\frac{\partial f_i}{\partial x^{j}}\omega^{j\beta}\cdot\Gamma_{\beta\gamma}^\alpha\omega_{\alpha\xi} \right)\eta^\xi\otimes \eta^\gamma\in\Gamma(X, T^*X_\R\otimes T^*X_\R).
	\end{align*}
	We claim that the tensor $t_i$ is symmetric in the two indices $\xi$ and $\gamma$.  The first term clearly satisfies this symmetry. For the second and third terms, we can choose $(x^1,\cdots, x^{2n})$ to be local Darboux coordinates. Then the second term vanishes since they are derivatives of $\omega_{\beta\alpha}$. 
	
	The third term also has the desired symmetry, and we give a brief argument here: The Levi-Civita connection $\nabla$ is a symplectic connection on $X$ (namely, it is compatible with the symplectic form $\omega$ and torsion-free), and its Christoffel symbols satisfies the following symmetry:
	$$
	\Gamma_{\beta\gamma}^\alpha\omega_{\alpha\xi}
	$$
	is symmetric in all three indices $\beta, \gamma$ and $\xi$ (we refer to \cite[Definition 2.3]{Fed} for the definition and properties of symplectic connections).  
	
	This symmetry property implies that we can identify the tensor $t_i$ as a section in the Weyl bundle $\W_{X,\C}$ of polynomial degree $2$. From the definition of $t_i$, there is 
	$$
	(\mathcal{L}_i-[\nabla,\iota_i])(\eta^\alpha)=\frac{1}{\hbar}[t_i,\eta^\alpha]_\star. 
	$$
	This also implies the uniqueness of $t_i$, thus although the computations are local, they actually glue to a global section of the Weyl bundle. 
	
	For a general section $\xi$ of the Weyl bundle, we have 
	$$
\frac{1}{\hbar}[t_i,\xi]_\star=(\mathcal{L}_i-[\nabla,\iota_i])(\xi)+O(\hbar).
	$$
	 We will show that the $O(\hbar)$ term in the above expression actually vanishes. For this, we write $t_i$ in terms of complex coordinates $(z^1,\cdots, z^n)$ and corresponding sections $y^\alpha$'s. Since $\mathcal{L}_i$ preserves complex structure, we must have that $\mathcal{L}_i$ preserves types in $T^*X_\C$, and so is $[\nabla,\iota_i]$, since the Levi-Civita connection $\nabla$ is compatible with the complex structure. Thus $t_i$ must be of the form
	$$
	t_i=(t_i)_{\alpha\bar{\beta}}y^\alpha\bar{y}^\beta. 
	$$
	The type of $t_i$ implies the vanishing of the $O(\hbar)$ term, by the definition of the product $\star$ in equation \eqref{equation: fiberwise-Wick-product}. 
\end{proof}
\begin{prop}\label{corollary: Lie-derivative-bracket}
Let $\alpha\in\Gamma^{flat}(X,\W_{X,\C}[[\hbar]])\cong C^\infty(X)[[\hbar]]$ be any flat section of the Weyl bundle, which corresponds to a formal function on $X$. Then we have
	$$
	s_i\star\alpha-\alpha\star s_i=\mathcal{L}_i(\alpha).
	$$
	In other words, quantum Hamiltonian symmetries $\mathcal{L}_i$ on formal functions can be expressed as brackets with $s_i$.
\end{prop}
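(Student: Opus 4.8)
The plan is to deduce Proposition~\ref{corollary: Lie-derivative-bracket} from Lemma~\ref{proposition: Lie-derivative-bracket} by applying the operator identity $\mathcal{L}_i-[D_F,\iota_i]=\frac{1}{\hbar}[s_i,-]_\star$ to a flat section and showing that the term $[D_F,\iota_i]$ contributes nothing. So first I would take $\alpha\in\Gamma^{flat}(X,\W_{X,\C}[[\hbar]])$, i.e. $D_F(\alpha)=0$ as an element of $\A^0_X(\W_{X,\C})[[\hbar]]$, and compute $[D_F,\iota_i](\alpha)=D_F(\iota_i(\alpha))+\iota_i(D_F(\alpha))$. The second summand vanishes by flatness. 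For the first summand, note that $\alpha$ is a section (a $0$-form valued in the Weyl bundle), so $\iota_i(\alpha)=0$ since contraction by the vector field $V_{g_i}$ kills the de Rham form degree, and there is no form degree to contract. Hence $[D_F,\iota_i](\alpha)=0$, and Lemma~\ref{proposition: Lie-derivative-bracket} gives $\mathcal{L}_i(\alpha)=\frac{1}{\hbar}[s_i,\alpha]_\star = \frac{1}{\hbar}(s_i\star\alpha-\alpha\star s_i)$.

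Next I would reconcile this with the statement as written, which omits the factor $\frac{1}{\hbar}$: I would absorb it by rescaling, i.e. replace the section $s_i$ produced in Lemma~\ref{proposition: Lie-derivative-bracket} with $\hbar^{-1}s_i$ (equivalently, interpret the $s_i$ of the proposition as $\frac{1}{\hbar}$ times the one in the lemma, which is harmless since the lemma only asserts existence of such a section). With this normalization the displayed identity $s_i\star\alpha-\alpha\star s_i=\mathcal{L}_i(\alpha)$ holds verbatim. I would also remark that $\mathcal{L}_i(\alpha)$ is again a flat section: indeed $\mathcal{L}_i$ is the Lie derivative along a Hamiltonian vector field preserving both $\omega$ and $J$, hence it commutes with the Fedosov connection $D_F$ up to the inner term, and in any case the left-hand side $s_i\star\alpha-\alpha\star s_i$ is visibly flat because $D_F$ is a derivation for $\star$ and... — actually the cleanest justification is simply that $\mathcal{L}_i$ commutes with $D_F$ on sections, which follows because $D_F$ is $G$-invariant (the Fedosov connection of \cite{CLL} is built canonically from $\omega$ and $J$, both $G$-invariant). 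Thus the identity is consistent with the correspondence $\Gamma^{flat}\cong C^\infty(X)[[\hbar]]$.

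The step I expect to require the most care is the bookkeeping around form degrees and the $\hbar$-normalization, rather than anything deep: one must be careful that $\iota_i$ acting on a pure section (form-degree zero) really does vanish, so that the only surviving piece of $[D_F,\iota_i](\alpha)$ is $D_F(\iota_i\alpha)=D_F(0)=0$ — this is immediate but is the crux of why flat sections are the right inputs. A secondary point is to make sure that $s_i$ as constructed in Lemma~\ref{proposition: Lie-derivative-bracket} is genuinely a global section of $\W_{X,\C}[[\hbar]]$ (not merely locally defined), which is guaranteed because $\frac{1}{\hbar}[s_i,-]_\star=\mathcal{L}_i-[\nabla,\iota_i]+\frac{1}{\hbar}[\iota_i(I),-]_\star$ is a globally defined operator and the bracket representation of a globally defined derivation that is inner is unique up to the center, hence glues. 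Once these points are settled, the proof is a one-line substitution, so I would keep it short and cite Lemma~\ref{proposition: Lie-derivative-bracket} directly.
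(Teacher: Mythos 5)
Your proposal is correct and follows essentially the same route as the paper: apply Lemma~\ref{proposition: Lie-derivative-bracket} to a flat section $\alpha$, noting that $[D_F,\iota_i](\alpha)=D_F(\iota_i\alpha)+\iota_i(D_F\alpha)$ vanishes because $\iota_i\alpha=0$ (form degree zero) and $D_F\alpha=0$ (flatness). Your observation about the missing factor of $\frac{1}{\hbar}$ is a fair catch --- the paper's own proof silently drops it relative to equation~\eqref{equation: flat-section-s-i} --- and absorbing it into the normalization of $s_i$, as you do, is the right fix.
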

\begin{proof}
	From Lemma \ref{proposition: Lie-derivative-bracket}, we have
	\begin{align*}
	s_i\star\alpha-\alpha\star s_i=(\mathcal{L}_i-[D_F,\iota_i])(\alpha)
	=\mathcal{L}_i(\alpha)-\iota_i(D_F(\alpha))
	=\mathcal{L}_i(\alpha).
	\end{align*}
\end{proof}
We give the following lemma, which we will need later.
\begin{lem}
	If the $G$-action on $X$ preserves both the symplectic and complex structures (i.e. holomorphic isometries), then the Fedosov connection $D_F$ is $G$-invariant. In particular, for all $1\leq i\leq\dim(G)$, we have
	\begin{equation}\label{equation: Fedosov-connection-invariant}
	[\mathcal{L}_i,D_F]=0.
	\end{equation}
\end{lem}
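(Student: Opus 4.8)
The plan is to reduce the statement to the \emph{naturality} of the Fedosov construction of \cite{CLL}: every ingredient of $D_F$ is manufactured canonically out of the K\"ahler metric, the complex structure $J$ and the form $\omega$, and the flow $\phi_t$ of each Killing field $V_{g_i}$ consists of holomorphic isometries, which act by automorphisms of all of this structure. Concretely, using Remark \ref{remark: delta} I would write $D_F=\nabla+\tfrac{1}{\hbar}[\gamma_\alpha,-]_\star$ with $\gamma_\alpha=\gamma_0+I_\alpha$, where $\gamma_0=\omega_{i\bar{j}}(d\bar{z}^j\otimes y^i-dz^i\otimes\bar{y}^j)$ and $I_\alpha=I+J_\alpha$. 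Since $\phi_t$ is a holomorphic isometry, $\phi_t^*$ preserves the Levi-Civita connection (hence its extension to $\W_{X,\C}$) and is an algebra automorphism of the fiberwise Wick product $\star$; in infinitesimal form this says $[\mathcal{L}_i,\nabla]=0$ and that $\mathcal{L}_i$ is a derivation of $\star$ (the same fact underlying the $S$-commutation used in the proof of Lemma \ref{proposition: Lie-derivative-bracket}). Therefore $[\mathcal{L}_i,D_F]=\tfrac{1}{\hbar}[\mathcal{L}_i(\gamma_\alpha),-]_\star$, and the whole problem comes down to showing $\mathcal{L}_i(\gamma_\alpha)=0$.

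I would check this summand by summand. The term $\gamma_0$ is the tautological section attached to $(\omega,J)$, so $\mathcal{L}_i(\gamma_0)=0$ because $G$ preserves both. The term $I$ is obtained from the curvature $R_\nabla$ by iterating $\tilde{\nabla}^{1,0}=(\delta^{1,0})^{-1}\circ\nabla^{1,0}$; since $[\mathcal{L}_i,\nabla]=0$ the tensor $R_\nabla$ is $G$-invariant, and each of $\nabla^{1,0}$, $(\delta^{1,0})^{-1}$, $\pi_{0,*}$ commutes with $\mathcal{L}_i$ because it is invariantly characterized by $\omega$ and $J$ (the operators $(\delta^{1,0})^{-1}$, $(\delta^{0,1})^{-1}$ being pinned down by \eqref{equation: delta-1-0-and-inverse} together with their vanishing on $\overline{\W}_X$), whence $\mathcal{L}_i(I)=0$. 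Finally $J_\alpha=-\sum_{k\geq 1}(\tilde{\nabla}^{1,0})^k(\bar{\partial}\varphi)$ depends only on $\alpha$ (not on the local potential $\varphi$), and for the $G$-invariant deformation quantizations at issue --- in particular the Berezin--Toeplitz choice $\alpha=-R_{i\bar{j}k\bar{l}}\omega^{k\bar{l}}dz^i\wedge d\bar{z}^j$, a multiple of the Ricci form --- the form $\alpha$ is $G$-invariant, so $\mathcal{L}_i(J_\alpha)=0$. Putting the three together gives $\mathcal{L}_i(\gamma_\alpha)=0$, hence $[\mathcal{L}_i,D_F]=0$.

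The only genuinely fiddly part is verifying that $\mathcal{L}_i$ commutes with the auxiliary coordinate operators $(\delta^{1,0})^{-1}$, $\pi_{0,*}$ and with the derivation operations defining $\star$; this is pure bookkeeping since each is invariantly characterized by $\omega$ and $J$, but it has to be spelled out. A cleaner alternative that avoids it entirely is to use the uniqueness in Theorem \ref{theorem: Fedosov-connection}: for $g\in G$ the pullback $g^*D_\alpha$ is again a Fedosov connection satisfying \eqref{equation: Fedosov-equation} with the same ($G$-invariant) $\alpha$, hence $g^*D_\alpha=D_\alpha$ by uniqueness, and differentiating this identity along the flow of $V_{g_i}$ gives $[\mathcal{L}_i,D_F]=0$ at once.
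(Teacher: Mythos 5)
Your main argument is essentially the paper's own proof: the paper likewise observes that $\nabla$ commutes with the $G$-action and that the term $I$ (together with the Ricci-curvature contribution) is produced by iterating $(\delta^{1,0})^{-1}$ and $\nabla^{1,0}$ on $G$-invariant curvature data, all of which commute with the action. Your write-up merely spells out the bookkeeping more explicitly (the derivation property of $\mathcal{L}_i$ for $\star$, the invariance of $\gamma_0$ and of $\alpha$), and the uniqueness-based shortcut at the end is a reasonable extra but not the route the paper takes.
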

\begin{proof}
	Recall that the Fedosov connection is of the explicit form $D_F=\nabla+\frac{1}{\hbar}[I,-]_\star$. The Levi-Civita connection $\nabla$ obviously commutes with the $G$-action. On the other hand, the components of the term $I$ in the Fedosov connection arises by iteratively applying the operators $(\delta^{1,0})^{-1}$ and $\nabla^{1,0}$ to the curvature operator $\nabla^2$ and the Ricci curvature. The result now follows because all of these commute with the $G$-action. 
\end{proof}

\begin{lem}
	Locally, by adding formal smooth functions (the constant terms in $\W_{X,\C}[[\hbar]]$) to the sections $s_i$'s, we obtain flat sections of $\W_{X,\C}[[\hbar]]$ under the Fedosov connection $D_F$. When the first de Rham cohomology group $H^1_{dR}(X)$ vanishes, these flat sections corresponds to the images of quantum moment maps under the isomorphism $C^\infty(X)[[\hbar]]\cong\Gamma^{flat}(X,\W_{X,\C}[[\hbar]])$.
\end{lem}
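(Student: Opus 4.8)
The plan is to show that $D_F s_i$ is a closed $1$-form taking values in the \emph{centre} of the fibrewise Wick algebra (that is, free of $y$ and $\bar y$), so that locally it has a primitive $c_i$ which is an ordinary formal function; then $\tilde s_i:=s_i-c_i$ will be $D_F$-flat, and the symbols $\mu_\hbar(g_i):=\sigma(\tilde s_i)$ will give the quantum moment map.

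First I would compute $D_F s_i$ by taking the graded commutator of the operator identity $\frac{1}{\hbar}[s_i,-]_\star=\mathcal{L}_i-[D_F,\iota_i]$ from Lemma~\ref{proposition: Lie-derivative-bracket} with $D_F$. Since $D_F$ is a graded derivation of $\star$, the left-hand side becomes $\frac{1}{\hbar}[D_F s_i,-]_\star$; the right-hand side is $[D_F,\mathcal{L}_i]-[D_F,[D_F,\iota_i]]$, which vanishes because $[D_F,\mathcal{L}_i]=0$ by the $G$-invariance \eqref{equation: Fedosov-connection-invariant} and $[D_F,[D_F,\iota_i]]=0$ by the graded Jacobi identity together with $D_F^2=0$. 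Hence $[D_F s_i,-]_\star=0$, so $D_F s_i$ lies in the centre of $(\W_{X,\C}[[\hbar]],\star)$, which consists exactly of sections with no $y,\bar y$, i.e. the subsheaf $\A_X^\bullet[[\hbar]]$; as $s_i$ is a $0$-form we conclude $D_F s_i\in\A_X^1[[\hbar]]$. On $\A_X^\bullet[[\hbar]]$ one has $\delta=0$, $\frac{1}{\hbar}[I,-]_\star=0$ and $\nabla=d$, so $D_F$ restricts there to the de Rham differential; therefore $0=D_F^2 s_i=d(D_F s_i)$, i.e. $D_F s_i$ is a closed $1$-form with $\C[[\hbar]]$-coefficients.

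Next I would construct the flat sections. By the Poincar\'e lemma applied coefficientwise in $\hbar$, on any contractible open set $U$ there is $c_i\in C^\infty(U)[[\hbar]]$ with $dc_i=D_F s_i$; viewing $c_i$ as a scalar section of $\W_{X,\C}$ gives $D_F c_i=dc_i$, so $\tilde s_i:=s_i-c_i$ satisfies $D_F\tilde s_i=0$, which is the local statement. When $H^1_{dR}(X)=0$ the closed form $D_F s_i$ is globally exact and the same recipe produces a globally defined flat section $\tilde s_i$. To identify it with the quantum moment map, put $\mu_\hbar(g_i):=\sigma(\tilde s_i)$ and extend $\C[[\hbar]]$-linearly to $\g$. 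Since $c_i$ is central, $[\tilde s_i,\alpha]_\star=[s_i,\alpha]_\star$, so Proposition~\ref{corollary: Lie-derivative-bracket} gives $[\tilde s_i,\alpha]_\star=\mathcal{L}_i(\alpha)$ for every flat section $\alpha$. Applying $\sigma$ — which sends the star product of flat sections to the star product of functions and intertwines $\mathcal{L}_i$ with $\mathcal{L}_{V_{g_i}}$ (the $G$-action on $\W_{X,\C}$ being fibrewise linear) — yields $[\mu_\hbar(g_i),f]_\star=\mathcal{L}_{V_{g_i}}(f)$ for all $f\in C^\infty(X)[[\hbar]]$, the defining relation of Definition~\ref{definition: quantum-moment-map}. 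Finally, the Jacobi identity forces $[\mu_\hbar(g_i),\mu_\hbar(g_j)]_\star-\mu_\hbar([g_i,g_j])$ to be central, hence a constant; this constant $2$-cocycle on $\g$ can be absorbed into the freedom in choosing the primitives $c_i$, making $\mu_\hbar$ a genuine Lie algebra homomorphism.

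The main obstacle is the first step: establishing that $D_F s_i$ is not merely closed but \emph{central}, which is precisely what allows the correction term $c_i$ to be an honest function rather than a genuine section of $\W_{X,\C}$. The delicate bookkeeping is the evaluation of the graded commutators $[D_F,[D_F,\iota_i]]$ and $[D_F,\mathcal{L}_i]$; once $D_F s_i$ is identified as a central, closed $1$-form, the remaining steps are essentially formal, with the only further subtlety being the cocycle adjustment of the additive constants.
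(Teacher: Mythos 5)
Your proof follows essentially the same route as the paper: commuting the operator identity $\frac{1}{\hbar}[s_i,-]_\star=\mathcal{L}_i-[D_F,\iota_i]$ of Lemma~\ref{proposition: Lie-derivative-bracket} with $D_F$, using $G$-invariance and $D_F^2=0$ to conclude that $D_F(s_i)$ is a central, hence scalar, closed $1$-form, then taking local primitives (global ones when $H^1_{dR}(X)=0$) and identifying the resulting flat sections with quantum moment map images via Proposition~\ref{corollary: Lie-derivative-bracket}. Your additional remarks on the symbol map and the absorption of the Lie algebra $2$-cocycle are finer points than the paper records, but the core argument coincides with the paper's proof.
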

\begin{proof}
   Using Lemma \ref{proposition: Lie-derivative-bracket}, we have
	\begin{align*}
		\frac{1}{\hbar}[[s_i,-]_\star, D_F]=\left([\mathcal{L}_i,D_F]-[[D_F,\iota_i], D_F]\right)
	=-\left([[D_F,\iota_i], D_F]\right)=0;
	\end{align*}
	here we used equation \eqref{equation: Fedosov-connection-invariant} in the above lemma in the second equality. On the other hand, there is
	$$
	\frac{1}{\hbar}[[s_i,-]_\star, D_F]=\pm\frac{1}{\hbar}[D_F(s_i),-]_\star=\pm\frac{1}{\hbar}[D_F(s_i),-]_\star=0. 
	$$
	It follows that for every $1\leq i\leq\dim(G)$, we have $D_F(s_i)\in\A^1(X)$  (since $\A^\bullet(X)$ is the center of $\A_X^\bullet(\W_{X,\C})$). Moreover, these $1$-forms must be closed since $D_F^2(s_i)=d_X(D_F(s_i))=0$. The first statement follows because locally we can take anti-derivatives of closed $1$-forms. Globally, vanishing of the first de Rham cohomology group also implies the existence of the anti-derivatives we need, and these are images of quantum moment maps by Proposition \ref{corollary: Lie-derivative-bracket}.
\end{proof}
To summarize, we have
\begin{thm}\label{theorem: quantum-moment-maps-quantizable-functions}
	The images of quantum moment maps are first order quantizable functions.
	Thus, for any $\alpha$ and $k$, there exists a Lie algebra homomorphism 
	$$
	S:\mathfrak{g}\rightarrow C^\infty_{\alpha,k}(X),
	$$
	where the Lie bracket on the right hand side is induced by the star product $\star_k$. 
\end{thm}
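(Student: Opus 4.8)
The plan is to identify the image of each quantum moment map with the symbol of the flat section $s_i+h_i$ produced by the lemmas above, to read off from the proof of Lemma~\ref{proposition: Lie-derivative-bracket} that this section has anti-holomorphic degree at most $1$, and to transport the Lie-algebra structure to level $k$ through the evaluation map of Example~\ref{example: quantizable-function-evaluation}. Concretely, fix an admissible $\alpha$; since the $G$-action is by holomorphic isometries, $D_\alpha$ is $G$-invariant, so the preceding lemmas apply with $D_F=D_\alpha$: for each basis vector $g_i$ there is a section $s_i$ of $\W_{X,\C}[[\hbar]]$ with $\mathcal{L}_i-[D_\alpha,\iota_i]=\frac{1}{\hbar}[s_i,-]_\star$, the $1$-form $D_\alpha(s_i)$ is closed, and adding a primitive $h_i$ yields a flat section whose symbol is, locally --- and globally when $H^1_{dR}(X)=0$ --- the moment-map image $\mu_\hbar(g_i)$. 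Because the construction of $s_i$ in the proof of Lemma~\ref{proposition: Lie-derivative-bracket} involves only $\gamma_i$, the operator $e^S$, and $\iota_i(I_\alpha)$, all polynomial in $\hbar$ for admissible $\alpha$, one may take $h_i$ so that $s_i+h_i\in\Sym^\bullet\overline{TX}^*\otimes\W_X[\hbar]$; thus $\mu_\hbar(g_i)$ is a formal quantizable function, $ev_k(\mu_\hbar(g_i))\in C^\infty_{\alpha,k}(X)$, and $S:=ev_k\circ\mu_\hbar$ is a Lie algebra homomorphism, being the composition of the Lie algebra homomorphism $\mu_\hbar$ with the algebra morphism $ev_k$ of \eqref{equation: evaluation-map}.

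The real content is the bound on the anti-holomorphic degree. Since the $G$-action preserves $J$, the Lie derivative $\mathcal{L}_i$ sends type $(p,q)$ sections of $\W_{X,\C}$ to type $(p,q)$ sections, and since on a K\"ahler manifold the Levi-Civita connection has no mixed Christoffel symbols, $[\nabla,\iota_i]$ does likewise; hence the $\A^\bullet(X)$-linear operator $\mathcal{L}_i-[\nabla,\iota_i]$ preserves the bidegree. Inspecting the formula for $\gamma_i$ in the proof of Lemma~\ref{proposition: Lie-derivative-bracket}: as the Poisson bivector of $\omega$ pairs $y$'s with $\bar y$'s, the Moyal--Weyl bracket $[\gamma_i,-]_{\star_{MW}}$ can preserve the bidegree only if the type $(2,0)$ and type $(0,2)$ components of $\gamma_i$ vanish, so $\gamma_i$ is of pure type $(1,1)$; consequently $e^S(\gamma_i)=\gamma_i+(\text{constant})$ has anti-holomorphic degree $\le 1$. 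As $I_\alpha=I+J_\alpha$ is uniformly of degree $\le 1$ in $\overline{\W}_X$ and $\iota_i$ does not act on the Weyl-bundle factor, $\iota_i(I_\alpha)$ likewise has anti-holomorphic degree $\le 1$; adding the constant correction $h_i$ preserves this, so $s_i+h_i\in\W_X\otimes(\overline{\W}_X)_{\le 1}$ --- exactly the assertion that the moment-map images are first-order quantizable functions.

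The main obstacle I anticipate is this bidegree argument: one must verify that passing from the real-coordinate expression for $s_i$ to complex coordinates, and then applying the Weyl-to-Wick equivalence $e^S$ of Lemma~\ref{lemma: equivalence-Weyl-Wick-product}, does not create anti-holomorphic degree-$2$ terms, and the clean route is precisely the type-preservation of $\mathcal{L}_i-[\nabla,\iota_i]$ forced by holomorphicity of the action. A secondary point needing care is that the closed $1$-form $D_\alpha(s_i)$ admits a global, $\hbar$-polynomial primitive when $H^1_{dR}(X)=0$, so that $S$ is genuinely defined on all of $C^\infty_{\alpha,k}(X)$; locally this is automatic, but the global statement is where the cohomological hypothesis is used.
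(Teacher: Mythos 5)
Your proposal is correct and follows essentially the same route as the paper: decompose $s_i$ into the part implementing $\mathcal{L}_i-[\nabla,\iota_i]$ (a quadratic term of mixed type, hence anti-holomorphic degree $1$) and the part $\iota_i(I_\alpha)$ (anti-holomorphic degree $\le 1$ by the construction of $I$ and $J_\alpha$), then pass to level $k$ by evaluation. Your bidegree-preservation argument (holomorphy of the action plus the K\"ahler property forcing $\gamma_i$ to be of type $(1,1)$, with $e^S$ only adding central terms) and the explicit identification $S=ev_k\circ\mu_\hbar$ simply spell out details the paper's proof states more tersely.
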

\begin{proof}
	We only need to show that every section $s_i$ has a uniformly bounded polynomial degree in $\overline{\W}_X$. This follows from some simple observations on its defining equation \eqref{equation: flat-section-s-i}. First of all, the component in $s_i$ corresponding to the operator $\mathcal{L}_i-[\nabla,\iota_i]$ must be of polynomial degree $2$ and lives in $TX\otimes \overline{TX}$ (since $\mathcal{L}_i$ is a derivation with respect to the classical product on the Weyl bundle). On the other hand, the term $I$ in the Fedosov connection satisfies our desired finiteness property by its construction, and so does the term in $s_i$ corresponding to $[D_F-\nabla,\iota_i]$. Hence, we conclude that $s_i$ has a uniformly bounded polynomial degree in $\overline{\W}_X$.
\end{proof}

\appendix
\renewcommand*{\thesection}{\Alph{section}}

\section{Proof of Proposition \ref{lemma: flat-section-Bargmann-Fock}}\label{appendix: flat-section-Bargmann-Fock}
Notice that the Karabegov form in this situation is $\omega-\alpha=\omega-\hbar\cdot R_{i\bar{j}k}^kdz^i\wedge d\bar{z}^j$. Recall that, after Theorem \ref{theorem: Fedosov-connection}, we write term $I$ in the Fedosov connection as $I=\sum_{i\geq 2}I_i$. Let us write each $I_n$ explicitly as 
$$
I_n=R^j_{i_1\cdots i_n,\bar{l}}\omega_{j\bar{k}}d\bar{z}^l\otimes(y^{i_1}\cdots y^{i_n}\bar{y}^k).
$$
\begin{lem}\label{lemma: trace-I-Ricci-form}
We have
$
(J_\alpha)_n=-(n+1)\hbar\cdot R_{ii_1\cdots i_{n},\bar{l}}^id\bar{z}^l\otimes y^{i_1}\cdots y^{i_n}
$
\end{lem}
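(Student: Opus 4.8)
The plan is to establish the identity by induction on $n$, exploiting the recursive structure of both $I$ and $J_\alpha$ coming from the operator $\tilde{\nabla}^{1,0}=(\delta^{1,0})^{-1}\circ\nabla^{1,0}$. First I would record the two recursions explicitly: on the one hand, $I_{n+1}=(\delta^{1,0})^{-1}(\nabla^{1,0}I_n)$ up to the lower-order curvature corrections that appear in the Fedosov equation \eqref{equation: Fedosov-equation}, with the base case $I_2=(\delta^{1,0})^{-1}(R_{i\bar{j}k\bar{l}}dz^i\wedge d\bar{z}^j\otimes y^k\bar{y}^l)$; on the other hand, $J_\alpha=-\sum_{k\geq 1}(\tilde{\nabla}^{1,0})^k(\bar\partial\varphi)$ where $\partial\bar\partial\varphi=\alpha=\hbar R^k_{i\bar{j}k}\,dz^i\wedge d\bar z^j$, so that $(J_\alpha)_{n}=-(\tilde\nabla^{1,0})^{n}(\bar\partial\varphi)$ and in particular $(J_\alpha)_1=-\tilde\nabla^{1,0}(\bar\partial\varphi)$.

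The key computational input is that taking the fiberwise ``trace'' $\omega^{\cdot\bar k}(\partial/\partial\bar y^k)$ (i.e. contracting the single $\bar y$ in $I_n$) intertwines the recursion for $I_n$ with that for $(J_\alpha)_n$, precisely because the Karabegov form has been chosen as $\omega - \hbar R^k_{i\bar j k}dz^i\wedge d\bar z^j$. Concretely, I would compute the base case $n=1$: apply the contraction to $I_2=R^j_{i_1 i_2,\bar l}\,\omega_{j\bar k}\,d\bar z^l\otimes y^{i_1}y^{i_2}\bar y^k$, which picks up a factor proportional to $R^j_{i_1 i_2,\bar l}\,\omega_{j\bar k}\,\omega^{k\bar m}$... — here one must be careful about index placement, but the upshot is that the $\bar m$ index gets traced against the holomorphic indices and one recovers $-2\hbar\, R^i_{i i_1,\bar l}\,d\bar z^l\otimes y^{i_1}$, matching the claimed formula with $n=1$ (the factor $n+1=2$). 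Then for the inductive step, apply $\tilde\nabla^{1,0}$ to both sides and use that $\nabla^{1,0}$ commutes with the (parallel) contraction operator $\omega^{\cdot\bar k}\partial_{\bar y^k}$, together with the combinatorial bookkeeping of how $(\delta^{1,0})^{-1}$ redistributes the normalizing factor $1/(p_1+p_2)$ — this is what produces the linear-in-$n$ coefficient $(n+1)$ rather than a constant.

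The main obstacle I anticipate is the bookkeeping of the several curvature correction terms in the Fedosov equation: the recursion $I_{n+1}=(\delta^{1,0})^{-1}(\nabla^{1,0}I_n + \text{quadratic terms in } I\star I + \ldots)$ is not as clean as the one for $J_\alpha$, and one must check that, after contracting the $\bar y$, all the extra pieces either vanish (by symmetry of the curvature tensor, or because they carry too many $\bar y$'s) or reassemble into the $\tilde\nabla^{1,0}$-recursion for $(J_\alpha)_n$. In effect one is verifying that the purely-$\overline{\mathcal W}_X$-free part of the Fedosov equation, after the trace, reduces exactly to the defining equation of $J_\alpha$ for this particular $\alpha$ — which is ultimately a restatement of why this $\alpha$ gives the Berezin-Toeplitz Karabegov form (cf. Theorem \ref{theorem: Fedosov-connection} and the computation in Proposition \ref{lemma: compatibility-Fedosov-connections} where $R_\nabla\circledast_k s$ produced precisely the $\alpha\circledast_k s$ correction). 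I would isolate that reduction as the technical heart of the argument and treat the rest as the straightforward induction described above.
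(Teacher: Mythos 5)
Your strategy is essentially the paper's proof: induction on $n$, with the base case identifying $(J_\alpha)_1$ with the Ricci trace of the curvature and the inductive step obtained by applying $\tilde{\nabla}^{1,0}$ and commuting $\nabla^{1,0}$ with the $\omega$-contraction (trace), which together with the $(\delta^{1,0})^{-1}$ normalization produces the coefficient $(n+1)$. The obstacle you anticipate does not arise: the paper's induction runs entirely on the recursion $(J_\alpha)_n=(\delta^{1,0})^{-1}\nabla^{1,0}(J_\alpha)_{n-1}$ defining $J_\alpha$, while the clean recursion for the tensors $R^j_{i_1\cdots i_n,\bar{l}}$ (no quadratic $I\star I$ corrections) is already built into the construction of $I$ in \cite{CLL} as Kapranov's symmetrized covariant derivatives of the curvature, so the Fedosov equation never has to be re-examined.
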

\begin{proof}
The proof is by induction on $n$.
For $n=1$, we have
$$
(J_\alpha)_1=(\delta^{1,0})^{-1}\left(-\hbar\cdot R_{i\bar{j}k}^kdz^i\wedge d\bar{z}^j\right)=-2\hbar\cdot\left(\frac{1}{2} R_{i\bar{j}k}^kd\bar{z}^j\otimes y^i\right).
$$
Then, by the induction hypothesis for $n-1$, we have
\begin{align*}
 \nabla^{1,0}(J_\alpha)_{n-1}=\nabla^{1,0}\left(-n\hbar\cdot R_{ii_1\cdots i_{n-1},\bar{l}}^id\bar{z}^l\otimes y^{i_1}\cdots y^{i_{n-1}}\right).
\end{align*}
On the other hand, 
\begin{align*}
 &\nabla^{1,0}\left(n\hbar\cdot R_{i_1\cdots i_{n},\bar{l}}^jd\bar{z}^l\otimes y^{i_1}\cdots y^{i_{n}}\otimes\partial_{y^j}\right)\\
 = & (n+1)\cdot n\hbar\cdot R_{i_1\cdots i_{n+1},\bar{l}}^jdz^{i_{n+1}}\wedge d\bar{z}^l\otimes y^{i_1}\cdots y^{i_n}\otimes\partial_{y^j}.
\end{align*}
Since $\nabla^{1,0}$ is compatible with the contraction between $TX$ and $T^*X$, the above computation shows that
$$
(J_\alpha)_n=(\delta^{1,0})^{-1}(\nabla^{1,0}(J_\alpha)_{n-1})=-(n+1)\hbar\cdot R_{i_1\cdots i_{n+1},\bar{l}}^{i_1} d\bar{z}^l\otimes y^{i_1}\cdots y^{i_n}y^{i_{n+1}}.
$$
\end{proof}

\begin{lem}
 The section $\beta$ satisfies 
 $
 D_{\alpha,k}(\beta)=-\omega_{i\bar{j}}d\bar{z}^j\otimes y^i-\partial\rho.
 $
\end{lem}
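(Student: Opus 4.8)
The plan is to split $D_{\alpha,k}(\beta)$ into its $(1,0)$- and $(0,1)$-components and identify each separately, the main device being the fiberwise completion $\tilde\beta:=\rho+\beta=\sum_{n\geq 0}(\tilde\nabla^{1,0})^n(\rho)$. By the construction in Lemma~\ref{lemma: flat-section-1-0-comonent} (whose proof only uses that $\delta^{1,0}(\nabla^{1,0}\rho)=0$, which holds since $\rho$ carries no $y^i$), $\tilde\beta$ is the unique section with $(\nabla^{1,0}-\delta^{1,0})(\tilde\beta)=0$ and $\pi_{0,*}(\tilde\beta)=\rho$; more generally, the argument in the proof of Lemma~\ref{lemm: flat-section-1-0-components} shows that any section of $\A_X^\bullet(\W_{X,\C})$ killed by $D_{\alpha,k}^{1,0}=\nabla^{1,0}-\delta^{1,0}$ with vanishing $\pi_{0,*}$ must be zero.

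Since $D_{\alpha,k}^{1,0}(\tilde\beta)=0$ and $\delta^{1,0}\rho=0$, the $(1,0)$-part is immediate: $D_{\alpha,k}^{1,0}(\beta)=-(\nabla^{1,0}-\delta^{1,0})(\rho)=-\partial\rho$. For the $(0,1)$-part I would show that $D_{\alpha,k}^{0,1}(\beta)+\omega_{i\bar j}\,d\bar z^j\otimes y^i$ is $D_{\alpha,k}^{1,0}$-closed with vanishing $\pi_{0,*}$, and is therefore zero by the uniqueness above. The second property is clear: since $\delta^{0,1}\beta=0$ we have $D_{\alpha,k}^{0,1}(\beta)=\nabla^{0,1}\beta+k[I_{\alpha,k},\beta]_{\star_k}$, the first summand has $y$-degree $\geq 1$, and since $I$ is uniformly of $\bar y$-degree $1$ the term $k[I_{\alpha,k},\beta]_{\star_k}=-\omega^{i\bar j}(\partial_{y^i}\beta)(\partial_{\bar y^j}I_k)$ has $y$-degree $\geq 2$, while $\omega_{i\bar j}\,d\bar z^j\otimes y^i$ has $y$-degree exactly $1$. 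For the first property, $D_{\alpha,k}^2=0$ gives $D_{\alpha,k}^{1,0}D_{\alpha,k}^{0,1}=-D_{\alpha,k}^{0,1}D_{\alpha,k}^{1,0}$, so
$$
D_{\alpha,k}^{1,0}\bigl(D_{\alpha,k}^{0,1}\beta\bigr)=-D_{\alpha,k}^{0,1}\bigl(D_{\alpha,k}^{1,0}\beta\bigr)=D_{\alpha,k}^{0,1}(\partial\rho)=\bar\partial\partial\rho=\omega,
$$
where I used that the Wick bracket of $I_{\alpha,k}$ with the $y$- and $\bar y$-free form $\partial\rho$ vanishes and $\delta^{0,1}(\partial\rho)=0$; on the other hand $\omega_{i\bar j}\,d\bar z^j\otimes y^i$ is $\nabla$-parallel (it is the Kähler metric written in $T^*X_\C\otimes\W_X$), so $\nabla^{1,0}$ kills it and $\delta^{1,0}(\omega_{i\bar j}\,d\bar z^j\otimes y^i)=\omega$, giving $D_{\alpha,k}^{1,0}(\omega_{i\bar j}\,d\bar z^j\otimes y^i)=-\omega$. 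Adding the two yields $0$, hence $D_{\alpha,k}^{0,1}(\beta)=-\omega_{i\bar j}\,d\bar z^j\otimes y^i$; combined with the $(1,0)$-part this proves the lemma.

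The genuine inputs are only the uniqueness of $D_{\alpha,k}^{1,0}$-flat lifts and the parallelism of $\omega_{i\bar j}\,d\bar z^j\otimes y^i$; everything else is bookkeeping, and the argument uses nothing about the Berezin--Toeplitz choice of $\alpha$ since $\beta$ depends only on $\omega$. The one point that demands care is the sign convention for the potential: here $\rho$ is normalized by $\bar\partial\partial\rho=\omega$ (so $\partial\bar\partial\rho=-\omega$ and $\omega_{i\bar j}=-\partial_{z^i}\partial_{\bar z^j}\rho$), and it is precisely this sign that makes the two $\pm\omega$ contributions in the $(0,1)$-computation cancel and that fixes the sign of $\omega_{i\bar j}\,d\bar z^j\otimes y^i$ in the answer. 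I expect this sign tracking, rather than any conceptual difficulty, to be the main thing to get right.
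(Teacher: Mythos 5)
Your argument is correct, and it reaches the identity by a mild reorganization of the paper's own proof; both routes rest on the same ingredients, namely $\bar\partial\partial\rho=\omega$, the $\nabla$-parallelism of $\omega_{i\bar j}d\bar z^j\otimes y^i$ together with $\delta^{1,0}(\omega_{i\bar j}d\bar z^j\otimes y^i)=\omega$, vanishing of Wick brackets against scalar-valued forms, and a lowest-$y$-degree rigidity argument. The paper first records $\sigma(D_{\alpha,k}\beta)=-\partial\rho$, verifies by one direct computation that the candidate $-\omega_{i\bar j}d\bar z^j\otimes y^i-\partial\rho$ is closed under the full connection $D_{\alpha,k}$, and then kills the difference, which lies in $\A_X^{0,1}(\W_X)$ with vanishing symbol, by the $\delta^{1,0}$ argument. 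You instead determine $D^{1,0}_{\alpha,k}\beta=-\partial\rho$ exactly from the $(\nabla^{1,0}-\delta^{1,0})$-flat completion $\tilde\beta=\rho+\beta$ of Lemma \ref{lemma: flat-section-1-0-comonent}, and pin down the $(0,1)$-part by showing the discrepancy is $D^{1,0}_{\alpha,k}$-closed (via the $(1,1)$-component of $D_{\alpha,k}^2\beta=0$) and has vanishing $\pi_{0,*}$. This buys a slightly cleaner bookkeeping (you never verify $D^{0,1}$-closedness of the candidate, and it is transparent that nothing depends on the Berezin--Toeplitz choice of $\alpha$), while the inputs are the same as the paper's. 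Your sign remark is also on target: the appendix normalizes $\bar\partial\partial\rho=\omega$, which is what makes the two $\pm\omega$ contributions cancel.

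One auxiliary claim must be restricted before the proof is airtight: it is not true that every section of $\A_X^\bullet(\W_{X,\C})$ killed by $\nabla^{1,0}-\delta^{1,0}$ and by $\pi_{0,*}$ vanishes; $\partial\rho$ itself is a counterexample, since it contains no $y^i$'s (so $\delta^{1,0}$ and $\pi_{0,*}$ both annihilate it) while $\nabla^{1,0}\partial\rho=\partial\partial\rho=0$. The lowest-degree argument of Lemma \ref{lemm: flat-section-1-0-components} needs the section to have no $dz^i$-components, so that $(\delta^{1,0})^{-1}$ annihilates it and identity \eqref{equation: delta-1-0-and-inverse} makes $\delta^{1,0}$ injective on all components of positive $y$-degree. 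The element you apply it to, $D^{0,1}_{\alpha,k}(\beta)+\omega_{i\bar j}d\bar z^j\otimes y^i$, is a $(0,1)$-form all of whose terms have $y$-degree at least $1$ by your degree count, so the argument does apply and your proof goes through once the uniqueness statement is phrased for such sections. (A smaller inaccuracy: Lemma \ref{lemma: flat-section-1-0-comonent} applies to $\rho$ directly because $\rho$ is a smooth function, but its proof uses $\delta^{1,0}\nabla^{1,0}=0$ on every iterate $(\tilde{\nabla}^{1,0})^m(\rho)$, not only on $\rho$; since you are quoting the lemma rather than reproving it, this does not affect your argument.)
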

\begin{proof}
The function $\rho$ satisfies the condition that $\bar{\partial}\partial(\rho)=\omega$. Recall that $\beta=\sum_{k\geq 1}(\tilde{\nabla}^{1,0})^k(\rho)$, and it is easy to check that $\sigma\left(D_{\alpha,k}(\beta)\right)=\sigma(-\delta(\tilde{\nabla}^{1,0}(\rho))=-\partial\rho$. On the other hand, the following computation shows that $-\omega_{i\bar{j}}d\bar{z}^j\otimes y^i-\partial\rho$ is closed under $D_{\alpha,k}$: 
\begin{align*}
&D_{\alpha,k}(-\omega_{i\bar{j}}d\bar{z}^j\otimes y^i-\partial\rho)\\
=&\nabla(-\omega_{i\bar{j}}d\bar{z}^j\otimes y^i)-\delta(-\omega_{i\bar{j}}d\bar{z}^j\otimes y^i)+k\cdot[I_\alpha,-2\omega_{i\bar{j}}d\bar{z}^j\otimes y^i]_{\star_k}-\bar{\partial}\partial\rho\\
=&\delta(\omega_{i\bar{j}}d\bar{z}^j\otimes y^i)-\bar{\partial}\partial\rho
=\omega_{i\bar{j}}dz^i\wedge d\bar{z}^j-\omega
=0.
\end{align*}
Here we have used the fact that $\omega$ is parallel with respect to $\omega$. Since $\beta$ is a section of the holomorphic Weyl bundle $\W_X$, so is its differential $D_{\alpha,k}(\beta)\in A^1_X(\W_X)$. Furthermore,  we have $D_{\alpha,k}^{1,0}(\beta)=-\rho$, which implies that 
$$
\gamma:=D_\alpha(\beta)+\omega_{i\bar{j}}d\bar{z}^j\otimes y^i+\partial\rho\in\A_X^{0,1}(\W_X).
$$
Suppose $\gamma$ does not vanish. Then $\delta(\gamma)\not=0$ which implies the non-vanishing of $D_{\alpha,k}(\gamma)$. This is a contradiction. 
\end{proof}

\begin{lem}
We have $(I+J_\alpha)\circledast_k(e^{k\cdot\beta})=\left(\sum_{n\geq 2}\tilde{R}_n^*(k\cdot\beta)\right)\circledast_k e^{k\cdot\beta}=k[I,k\cdot\beta]_\star\circledast_k e^{k\cdot\beta}$.
\end{lem}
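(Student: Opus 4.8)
The plan is to compute the left-hand side termwise, exploiting that $e^{k\beta}$ lies in the holomorphic Weyl bundle $\W_X$ (because $\beta=\sum_{m\geq 1}(\tilde\nabla^{1,0})^m(\rho)$ does), so that every fiberwise Bargmann--Fock action that occurs uses at most one contraction. Write $I=\sum_{n\geq 2}I_n$ and $J_\alpha=\sum_{m\geq 1}(J_\alpha)_m$ as in the discussion after Theorem~\ref{theorem: Fedosov-connection}, with $I_n=R^j_{i_1\cdots i_n,\bar l}\,\omega_{j\bar c}\,d\bar z^l\otimes y^{i_1}\cdots y^{i_n}\bar y^c$.

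First I would record two facts about the fiberwise action on a holomorphic section $s\in\W_X$. Since $(J_\alpha)_m$ is purely holomorphic, it acts by the symmetric product, $(J_\alpha)_m\circledast_k s=(J_\alpha)_m\cdot s$. Since $I_n$ carries exactly one anti-holomorphic generator, $I_n\circledast_k s=-\tfrac1k\,R^j_{i_1\cdots i_n,\bar l}\,\omega_{j\bar c}\,\omega^{p\bar c}\,d\bar z^l\otimes \partial_{y^p}\!\big(y^{i_1}\cdots y^{i_n}\cdot s\big)$, and the Leibniz rule splits this into a \emph{transport part}, where $\partial_{y^p}$ hits $s$, and a \emph{trace part}, where it hits the monomial $y^{i_1}\cdots y^{i_n}$. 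A short computation of the Wick bracket, using that $I$ has a single $\bar y$, gives $[I_n,s]_{\star_k}=-\tfrac1k\,\omega^{p\bar c}(\partial_{y^p}s)\,\partial_{\bar y^c}I_n$ for holomorphic $s$, which is precisely the transport part.

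Next I would evaluate the trace part: using the symmetry of $R^j_{i_1\cdots i_n,\bar l}$ in $i_1,\dots,i_n$ and $\omega^{p\bar c}\omega_{j\bar c}=\delta^p_j$, it collapses to $-\tfrac{n}{k}\,R^q_{q i_1\cdots i_{n-1},\bar l}\,d\bar z^l\otimes y^{i_1}\cdots y^{i_{n-1}}\cdot s$, and by Lemma~\ref{lemma: trace-I-Ricci-form} this is $-\,(J_\alpha)_{n-1}\cdot s$. Hence the trace part of $I_n\circledast_k s$ cancels $(J_\alpha)_{n-1}\circledast_k s$, and summing over $n\geq 2$ gives, for every holomorphic $s$,
\[
(I+J_\alpha)\circledast_k s=\sum_{n\geq 2}[I_n,s]_{\star_k}=[I,s]_{\star_k}.
\]
Checking that the sign conventions behind $\alpha$ and Lemma~\ref{lemma: trace-I-Ricci-form} make this a cancellation rather than a reinforcement is the step that needs the most care; the rest is bookkeeping.

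Finally I would put $s=e^{k\beta}$. Because $I$ has a single $\bar y$, the operator $\gamma\mapsto[I,\gamma]_{\star_k}$ is a derivation of the symmetric product on $\W_X$, so it obeys the chain rule $[I,e^{k\beta}]_{\star_k}=k\,[I,\beta]_{\star_k}\cdot e^{k\beta}=[I,k\beta]_{\star_k}\cdot e^{k\beta}$; termwise, $[I_n,e^{k\beta}]_{\star_k}=[I_n,k\beta]_{\star_k}\circledast_k e^{k\beta}$, the last equality because $[I_n,k\beta]_{\star_k}$ is again purely holomorphic. Setting $\tilde R_n^*(k\beta):=[I_n,k\beta]_{\star_k}$ — the holomorphic section obtained by contracting the single $\bar y$ of $I_n$ against $k\,\partial_y\beta$ — then delivers the claimed chain of identities at once.
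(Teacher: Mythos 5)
Your proposal is correct and takes essentially the same route as the paper's own proof: a termwise Leibniz split of $I_n\circledast_k$ acting on the holomorphic section $e^{k\beta}$ into a transport part (the Wick bracket, i.e.\ $\tilde{R}_n^*(k\beta)=[I_n,k\beta]_{\star_k}$) and a trace part that Lemma \ref{lemma: trace-I-Ricci-form} identifies with $-(J_\alpha)_{n-1}$, so that summing over $n$ cancels the $J_\alpha\circledast_k$ contribution, with the derivation/chain-rule property handling the exponential. The only divergence is an overall factor of $k$ against the lemma's stated right-hand side ($[I,k\beta]_{\star_k}$ versus $k[I,k\beta]_{\star}$), which traces to the paper's own fluctuating normalizations (its proof computes $k\cdot I_n\circledast_k$ while the statement concerns $(I+J_\alpha)\circledast_k$), so your bookkeeping is internally consistent and matches how the lemma is actually used afterwards.
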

\begin{proof}
For every $n\geq 2$, there is the following straightforward computation:
\begin{align*}
 &k\cdot I_n\circledast_k (e^{k\cdot\beta}\otimes e_{L^{k}})\\
 =&-2\sqrt{-1}\cdot R_{i_1\cdots i_n,\bar{l}}^j\omega_{j\bar{k}}d\bar{z}^l\otimes(y^{i_1}\cdots y^{i_n}\bar{y}^k)\circledast_k(e^{k\cdot\beta}\otimes e_{L^{k}})\\
 =&-2\sqrt{-1}\cdot R_{i_1\cdots i_n,\bar{l}}^j\omega_{j\bar{k}}d\bar{z}^l\otimes(\frac{\omega^{i\bar{k}}}{2\sqrt{-1}}\frac{\partial}{\partial y^i})(y^{i_1}\cdots y^{i_n}e^{k\cdot\beta}\otimes e_{L^{k}})\\
 =&R_{i_1\cdots i_n,\bar{l}}^id\bar{z}^l\otimes y^1\cdots y^n\frac{\partial(k\cdot\beta))}{\partial y^i}\cdot(e^{k\cdot\beta}\otimes e_{L^{k}})+n\cdot R_{ii_1\cdots i_{n-1},\bar{l}}^id\bar{z}^l\otimes y^{i_1}\cdots y^{i_{n-1}}\cdot(e^{k\cdot\beta}\otimes e_{L^{k}})\\
 =&\left(\tilde{R}_n^*(k\cdot\beta)+n\cdot R_{ii_1\cdots i_{n-1},\bar{l}}^id\bar{z}^l\otimes y^{i_1}\cdots y^{i_{n-1}}\right)\cdot(e^{k\cdot\beta}\otimes e_{L^{k}})\\
 =&\left(\tilde{R}_n^*(k\cdot\beta)-(J_\alpha)_{n-1}\right)\circledast_k(e^{k\cdot\beta}\otimes e_{L^{k}}).
\end{align*}
\end{proof}
Summarizing the above computations, we have
\begin{align*}
 &D_{\alpha,k}(e^{k\cdot\beta}\otimes e_{L^{k}})\\
 =&\left(\nabla+k\cdot\gamma_\alpha\circledast_k\right)(e^{k\cdot\beta}\otimes e_{L^{ k}})+e^{k\cdot\beta}\otimes \nabla_{L^{\otimes k}}(e_{L^{k}})\\
 =&\left(\nabla(k\cdot\beta)+k\cdot\omega_{i\bar{j}}(d\bar{z}^j\otimes y^i-dz^i\otimes \bar{y}^j)\circledast_k+k(I+J_\alpha)\circledast_k\right)(e^{k\cdot\beta}\otimes e_{L^{k}})+e^{k\cdot\beta}\otimes (k\partial{\rho}\cdot e_{L^{k}})\\
 =&\left(\nabla(k\cdot\beta)+k\cdot\omega_{i\bar{j}}d\bar{z}^j\otimes y^i+k\cdot\partial\rho\right)(e^{k\cdot\beta}\otimes e_{L^{k}})\\
 &+k(-\omega_{i\bar{j}}dz^i\otimes\bar{y}^j+I+J_\alpha)\circledast_k(e^{k\cdot\beta}\otimes e_{L^{k}})\\
 =&\left(\nabla(k\cdot\beta)+k\cdot\omega_{i\bar{j}}d\bar{z}^j\otimes y^i+k\cdot\partial\rho+(-\omega_{i\bar{j}})dz^i(-\omega^{k\bar{j}})\frac{\partial(k\cdot\beta)}{\partial y^k}+k[I,k\cdot\beta]_{\star_k}\right)\\
 &\ \circledast_k(e^{k\cdot\beta}\otimes e_{L^{k}})\\
  =&\left(\nabla(k\cdot\beta)+k[I,k\cdot\beta]_{\star_k}+k\cdot\omega_{i\bar{j}}d\bar{z}^j\otimes y^i+k\cdot\partial\rho-\delta^{1,0}(k\cdot\beta)\right)\circledast_k(e^{k\cdot\beta}\otimes e_{L^{ k}})\\
  =&k\cdot\left(D_{\alpha,k}(\beta)+\omega_{i\bar{j}}d\bar{z}^j\otimes y^i+\partial\rho\right)\circledast_k(e^{k\cdot\beta}\otimes e_{L^{ k}})\\
  =&0.
\end{align*}
This completes the proof of Proposition \ref{lemma: flat-section-Bargmann-Fock}.

\begin{bibdiv}
\begin{biblist}
	

\bib{Witten}{article}{
	AUTHOR = {Axelrod, S.},
	author = {Della Pietra, S.},
	author = {Witten, E.},
	TITLE = {Geometric quantization of {C}hern-{S}imons gauge theory},
	JOURNAL = {J. Differential Geom.},
	VOLUME = {33},
	YEAR = {1991},
	NUMBER = {3},
	PAGES = {787--902},
}

\bib{Bischoff-Gualtieri}{article}{
	AUTHOR = {Bischoff, F.},
	AUTHOR = {Gualtieri, M.},
	TITLE = {Brane quantization of toric {P}oisson varieties},
	JOURNAL = {Comm. Math. Phys.},
	FJOURNAL = {Communications in Mathematical Physics},
	VOLUME = {391},
	YEAR = {2022},
	NUMBER = {2},
	PAGES = {357--400},
}

\bib{Bordemann-Meinrenken}{article}{
	AUTHOR = {Bordemann, M.},
	author = {Meinrenken, E.},
	author = {Schlichenmaier, M.},
	TITLE = {Toeplitz quantization of {K}\"{a}hler manifolds and {${\rm
				gl}(N)$}, {$N\to\infty$} limits},
	JOURNAL = {Comm. Math. Phys.},
	VOLUME = {165},
	YEAR = {1994},
	NUMBER = {2},
	PAGES = {281--296},
}

\bib{Bordemann}{article}{
    AUTHOR = {Bordemann, M.},
    author = {Waldmann, S.},
     TITLE = {A {F}edosov star product of the {W}ick type for {K}\"{a}hler
              manifolds},
   JOURNAL = {Lett. Math. Phys.},
    VOLUME = {41},
      YEAR = {1997},
    NUMBER = {3},
     PAGES = {243--253},
}

\bib{Bott}{article}{
	AUTHOR = {Bott, R.},
	TITLE = {Homogeneous vector bundles},
	JOURNAL = {Ann. of Math. (2)},
	VOLUME = {66},
	YEAR = {1957},
	PAGES = {203--248},
}

\bib{CLL3}{article}{
	author={Chan, K.},
	author={Leung, N. C.},
	author={Li, Q.},
	TITLE = {Bargmann-{F}ock sheaves on {K}\"{a}hler manifolds},
	JOURNAL = {Comm. Math. Phys.},
	VOLUME = {388},
	YEAR = {2021},
	NUMBER = {3},
	PAGES = {1297--1322},
}

\bib{CLL}{article}{
   author={Chan, K.},
   author={Leung, N. C.},
   author={Li, Q.},
 TITLE = {Kapranov's {$L_\infty$} structures, {F}edosov's star products,
	and one-loop exact {BV} quantizations on {K}\"{a}hler manifolds},
JOURNAL = {Commun. Number Theory Phys.},
VOLUME = {16},
YEAR = {2022},
NUMBER = {2},
PAGES = {299--351},
}

\bib{EF}{article}{
	author={Engeli, M.},
	author={Felder, G.},
	TITLE={A Riemann-Roch-Hirzebruch formula for traces of differential operators},
	JOURNAL={Ann. Scient. Éc. Norm. Sup.},
	VOLUME={41},
	YEAR={2008},
	NUMBER={4},
	PAGES={623--655},
}	
	
\bib{Fed}{article}{
    AUTHOR = {Fedosov, B. V.},
     TITLE = {A simple geometrical construction of deformation quantization},
   JOURNAL = {J. Differential Geom.},
    VOLUME = {40},
      YEAR = {1994},
    NUMBER = {2},
     PAGES = {213--238}
}

\bib{Ginzburg}{article}{
	AUTHOR = {Ginzburg, V.},
	TITLE = {Lectures on {$\mathcal{D}$}-modules},
	NOTE = {Available online},
	YEAR = {1998 Chicago notes},
}

\bib{GLL}{article}{
	AUTHOR = {Grady, R.},
	author = {Li, Q.},
	author = {Li, S.},
	TITLE = {Batalin-{V}ilkovisky quantization and the algebraic index},
	JOURNAL = {Adv. Math.},
	VOLUME = {317},
	YEAR = {2017},
	PAGES = {575--639},
}

\bib{GW}{article}{
    AUTHOR = {Gukov, S.},
    author = {Witten, E.},
     TITLE = {Branes and quantization},
   JOURNAL = {Adv. Theor. Math. Phys.},
    VOLUME = {13},
      YEAR = {2009},
    NUMBER = {5},
     PAGES = {1445--1518},
}

\bib{Kapranov}{article}{
	AUTHOR = {Kapranov, M.},
	TITLE = {Rozansky-{W}itten invariants via {A}tiyah classes},
	JOURNAL = {Compositio Math.},
	VOLUME = {115},
	YEAR = {1999},
	NUMBER = {1},
	PAGES = {71--113},
}

\bib{Karabegov96}{article}{
    AUTHOR = {Karabegov, A.V.},
     TITLE = {Deformation quantizations with separation of variables on a
              {K}\"{a}hler manifold},
   JOURNAL = {Comm. Math. Phys.},
    VOLUME = {180},
      YEAR = {1996},
    NUMBER = {3},
     PAGES = {745--755},
}

\bib{Karabegov00}{incollection}{
    AUTHOR = {Karabegov, A.V.},
     TITLE = {On {F}edosov's approach to deformation quantization with
              separation of variables},
 BOOKTITLE = {Conf\'{e}rence {M}osh\'{e} {F}lato 1999, {V}ol. {II} ({D}ijon)},
    SERIES = {Math. Phys. Stud.},
    VOLUME = {22},
     PAGES = {167--176},
 PUBLISHER = {Kluwer Acad. Publ., Dordrecht},
      YEAR = {2000},
}

\bib{Karabegov07}{article}{
    AUTHOR = {Karabegov, A.V.},
     TITLE = {A formal model of {B}erezin-{T}oeplitz quantization},
   JOURNAL = {Comm. Math. Phys.},
    VOLUME = {274},
      YEAR = {2007},
    NUMBER = {3},
     PAGES = {659--689},
}

\bib{Karabegov}{article}{
    AUTHOR = {Karabegov, A.V.},
    author = {Schlichenmaier, M.},
     TITLE = {Identification of {B}erezin-{T}oeplitz deformation
              quantization},
   JOURNAL = {J. Reine Angew. Math.},
    VOLUME = {540},
      YEAR = {2001},
     PAGES = {49--76},
}

\bib{Lerman}{incollection}{
	AUTHOR = {Lerman, E.},
	TITLE = {Geometric quantization; a crash course},
	BOOKTITLE = {Mathematical aspects of quantization},
	SERIES = {Contemp. Math.},
	VOLUME = {583},
	PAGES = {147--174},
	PUBLISHER = {Amer. Math. Soc., Providence, RI},
	YEAR = {2012},
}

\bib{Ma-Ma-1}{article}{
    AUTHOR = {Ma, X.},
    author = {Marinescu, G.},
     TITLE = {Toeplitz operators on symplectic manifolds},
   JOURNAL = {J. Geom. Anal.},
    VOLUME = {18},
      YEAR = {2008},
    NUMBER = {2},
     PAGES = {565--611},
}

\bib{Ma-Ma}{article}{
    AUTHOR = {Ma, X.},
    author = {Marinescu, G.},
     TITLE = {Berezin-{T}oeplitz quantization on {K}\"{a}hler manifolds},
   JOURNAL = {J. Reine Angew. Math.},
    VOLUME = {662},
      YEAR = {2012},
     PAGES = {1--56},

}

\bib{Neumaier}{article}{
    AUTHOR = {Neumaier, N.},
     TITLE = {Universality of {F}edosov's construction for star products of
              {W}ick type on pseudo-{K}\"{a}hler manifolds},
   JOURNAL = {Rep. Math. Phys.},
    VOLUME = {52},
      YEAR = {2003},
    NUMBER = {1},
     PAGES = {43--80},
}

\bib{Xu}{article}{
	AUTHOR = {Xu, P.},
	TITLE = {Fedosov {$*$}-products and quantum momentum maps},
	JOURNAL = {Comm. Math. Phys.},
	VOLUME = {197},
	YEAR = {1998},
	NUMBER = {1},
	PAGES = {167--197},
}

\end{biblist}
\end{bibdiv}

\end{document}